\newtheorem{Prop}{Proposition}[section]
\newtheorem{Thm}[Prop]{Theorem}
\newtheorem{Lemma}[Prop]{Lemma}
\newtheorem{Cor}[Prop]{Corollary}
\newtheorem{Definition}[Prop]{Definition}
\def\az{\alpha}      \def\ud{{\underline{d}}}
\def\lz{\lambda}
\def\bbn{{\mathbb N}}    \def\bbq{{\mathbb Q}} \def\bb1{{\mathbb 1}}
\def\mv{{\mathcal V}}
   \def\bbe{{\mathbb E}} 
  \def\bbc{{\mathbb C}}
\def\ra{\rightarrow}
\def\hom{\mbox{Hom}}
\def\rad{\mbox{rad}\,}
\def\ext{\mbox{Ext}\,} 
\def\dim{\mbox{dim}\,}
\def\udim{\mbox{\underline {dim}}}
\def\mod{\mbox{mod}\,}  
\def\Im{\mbox{Im}\,}    \def\aut{\mbox{Aut}\,}
\def\ker{\mbox{Ker}\,}
\def\cok{\mbox{Coker}\,}
\def\lr#1{\langle #1\rangle}
\def\uq2{U_q(\hat{sl}_2)}
\def\bb{{\bf b}}
\def\nd{{\noindent}}
\def\md{{\mathcal{D}}}
\def\mp{{\mathcal{P}}}
\def\mo{{\mathcal{O}}}
\def\ue{{\underline{e}}}
\begin{document}

\title[Green's formula with $\bbc^{*}$-action and Caldero-Keller's formula]{Green's formula with $\bbc^{*}$-action and Caldero-Keller's formula for cluster algebras}

\thanks{ The research was
supported in part by NSF of China (No. 10631010) and by NKBRPC (No. 2006CB805905) \\
2000 Mathematics Subject Classification: 14M99,16G20, 16G70,
17B35. \\ Key words and phrases: Green's formula,  cluster
algebra, $\bbc^*$-action. }

\author{Jie Xiao and Fan Xu}
\address{Department of Mathematical Sciences\\
Tsinghua University\\
Beijing 10084, P.~R.~China} \email{jxiao@math.tsinghua.edu.cn
(J.Xiao),\  f-xu04@mails.tsinghua.edu.cn (F.Xu)}

\maketitle


\bigskip

\begin{abstract}
 It is known that Green's formula over finite fields gives rise to the comultiplications of
 Ringel-Hall algebras and quantum groups (see \cite{Green}, see
 also
\cite{Lusztig}). In this paper, we prove a projective version
 of  Green's formula in a geometric way. Then following the method of Hubery in \cite{Hubery2005},
 we apply this formula to proving  Caldero-Keller's multiplication formula for acyclic cluster
 algebras of arbitrary type.
\end{abstract}

\section{Introduction}

\subsection{}
Green in \cite{Green} found a homological counting formula for
hereditary abelian categories over finite fields. It leads to the
comultiplication formula for Ringel-Hall algebras, and as a
generalization of the result of Ringel in \cite{Ringel}, it gives
a realization of the positive part of the quantized enveloping
algebra for arbitrary type symmetrizable Kac-Moody algebra. In
\cite{DXX}, we gave Green's formula over the complex numbers
$\bbc$ via Euler characteristic and applied it to realizing
comultiplication of the universal enveloping algebra. However, one
should notice that many nonzero terms in the original formula
vanish when we consider it over the complex numbers $\bbc.$ In the
following, we show that the geometric correspondence in the proof
of Green's formula admits a canonical $\bbc^*$-action. Then we
obtain a new formula, which can be regarded as the projective
version of Green's formula.

Our motivation comes from cluster algebras. Cluster algebras were
introduced by Fomin and Zelevinsky \cite{FZ2002}. In \cite{BMRRT},
the authors categorified a lot of cluster algebras by defining and
studying the cluster categories related to clusters and seeds.
Under the framework of cluster categories, Caldero and Keller
realized the acyclic cluster algebras of simply-laced finite type
by proving a cluster multiplication theorem
\cite{CK2005}. At the same time, Hubery researched on realizing
acyclic cluster
algebras (including non simply-laced case) via Ringel-Hall
algebras for valued graphs over finite fields \cite{Hubery2005}.
He counted the corresponding Hall numbers and then deduced the
Caldero-Keller multiplication when evaluating at $q=1$ where $q$
is the order of the finite field.  It seems that his method only
works for the case of tame hereditary algebras \cite{Hubery2007},
due to the difficulty of the existence of Hall polynomials. In
this paper, we realize that the whole thing is independent of that
over finite fields. By counting the Euler characteristics of the
corresponding varieties and constructible sets with {\it
pushforward} functors and geometric quotients, we show that  the
projective version of Green's theorem and the ``higher order''
associativity of Hall multiplication
imply that Caldero-Keller's multiplication formula holds for
acyclic cluster algebras of arbitrary type. We remark here
that, for the elements in the dual semicanonical basis which are
given by certain constructible functions on varieties of nilpotent
modules over a preprojective algebra of arbitrary type, a similar
multiplication formula has been obtained in \cite{GLS}.

\subsection{}
The paper is organized as follows.  In Section \ref{basic}, we
recall the general theory of algebraic geometry needed in this
paper. This is followed in Section \ref{greenfinite} by a short
survey of Green's formula over finite fields without proof. In
particular, we consider many variants of Green's formula under
various group actions. These variants can be viewed as  the
counterparts over finite field of the projective version of
Green's formula. We give the main result in Section \ref{complex}.
Two geometric versions of Green's formula are proved. As an
application, in Section \ref{application} we prove Caldero-Keller
multiplication formula following Hubery's method \cite
{Hubery2005}, and also we give an example using the Kronecker
quiver.

\bigskip
\par\noindent {\bf Acknowledgments.}
The main idea of this paper comes from a sequence of discussion
 with Dr. Dong.Yang on the works \cite{CK2005} and \cite{Hubery2005}. We are grateful
 to Dr. D.Yang for his great help.

\section{Preliminaries}\label{basic}

\subsection{}
Let $Q=(Q_0,Q_1,s,t)$ be a quiver, where $Q_0$, also denoted by
$I$, and $Q_1$ are the
sets of vertices and arrows, respectively, and $s,t: Q_1\rightarrow
Q_0$ are
 maps such that any arrow $\az$ starts at $s(\az)$ and terminates at $t(\az).$ For any dimension vector
 $\ud=\sum_i a_i i\in\bbn I,$ we consider the affine space over $\bbc$
$$\bbe_{\ud}(Q)=\bigoplus_{\az\in Q_1}\hom_{\bbc}(\bbc^{a_{s(\az)}},\bbc^{a_{t(\az)}})$$
Any element $x=(x_{\az})_{\az\in Q_1}$ in $\bbe_{\ud}(Q)$ defines
a representation $M(x)$ with $\udim M(x)=\ud$ in a natural way.
For any $\alpha\in Q_1,$ we denote the vector space at $s(\alpha)$
(resp.$t(\alpha)$) of the representation $M$  by $M_{s(\alpha)}$
(resp.$M_{t(\alpha)}$) and  the linear map from $M_{s(\alpha)}$ to
$M_{t(\alpha)}$ by $M_{\alpha}.$  A relation in $Q$ is a linear
combination $\sum_{i=1}^{r}\lz_{i}p_i,$ where $\lz_i\in\bbc$ and
$p_i$ are paths of length at least two with $s(p_i)=s(p_j)$ and
$t(p_i)=t(p_j)$ for all $1\leq i,j\leq r.$ For any
$x=(x_{\az})_{\az\in Q_1}\in\bbe_{\ud}$ and any path
$p=\az_m\cdots\az_2\az_1$ in $Q$ we set
$x_{p}=x_{\az_m}\cdots x_{\az_2}x_{\az_1}.$  Then $x$ satisfies a
relation
 $\sum_{i=1}^{r}\lz_{i}p_i$ if $\sum_{i=1}^{r}\lz_i x_{p_i}=0.$ If $R$ is a set of relations in $Q,$ then
let $\bbe_{\ud}(Q,R)$ be the closed subvariety of $\bbe_{\ud}(Q)$
which consists of all elements satisfying all relations in $R.$ Any
element $x=(x_{\az})_{\az\in Q_1}$ in $\bbe_{\ud}(Q,R)$ defines in a
natural way a representation $M(x)$ of $A=\bbc Q/J$ with $\udim
M(x)=\ud,$ where $J$ is the admissible ideal generated by $R$.
 We consider the algebraic group
$$G_{\ud}(Q)=\prod_{\i\in I}GL(a_i,\bbc),$$ which acts on
 $\bbe_{\ud}(Q)$ by $(x_{\az})^{g}=(g_{t(\az)}x_{\az}g_{s(\az)}^{-1})$ for $g\in G_{\ud}$ and $(x_{\az})\in\bbe_{\ud}.$
It naturally induces an action of $G_{\ud}(Q)$ on
$\bbe_{\ud}(Q,R).$ The induced orbit space is denoted by
$\bbe_{\ud}(Q,R)/G_{\ud}(Q).$ There is a natural bijection between
the set ${\mathcal M}(A,\ud)$ of isomorphism classes of
$\bbc$-representations of $A$ with dimension vector $\ud$ and the
set of orbits of $G_{\ud}(Q)$ in $\bbe_{\ud}(Q,R).$ So we may
identify  ${\mathcal M}(A,\ud)$ with $\bbe_{\ud}(Q,R)/G_{\ud}(Q).$

The intersection of an open subset and a close subset in
$\bbe_{\ud}(Q,R)$ is called a locally closed subset. A subset in
$\bbe_{\ud}(Q,R)$ is called constructible if and only if it is a
disjoint union of finitely many locally closed subsets. Obviously,
an open set and a closed set are both constructible sets. A
function $f$ on $\bbe_{\ud}(Q,R)$ is called constructible if
$\bbe_{\ud}(Q,R)$ can be divided into finitely many constructible
sets such that $f$ is constant on each such constructible
set. Write M(X) for the $\mathbb{C}$-vector space of constructible
functions on some complex algebraic variety $X$.

Let $\mathcal{O}$ be a constructible set as defined above. Let
$1_{\mathcal{O}}$ be the characteristic function of $\mathcal{O}$,
defined by $1_{\mathcal{O}}(x)=1$, for any $x\in \mathcal{O}$ and
$1_{\mathcal{O}}(x)=0$, for any $x\notin \mathcal{O}$.
 It is clear that $1_{\mathcal{O}}$ is the simplest
constructible function and any constructible function is a linear
combination of characteristic functions.  For any constructible
subset $\mathcal{O}$ in $\bbe_{\ud}(Q,R)$, we call $\mathcal{O}$
$G_{\ud}$-invariant if $G_{\ud}\cdot\mathcal{O}=\mathcal{O}.$

In the following, we will always assume constructible sets and
functions to be $G_{\ud}$-invariant unless particular stated.

\subsection{}
Let $\chi$ denote Euler characteristic in compactly-supported
cohomology. Let $X$ be an algebraic variety and $\mo$ a
constructible subset which is the disjoint union of finitely many
locally closed subsets $X_i$ for $i=1,\cdots,m.$ Define
$\chi(\mo)=\sum_{i=1}^m\chi(X_i).$ Note that it is
well-defined. We will use the following properties:
\begin{Prop}[\cite{Riedtmann} and \cite{Joyce}]\label{Euler} Let $X,Y$ be algebraic varieties over $\mathbb{C}.$
Then
\begin{enumerate}
    \item  If the algebraic variety $X$ is the disjoint union of
finitely many constructible sets $X_1,\cdots,X_r$, then
$$\chi(X)=\sum_{i=1}^{r}{\chi(X_i)}.$$
    \item  If $\varphi:X\longrightarrow Y$ is a morphism
with the property that all fibers have the same Euler
characteristic $\chi$, then $\chi(X)=\chi\cdot \chi(Y).$ In
particular, if $\varphi$ is a locally trivial fibration in the
analytic topology with fibre $F,$ then $\chi(X)=\chi(F)\cdot
\chi(Y).$
    \item $\chi(\bbc^n)=1$ and $\chi(\mathbb{P}^n)=n+1$ for all $n\geq
    0.$
\end{enumerate}
\end{Prop}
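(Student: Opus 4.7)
The plan is to reduce everything to the fundamental long exact sequence of compactly-supported cohomology associated to an open-closed decomposition. For a closed subvariety $Z \subset X$ with open complement $U = X \setminus Z$, the sequence
$$\cdots \lra H^k_c(U) \lra H^k_c(X) \lra H^k_c(Z) \lra H^{k+1}_c(U) \lra \cdots$$
yields the basic additivity $\chi(X) = \chi(U) + \chi(Z)$. From this (1) follows by induction on the number of strata in a constructible decomposition: if $X = \bigsqcup_{i=1}^{r} X_i$ with each $X_i$ locally closed, order the strata so that $X_1 \cup \cdots \cup X_j$ is closed in $X$ for each $j$ (which is always possible for a finite constructible stratification), and iterate the basic additivity.

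For (2), I would first handle the locally trivial case, which is the special statement at the end: by covering $Y$ with a finite collection of locally closed pieces over which the fibration trivializes and applying (1), one reduces to the case of a trivial fibration $F \times V \to V$, for which the Künneth formula for compactly-supported cohomology (or directly the Leray spectral sequence) gives $\chi(F \times V) = \chi(F)\cdot\chi(V)$. For the general constructible version with constant fiber Euler characteristic $\chi$, I would invoke a generic triviality argument: by stratifying $Y$ into finitely many locally closed subvarieties $Y_j$ such that $\vhi^{-1}(Y_j) \lra Y_j$ is a locally trivial fibration (which is possible for any morphism of algebraic varieties over $\bbc$), one gets $\chi(\vhi^{-1}(Y_j)) = \chi(F_j)\cdot\chi(Y_j)$ from the previous case, and by hypothesis $\chi(F_j) = \chi$ for every $j$, so summing via (1) gives $\chi(X) = \chi \cdot \chi(Y)$.

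For (3), direct computation: $\chi(\bbc) = 1$ since $H^2_c(\bbc) \cong \bbc$ is the unique nonvanishing compactly-supported cohomology group, and then $\chi(\bbc^n) = 1$ by the Künneth formula (or by (2) applied to $\bbc^n \lra \text{pt}$). For projective space, the cell decomposition $\mathbb{P}^n = \bbc^n \sqcup \mathbb{P}^{n-1}$ together with (1) and induction on $n$ yields $\chi(\mathbb{P}^n) = 1 + n = n+1$.

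The main obstacle is the constructible case of (2): the fibration case is essentially formal once Künneth is available, but the hypothesis that all fibers of $\vhi$ merely share a common Euler characteristic is considerably weaker than local triviality, and bridging this gap requires the nontrivial fact that any morphism of complex algebraic varieties admits a finite stratification of the target by locally closed pieces over which the restriction becomes a locally trivial fibration in the analytic topology. This stratification theorem (a form of Verdier's generic triviality, applied iteratively to the complement) is the only ingredient beyond elementary six-functor formalism; the rest is assembly of pieces already present in the excerpt.
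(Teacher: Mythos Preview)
The paper does not give its own proof of this proposition: it is stated as a quotation of known results from \cite{Riedtmann} and \cite{Joyce}, and is used throughout without argument. There is therefore nothing in the paper to compare your proposal against.

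That said, your sketch is the standard route to these facts and is essentially correct. One small imprecision in your argument for (1): for an arbitrary finite partition of $X$ into locally closed sets it is not literally true that one can reorder so that each initial union $X_1\cup\cdots\cup X_j$ is closed; what one can always do is refine the partition to a genuine stratification (e.g.\ by intersecting with closures and complements) for which such an ordering exists, or more simply invoke the well-definedness of $\chi$ on constructible sets that the paper already records just before the proposition. For (2), the passage from ``locally trivial fibration'' to ``morphism with constant fiber Euler characteristic'' does, as you note, rest on a nontrivial generic-fibration or stratification theorem (Thom--Verdier type); this is exactly the content buried in the citation to \cite{Joyce}, so your identification of it as the one serious input is accurate.
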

We recall the definition {\it pushforward} functor from the
category of algebraic
varieties over $\mathbb{C}$ to the category of $\mathbb{Q}$-vector
spaces (see \cite{Macpherson} and \cite{Joyce}).

\nd Let $\phi: X\rightarrow Y$ be a morphism of varieties. For $f\in
M(X)$ and $y\in Y,$ define
$$
\phi_{*}(f)(y)=\sum_{c\in\bbq}c\chi(f^{-1}(c)\cap \phi^{-1}(y))
$$
\begin{Thm}[\cite{Dimca},\cite{Joyce}]\label{Joyce}
Let $X,Y$ and $Z$ be algebraic varieties over $\mathbb{C},$ $\phi:
X\rightarrow Y$ and $\psi: Y\rightarrow Z$ be morphisms of
varieties, and $f\in M(X).$ Then $\phi_{*}(f)$ is constructible,
$\phi_{*}: M(X)\rightarrow M(Y)$ is a $\mathbb{Q}$-linear map and
$(\psi\circ \phi)_{*}=(\psi)_{*}\circ (\phi)_{*}$ as
$\mathbb{Q}$-linear maps from $M(X)$ to $M(Z).$
\end{Thm}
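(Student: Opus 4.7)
My plan is to establish the three claims in turn: (a) constructibility of $\phi_*(f)$, (b) $\mathbb{Q}$-linearity of $\phi_*$, and (c) the composition rule. Since any constructible function on $X$ is a finite $\mathbb{Q}$-linear combination of characteristic functions $1_\mathcal{O}$ of constructible subsets, and since the defining sum
$$\phi_*(f)(y) = \sum_{c\in\mathbb{Q}} c\,\chi\bigl(f^{-1}(c)\cap\phi^{-1}(y)\bigr)$$
is straightforwardly $\mathbb{Q}$-linear in $f$ (direct verification on scalar multiples and sums of characteristic functions, using Proposition \ref{Euler}(1) to handle possible collisions of values), claim (b) is immediate and it suffices to verify (a) and (c) for a single characteristic function $f=1_\mathcal{O}$.

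For (a), note $\phi_*(1_\mathcal{O})(y) = \chi(\mathcal{O}\cap\phi^{-1}(y))$. The crucial input is a stratification theorem for morphisms of complex varieties: there exists a finite partition of $Y$ into locally closed subsets $Y_1,\ldots,Y_m$ such that, for each $j$, the restricted map $\mathcal{O}\cap\phi^{-1}(Y_j)\to Y_j$ is a locally trivial fibration in the analytic topology with fibre a fixed variety $F_j$. This is the Whitney/Verdier generic local triviality result applied to the constructible morphism $\phi|_\mathcal{O}$, invoked from \cite{Joyce}. By Proposition \ref{Euler}(2), the function $y\mapsto\chi(\mathcal{O}\cap\phi^{-1}(y))$ equals the constant $\chi(F_j)$ on $Y_j$; hence $\phi_*(1_\mathcal{O})$ takes finitely many values, each on a locally closed subset, so it is constructible.

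For (c), evaluate both sides pointwise at $z\in Z$. On the one hand,
$$(\psi\circ\phi)_*(1_\mathcal{O})(z) = \chi\bigl(\mathcal{O}\cap\phi^{-1}(\psi^{-1}(z))\bigr).$$
Refine the stratification $\{Y_j\}$ from (a), if necessary, so that each $Y_j\cap\psi^{-1}(z)$ is again locally closed. Writing $c_j=\chi(F_j)$, Proposition \ref{Euler}(1)(2) gives
$$\chi\bigl(\mathcal{O}\cap\phi^{-1}(\psi^{-1}(z))\bigr) = \sum_j c_j\,\chi\bigl(Y_j\cap\psi^{-1}(z)\bigr).$$
Since $\phi_*(1_\mathcal{O})\equiv c_j$ on $Y_j$, the level set $\phi_*(1_\mathcal{O})^{-1}(c)\cap\psi^{-1}(z)$ is the disjoint union of the pieces $Y_j\cap\psi^{-1}(z)$ with $c_j=c$, so regrouping the sum yields exactly $\psi_*(\phi_*(1_\mathcal{O}))(z)$ by definition.

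The principal obstacle is (a): producing a stratification of $Y$ over which the fibrewise Euler characteristic of $\phi|_\mathcal{O}$ is locally constant. For morphisms of complex algebraic varieties this is a non-trivial but standard consequence of the theory of Whitney stratifications together with Verdier's theorem on generic local triviality; once it is granted, everything else is bookkeeping with Euler characteristics via Proposition \ref{Euler}.
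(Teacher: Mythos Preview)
The paper does not supply a proof of this theorem at all; it is quoted from the literature with citations to \cite{Dimca} and \cite{Joyce} and then used as a black box. Your sketch is a correct outline of the standard argument one finds in those references: reduce by linearity to $f=1_{\mathcal{O}}$, invoke generic local triviality (Whitney/Verdier) to stratify $Y$ so that the fibrewise Euler characteristic is constant on each stratum, and then obtain the composition rule by decomposing $\psi^{-1}(z)$ along the same strata and using Proposition~\ref{Euler}. One cosmetic remark: the refinement you mention in part~(c) is unnecessary, since $Y_j\cap\psi^{-1}(z)$ is automatically locally closed ($\psi^{-1}(z)$ being closed); otherwise the argument is sound.
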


 In order to deal with orbit spaces, we
 need to consider
geometric quotients.
\begin{Definition}\label{quotient}
Let $G$ be an algebraic group acting on a variety $X$ and
$\phi:X\rightarrow Y$ be a $G$-invariant morphism, i.e. a morphism
constant on orbits. The pair $(Y,\phi)$ is called a geometric
quotient if $\phi$ is open and for any open subset $U$ of $Y$, the
associated comorphism identifies the ring $\mo_{Y}(U)$ of regular
functions on $U$ with the ring $\mo_{X}(\phi^{-1}(U))^{G}$ of
$G$-invariant regular functions
 on $\phi^{-1}(U)$.
\end{Definition}

The following result due to Rosenlicht \cite{Ro} is essential to us.

\begin{Lemma}\label{Rosenlicht}
Let $X$ be a $G$-variety, then there exists an  open and dense
$G$-stable subset which has a geometric $G$-quotient.
\end{Lemma}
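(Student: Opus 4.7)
The plan is to invoke Rosenlicht's classical result on fields of invariant rational functions and then construct the quotient geometrically. First I would reduce to the case where $X$ is irreducible: since $X$ has finitely many irreducible components and $G$ permutes them (with the identity component $G^{0}$ stabilizing each), it suffices to exhibit a geometric quotient on an open dense $G$-stable subset of a single component. After replacing $X$ by a component and shrinking, I may further assume $X$ is smooth and that $G$ acts with all orbits of the same maximal dimension $d$, since both conditions hold on a nonempty $G$-invariant open subset.

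Next I would analyze the field $\bbc(X)^{G}$ of $G$-invariant rational functions on $X$. The heart of the proof is the assertion that $G$-invariant rational functions separate generic orbits: there exist $f_{1},\ldots,f_{n}\in\bbc(X)^{G}$ such that two sufficiently general points $x,x'\in X$ lie in the same $G$-orbit if and only if $f_{i}(x)=f_{i}(x')$ for $i=1,\ldots,n$. Choosing a quasi-projective model $Y$ whose function field equals $\bbc(X)^{G}=\bbc(f_{1},\ldots,f_{n})$ yields a dominant rational map $\varphi\colon X\dashrightarrow Y$ constant on orbits.

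The third step is to pass to a $G$-invariant open dense subset $U\subset X$ on which $\varphi$ is a morphism and the separation property holds, and to replace $Y$ by $\varphi(U)$. On $U$ the fibers of $\varphi$ are exactly the $G$-orbits, all of the common dimension $d=\dim X-\dim Y$, so $\varphi\colon U\to Y$ is surjective with equidimensional fibers; after further shrinking via generic flatness I may assume $\varphi$ is flat of finite type, hence open. For the comorphism condition of Definition \ref{quotient}, given open $V\subset Y$, any $G$-invariant regular function $h$ on $\varphi^{-1}(V)$ is constant on fibers (= orbits) of $\varphi$, so descends set-theoretically to a function on $V$ that lies in $\bbc(Y)=\bbc(X)^{G}$; openness of $\varphi$ and standard descent then force this function to be regular on $V$, giving the identification $\mo_{Y}(V)=\mo_{U}(\varphi^{-1}(V))^{G}$.

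The main obstacle I anticipate is the separation-of-orbits statement, which is the genuinely nontrivial content of Rosenlicht's theorem and subsumes the finite generation of $\bbc(X)^{G}$. A standard route is to consider the graph morphism $\ggz\colon G\times X\to X\times X$, $(g,x)\mapsto(x,gx)$, and analyze the closure $Z$ of its image: one shows that, over a dense open subset of $X$, the fiber of the first projection $Z\to X$ is precisely the orbit closure. Combined with the noetherianity of $G$ and a careful choice of generators of $\bbc(X)^{G}$ obtained from coefficients of a resolvent polynomial over $\bbc(X)$, this produces the separating invariants. Once orbit separation is in hand, the openness of $\varphi$ and the sheaf-theoretic verification of the quotient are routine.
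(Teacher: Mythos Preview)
Your proposal outlines the classical proof of Rosenlicht's theorem and is essentially correct as a sketch: reduce to an irreducible component, use that $\bbc(X)^{G}$ is finitely generated and separates generic orbits, model the invariant field by an auxiliary variety $Y$, and then shrink until the induced map $U\to Y$ is an open surjection whose fibers are exactly the $G$-orbits. The one place where your write-up is slightly loose is the ``separation of orbits'' step: the argument via the graph map $G\times X\to X\times X$ and Chevalley constructibility is the right idea, but you should be explicit that constructibility of the image, together with equidimensionality of orbits on a suitable open set, forces the generic fiber of the first projection of the closure to coincide with the orbit, which is what yields enough invariants.

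However, you should be aware that the paper itself does \emph{not} prove this lemma. It is stated with attribution (``The following result due to Rosenlicht \cite{Ro} is essential to us'') and then used as a black box to build the finite stratification $X=\bigcup_i U_i$ by $G$-invariant locally closed subsets admitting geometric quotients. So there is nothing to compare your argument against: the paper's ``proof'' is simply the citation. Your sketch is therefore more than what the paper provides, and would serve as a self-contained justification if one were desired.
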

By this Lemma, we can construct a finite stratification over $X.$
Let $U_1$ be an
open and dense $G$-stable subset of $X$ as in Lemma
\ref{Rosenlicht}. Then
$\dim_{\mathbb{C}}(X-U_1)<\dim_{\mathbb{C}}X.$ We can use the
above
lemma again, there exists a dense open $G$-stable subset $U_2$ of
$X-U_1$ which has a geometric $G$-quotient. Inductively, we get a
finite stratification
$X=\cup_{i=1}^{l}U_{i}$ where $U_{i}$ is a
$G$-invariant locally closed subset and has a geometric quotient,
$l\leq \dim_{\mathbb{C}}X.$ We denote by $\phi_{U_i}$ the
geometric
quotient map on $U_i.$ Define the quasi Euler-Poincar\'e
characteristic of $X/G$ by
$\chi(X/G):=\sum_{i}\chi(\phi_{U_i}(U_i)).$ If $\{U'_i\}$ is
another choice in the definition of $\chi(X/G)$, then
$\chi(\phi_{U_i}(U_i))=\sum_{j}\chi(\phi_{U_i\cap U'_j}(U_i\cap
U'_j))$ and $\chi(\phi_{U'_j}(U'_j))=\sum_{i}\chi(\phi_{U_i\cap
U'_j}(U_i\cap U'_j)).$ Thus
$\sum_{i}\chi(\phi_{U_i}(U_i))=\sum_{i}\chi(\phi_{U'_i}(U'_i))$
and $\chi(X/G)$ is well-defined (see \cite{XXZ2006}). Similarly,
$\chi(\mo/G):=\sum_i\chi(\phi_{U_i}(\mo\bigcap U_i))$ is
well-defined for any $G$-invariant constructible subset $\mo$ of
$X.$

\subsection{}
We also introduce the following notation. Let $f$ be a
constructible function over a variety $X,$ it is natural to define
\begin{equation}\label{integral}
\int_{x\in X}f(x):=\sum_{m\in \bbc}m\chi(f^{-1}(m))
\end{equation}
Comparing with Proposition \ref{Euler}, we also have the following
(see \cite{XXZ2006}).
\begin{Prop}\label{Euler2} Let $X,Y$ be algebraic varieties over
$\mathbb{C}$ under the actions of the algebraic groups $G$ and
$H$,
%
respectively.  Then
\begin{enumerate}
    \item  If the algebraic variety $X$ is the disjoint union of
finitely many $G$-invariant constructible sets $X_1,\cdots,X_r$,
then
$$\chi(X/G)=\sum_{i=1}^{r}{\chi(X_i/G)}$$
    \item  If a morphism $\varphi:X\longrightarrow Y$  induces
    a quotient map $\phi:X/G\rightarrow Y/H$
whose fibers all have the same Euler
characteristic $\chi$, then $\chi(X/G)=\chi\cdot \chi(Y/H).$
   \end{enumerate}
\end{Prop}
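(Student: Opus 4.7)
The plan is to reduce both statements to Proposition \ref{Euler} by constructing compatible stratifications via Lemma \ref{Rosenlicht} and running the classical Euler-characteristic arguments on the resulting quotient varieties.

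For part (1), I would choose a single stratification $X=\sqcup_i U_i$ of $X$ into $G$-invariant locally closed subsets with geometric quotients $\phi_{U_i}\colon U_i\to\bar U_i$, as provided by iterating Lemma \ref{Rosenlicht}. Because each $X_j$ is $G$-invariant, each intersection $X_j\cap U_i$ is a union of $G$-orbits, so the images $\phi_{U_i}(X_j\cap U_i)$, $j=1,\dots,r$, are disjoint constructible subsets of $\bar U_i$ whose union is $\bar U_i$. Applying Proposition \ref{Euler}(1) on each $\bar U_i$ gives
$$\chi(\bar U_i)\;=\;\sum_{j=1}^r\chi\bigl(\phi_{U_i}(X_j\cap U_i)\bigr),$$
and summing over $i$ yields $\chi(X/G)=\sum_j\chi(X_j/G)$ by the definition of $\chi(X_j/G)$ for a $G$-invariant constructible subset.

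For part (2), I would first apply Lemma \ref{Rosenlicht} iteratively to $Y$ to obtain a stratification $Y=\sqcup_i V_i$ by $H$-invariant locally closed subsets with geometric quotients $\phi_{V_i}\colon V_i\to\bar V_i$. Each preimage $\varphi^{-1}(V_i)$ is a $G$-invariant constructible subset of $X$, and applying Lemma \ref{Rosenlicht} again produces $\varphi^{-1}(V_i)=\sqcup_j W_{i,j}$ with geometric quotients $\phi_{W_{i,j}}\colon W_{i,j}\to\bar W_{i,j}$. By the universal property of geometric quotients, the composition $\phi_{V_i}\circ\varphi|_{W_{i,j}}$ is $G$-invariant and descends to a morphism of varieties $\bar\pi_{i,j}\colon\bar W_{i,j}\to\bar V_i$ compatible with $\phi$. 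In particular $\varphi^{-1}(V_i)/G=\sqcup_j\bar W_{i,j}$, so the fibre over any $\bar v\in\bar V_i$ decomposes as
$$\phi^{-1}(\bar v)\;=\;\bigsqcup_j\bar\pi_{i,j}^{-1}(\bar v),$$
and the hypothesis yields $\sum_j\chi(\bar\pi_{i,j}^{-1}(\bar v))=\chi$ for every such $\bar v$.

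To finish, note that by Theorem \ref{Joyce} each pushforward $(\bar\pi_{i,j})_*(1_{\bar W_{i,j}})$ is a constructible function on $\bar V_i$, so $\bar V_i$ admits a common finite stratification over which every $\bar\pi_{i,j}$ has fibres of constant Euler characteristic. Proposition \ref{Euler}(2) applied piece by piece then gives $\chi(\bar W_{i,j})=\int_{\bar v\in\bar V_i}\chi(\bar\pi_{i,j}^{-1}(\bar v))$ in the sense of (\ref{integral}); summing over $j$ yields $\sum_j\chi(\bar W_{i,j})=\chi\cdot\chi(\bar V_i)$, and a final sum over $i$ produces $\chi(X/G)=\chi\cdot\chi(Y/H)$. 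The main obstacle is the coordinated choice of stratifications—of $Y$, of each $\varphi^{-1}(V_i)$, and then a common refinement of each $\bar V_i$ making every $\bar\pi_{i,j}$ fibrewise Euler-constant—after which the calculation is just Proposition \ref{Euler}(2) fibred over the base.
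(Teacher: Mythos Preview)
The paper does not actually prove this proposition: it is stated with the parenthetical reference ``(see \cite{XXZ2006})'' and no argument is supplied here. So there is no in-paper proof to compare against; the authors defer to \cite{XXZ2006}.

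That said, your approach is exactly the natural one and is almost certainly what appears in the cited reference. Part~(1) is just the well-definedness argument the paper sketches immediately before the proposition, applied to the decomposition $X=\sqcup_j X_j$ rather than to two competing Rosenlicht stratifications. Part~(2) follows the expected pattern: stratify the base, pull back, stratify the fibres, descend the morphism to the quotients via the universal property, and then feed everything into Proposition~\ref{Euler}(2). The one technical point worth making explicit is that $\varphi^{-1}(V_i)$ is genuinely locally closed (preimage of a locally closed set under a morphism), so Lemma~\ref{Rosenlicht} applies directly; and that the hypothesis ``$\varphi$ induces $\phi$'' is precisely what guarantees $\phi_{V_i}\circ\varphi|_{W_{i,j}}$ is $G$-invariant, so the descent to $\bar\pi_{i,j}$ goes through. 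With those remarks, your argument is complete.
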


Moreover, if there exists an action of an algebraic group $G$ on
$X$
as in Definition \ref{quotient}, and $f$ is a $G$-invariant
constructible function over $X,$ we define
\begin{equation}\label{integral2}
\int_{x\in X/G}f(x):=\sum_{m\in \bbc}m\chi(f^{-1}(m)/G)
\end{equation}

In particular, we frequently use the following corollary.
\begin{Cor}\label{euler3}
Let $X,Y$ be algebraic varieties over $\mathbb{C}$ under the actions
%
of an algebraic group $G.$  These actions naturally induce an
action of $G$ on $X\times Y.$ Then
$$\chi(X\times_{G}Y)=\int_{y\in Y/G}\chi(X/G_{y})$$
where $G_y$ is the stabilizer in $G$ of $y\in Y$ and
$X\times_{G}Y$ is the orbit space of $X\times Y$ under the action of
$G.$
\end{Cor}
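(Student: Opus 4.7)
The plan is to view $\chi(X\times_G Y)$ as the Euler characteristic of the total space of the projection to the second factor (passed to quotients), and then compute it by slicing along the fibres, which are precisely the orbit spaces $X/G_y$.

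First I would consider the $G$-equivariant projection $\pi\colon X\times Y\to Y$. This descends to a well-defined map $\bar\pi\colon (X\times Y)/G\to Y/G$ on any pair of strata equipped with geometric quotients as in Lemma \ref{Rosenlicht}. Set-theoretically, the fibre of $\bar\pi$ over the class $[y]\in Y/G$ consists of those $G$-orbits of $X\times Y$ meeting $X\times\{y\}$; since $(x,y)\sim(x',y)$ under the diagonal $G$-action iff there exists $g\in G_y$ with $gx=x'$, this fibre is canonically $X/G_y$. Hence the function $h\colon Y\to\bbc$ defined by $h(y)=\chi(X/G_y)$ depends only on the $G$-orbit of $y$ (since $G_{gy}=gG_yg^{-1}$ and conjugation induces an isomorphism of orbit spaces), so $h$ is $G$-invariant.

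Next I would show $h$ is constructible. Using Rosenlicht's lemma iteratively, stratify $Y$ into finitely many $G$-invariant locally closed pieces $Y_1,\ldots,Y_N$ each admitting a geometric quotient $\phi_{Y_k}\colon Y_k\to Y_k/G$. On each $Y_k$ the assignment $y\mapsto G_y$ is locally (on further refinement) of constant conjugacy class, so by an analogous stratification of $X\times Y_k\to Y_k$ and the fibration properties of Proposition \ref{Euler}(2), the function $h|_{Y_k}$ is constructible. Refining, we may assume $h$ takes a constant value $c_k\in\bbc$ on each $Y_k$, so that
$$h^{-1}(m)=\bigsqcup_{\{k\,:\,c_k=m\}}Y_k.$$

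Now I would finish by stratifying $X\times Y=\bigsqcup_k(X\times Y_k)$ and applying Proposition \ref{Euler2}. Part (1) of that proposition gives
$$\chi((X\times Y)/G)=\sum_{k=1}^N\chi((X\times Y_k)/G).$$
For each $k$, the induced map $(X\times Y_k)/G\to Y_k/G$ has all fibres of the form $X/G_y$ with $y\in Y_k$, hence of constant Euler characteristic $c_k$; so part (2) of Proposition \ref{Euler2} yields $\chi((X\times Y_k)/G)=c_k\cdot\chi(Y_k/G)$. Collecting terms by value of $h$ and using the definition (\ref{integral2}),
$$\chi((X\times Y)/G)=\sum_k c_k\,\chi(Y_k/G)=\sum_{m\in\bbc}m\,\chi(h^{-1}(m)/G)=\int_{y\in Y/G}\chi(X/G_y).$$

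The main obstacle is the constructibility of $h$ together with the justification that one can pass to a stratification on which both $Y$ has a geometric $G$-quotient and the fibre Euler characteristics of $\bar\pi$ are constant. Once this is in place, the rest is a straightforward bookkeeping using parts (1) and (2) of Proposition \ref{Euler2} and the definition of the integral.
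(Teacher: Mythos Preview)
Your proof is correct and is precisely the argument the paper intends: the paper states this result as a corollary of Proposition~\ref{Euler2} without giving any further proof, and your stratify-then-fibre approach via parts (1) and (2) of that proposition is the natural way to unpack the implication. The only point you flag as delicate --- constructibility of $y\mapsto\chi(X/G_y)$ and the simultaneous stratification --- is exactly what the paper is content to leave implicit.
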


\section{Green's formula over finite fields}\label{greenfinite}

\subsection{}
In this section, we recall Green's formula  over finite fields
(\cite{Green},\cite{RingelGreen}). Let $k$ be a finite field and
$\Lambda$ a hereditary finitary
 $k$-algebra, i.e., $\ext^1(M,N)$ is a finite set and $\ext^2(M,N)=0$ for any $\Lambda$-modules $M, N$.
 Let $\mathcal{P}$ be
the set of isomorphism classes of finite $\Lambda$-modules. Let
$\mathcal{H}(\Lambda)$ be the Ringel-Hall algebra associated to
$\mod\Lambda.$  Green introduced on $\mathcal{H}$ a
comultiplication so that $\mathcal{H}$ becomes a bialgebra up to a
twist on $\mathcal{H}\bigotimes\mathcal{H}.$ His proof of the
compatibility between the multiplication and the comultiplication
completely depends on the following Green's formula.

Given $\alpha\in \mathcal{P},$ let $V_{\alpha}$ be a
representative in $\alpha,$ and $a_{\alpha}=|\aut_{\Lambda}
V_{\alpha}|.$ Given $\xi,\eta$ and $\lambda$ in $\mathcal{P},$ let
$g_{\xi\eta}^{\lambda}$ be the number of submodules $Y$ of
$V_{\lambda}$ such that $Y$ and $V_{\lambda}/Y$ belong to
$\eta$ and $\xi$, respectively.
\begin{Thm}
Let $k$ be a finite field and $\Lambda$ a hereditary finitary
$k$-algebra. Let $\xi,\eta,\xi',\eta'\in \mathcal{P}.$ Then
$$
a_{\xi}a_{\eta}a_{\xi'}a_{\eta'}\sum_{\lambda}g_{\xi\eta}^{\lambda}g_{\xi'\eta'}^{\lambda}a_{\lambda}^{-1}
=\sum_{\alpha,\beta,\gamma,\delta}\frac{|\mathrm{Ext}
^1(V_{\gamma},V_{\beta})|}{|\mathrm{Hom}
(V_{\gamma},V_{\beta})|}g_{\gamma\alpha}^{\xi}
g_{\gamma\delta}^{\xi'}g_{\delta\beta}^{\eta}g_{\alpha\beta}^{\eta'}a_{\alpha}a_{\beta}a_{\delta}a_{\gamma}
$$
\end{Thm}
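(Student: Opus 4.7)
The plan is to prove Green's formula by double-counting pairs of short exact sequences through a common middle term, translated into the language of $3 \times 3$ nine-lemma diagrams. For a fixed representative $V_\lambda$, the quantity $a_\xi a_\eta g_{\xi\eta}^\lambda$ equals the number of short exact sequences $0\to V_\eta \to V_\lambda \to V_\xi \to 0$, since one chooses a submodule of $V_\lambda$ in class $\eta$ ($g_{\xi\eta}^\lambda$ choices), an isomorphism from $V_\eta$ onto it ($a_\eta$ choices), and one from the resulting quotient to $V_\xi$ ($a_\xi$ choices). Hence the LHS equals the cardinality of the set of pairs of such short exact sequences sharing a common middle term of class $\lambda$, weighted by $1/a_\lambda$ to mod out the diagonal $\aut V_\lambda$-action, and summed over $\lambda$.

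To any such pair I would associate a $3\times 3$ commutative diagram with exact rows and columns by the standard nine-lemma construction: set $U = V_\eta \cap V_{\eta'}$ inside $V_\lambda$ and let $\beta,\alpha,\delta,\gamma$ denote the isomorphism classes of $U$, $V_\eta/U$, $V_{\eta'}/U$ and $V_\lambda/(V_\eta + V_{\eta'})$ respectively. The four outer edges of this diagram then realise extensions counted by $g_{\alpha\beta}^{\eta'}$, $g_{\gamma\alpha}^\xi$, $g_{\delta\beta}^\eta$, $g_{\gamma\delta}^{\xi'}$, and multiplying by $a_\alpha a_\beta a_\gamma a_\delta$ converts each submodule count into a count of genuine short exact sequences, reproducing the automorphism factors on the right hand side of Green's formula.

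The main obstacle is the converse: given a quadruple $(\alpha,\beta,\gamma,\delta)$ and four outer short exact sequences of the prescribed classes, one must count the middle modules $V_\lambda$ extending this partial data to a full $3\times 3$ diagram. Here the hereditary hypothesis is essential, since it forces $\mathrm{Ext}^2(V_\gamma, V_\beta) = 0$ and thus ensures that the obstruction to completing the partial diagram vanishes, so that $V_\lambda$ arises as an extension of $V_\gamma$ by $U$ compatible with the prescribed outer edges. A Riedtmann--Peng style count, equivalent to the identity $\sum_\lambda g_{\gamma\beta}^\lambda \, a_\gamma a_\beta / a_\lambda = |\mathrm{Ext}^1(V_\gamma, V_\beta)|/|\mathrm{Hom}(V_\gamma, V_\beta)|$, then produces exactly the factor $|\mathrm{Ext}^1(V_\gamma, V_\beta)|/|\mathrm{Hom}(V_\gamma, V_\beta)|$ appearing on the right hand side.

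Summing over $(\alpha,\beta,\gamma,\delta)$ and the extension choices at each outer edge then completes the identification of the RHS. The delicate point is keeping the automorphism bookkeeping consistent across the four outer edges and the interior of the nine-lemma diagram, so that the two count-interpretations of ``a pair of short exact sequences with common middle term'' match on the nose; this is where the hereditary hypothesis enters essentially, and it is what makes the proof ultimately homological rather than purely combinatorial.
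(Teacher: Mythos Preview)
The paper does not prove this theorem; Section~3 is explicitly a ``survey without proof'', citing \cite{Green} and \cite{RingelGreen}. It does, however, lay out the machinery underlying Ringel's presentation of Green's proof: the set $Q(E,m)$ of ``crosses'' (your pairs of short exact sequences through a common middle term), the set $\mathcal{D}^*$ of outer squares, the set $\mathcal{O}(E,m)$ of completed diagrams, and the bijection $\Omega\colon Q(E,m) \to \mathcal{O}(E,m)$ that induces the formula.

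Your overall strategy matches this standard approach, and your reading of the left-hand side is correct. The gap is in your treatment of the converse step. Given the four outer short exact sequences, the middle term $V_\lambda$ is \emph{not} an extension of $V_\gamma$ by $V_\beta$ --- it has the wrong dimension --- so the Riedtmann--Peng identity $\sum_\lambda g_{\gamma\beta}^\lambda a_\gamma a_\beta / a_\lambda = |\mathrm{Ext}^1(V_\gamma,V_\beta)|/|\mathrm{Hom}(V_\gamma,V_\beta)|$ that you invoke does not apply directly. The actual mechanism, as the paper records, is this: from the outer square one forms the pushout $S = V_\eta \sqcup_{V_\beta} V_{\eta'}$ and the pullback $T = V_\xi \times_{V_\gamma} V_{\xi'}$, and there is a canonical map $f\colon S \to T$ sitting in a four-term exact sequence
\[
0 \longrightarrow V_\beta \longrightarrow S \stackrel{f}{\longrightarrow} T \longrightarrow V_\gamma \longrightarrow 0.
\]
The possible middle terms $V_\lambda$ are then in bijection with factorizations $f = cd$ with $d$ injective and $c$ surjective. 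Counting such factorizations via the long exact sequence for $\mathrm{Hom}(V_\gamma,-)$ applied to $0 \to V_\beta \to S \to \mathrm{Im}\,f \to 0$ is what produces the factor $|\mathrm{Ext}^1(V_\gamma,V_\beta)|/|\mathrm{Hom}(V_\gamma,V_\beta)|$. The hereditary hypothesis enters exactly here, as the vanishing of $\mathrm{Ext}^2(V_\gamma,V_\beta)$ needed for $\mathrm{Ext}^1(V_\gamma,S) \to \mathrm{Ext}^1(V_\gamma,\mathrm{Im}\,f)$ to be surjective, i.e.\ for factorizations to exist at all. Your sketch has the right shape and the right place for heredity, but the bookkeeping you allude to in the final paragraph is precisely this pushout--pullback factorization count, not a direct extension count.
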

 Suppose $X\in \xi, Y\in \eta,
M\in\xi', N\in \eta'$ and $A\in \gamma, C\in \alpha, B\in \delta,
D\in \beta, E\in\lambda.$
Set $h^{\xi\eta}_{\lambda}:=|\ext^1(X,Y)_{E}|,$ where
$\ext^1(X,Y)_{E}$ is the subset of $\ext^1(X,Y)$ consisting of
elements $\omega$ such that the middle term of an exact sequence
represented by $\omega$
is isomorphic to $E.$ Then the above formula can be rewritten as
(\cite{DXX},\cite{Hubery2005})
$$
\sum_{\lambda}g_{\xi\eta}^{\lambda}h^{\xi'\eta'}_{\lambda}=\sum_{\alpha,\beta,\gamma,\delta}\frac{|\ext^1(A,D)|
|\hom(M,N)|}{|\hom(A,D)||\hom(A,C)||\hom(B,D)|}g_{\gamma\delta}^{\xi'}g_{\alpha\beta}^{\eta'}h^{\gamma\alpha}_{\xi}
h^{\delta\beta}_{\eta}
$$

\subsection{}
For fixed $kQ$-modules $X,Y,M,N$ with $\udim X+\udim Y=\udim
M+\udim N,$ we fix a
$Q_0$-graded $k$-space $E$ such that $ \udim E=\udim X+\udim Y.$
Let $(E,m)$ be the
$kQ$-module structure on $E$ given by an
algebraic morphism $m: \Lambda\rightarrow \mathrm{End}
_kE.$ Let $Q(E,m)$ be the set of $(a,b,a',b')$ such that the row
and the column of the following diagram are exact:
\begin{equation}\label{E:crosses}
\xymatrix{
& & 0 \ar[d] & &\\
& & Y \ar[d]^-{a'} & & \\
0 \ar[r] & N \ar[r]^-{a} & (E,m) \ar[r]^{b} \ar[d]^-{b'} & M \ar[r] & 0\\
& & X \ar[d] & &\\
& & 0 & &}
\end{equation}
Let
$$
Q(X,Y,M,N)=\bigcup_{m:\Lambda\rightarrow End
_kE}Q(E,m)
$$
It is clear that
$$
|Q(E,m)|=g_{\xi\eta}^{\lambda}g_{\xi'\eta'}^{\lambda}a_{\xi}a_{\eta}a_{\xi'}a_{\eta'}
$$ where $\lz\in\mathcal{P}$ is such that
$(E,m)\in\lz,$ or simply write
$m\in\lz.$
$$
|Q(X,Y,M,N)|=\sum_{\lambda}\frac{|Aut_{k}E|}{a_{\lambda}}g_{\xi\eta}^{\lambda}g_{\xi'\eta'}^{\lambda}a_{\xi}a_{\eta}a_{\xi'}a_{\eta'}
$$
There is an action of $\aut_{\Lambda}(E,m)$ on $Q(E,m)$ given by

$$g.(a,b,a',b')=(ga,bg^{-1},ga',b'g^{-1})$$
This induces an orbit space of $Q(E,m)$, denoted by $Q(E,m)^{*}$.
The orbit of $(a,b,a',b')$ in $Q(E,m)^{*}$
is denoted by $(a,b,a',b')^{*}$. We have
$$
|Q(X,Y,M,N)|=|\aut_{k}E|\sum_{\lambda\in
\mp}\sum_{(a,b,a',b')^{*}\in Q(E,m)^{*},m\in
\lambda}\frac{1}{|\hom(\cok b'a,\ker ba')|}
$$
Furthermore, there is an action of the group $\aut X\times \aut Y$
on
$Q(E,m)^{*}$ given by
$$
(g_1,g_2).(a,b,a',b')^{*}=(a,b,a'g_2^{-1},g_1b')^{*}
$$
for $(g_1,g_2)\in \aut X\times \aut Y$ and $(a,b,a',b')^{*}\in
Q(E,m)^{*}.$ The stabilizer $G((a,b,a',b')^{*})$ of
$(a,b,a',b')^{*}$ is
$$
\{(g_1,g_2)\in \aut X\times \aut Y\mid ga'=a'g_2,b'g=g_1b'\quad
\mbox{for some }g\in 1+a\hom(M,N)b\}
$$

The orbit space is denoted by $Q(E,m)^{\wedge}$ and the orbit of
$(a,b,a',b')^{*}$ is
denoted by $(a,b,a',b')^{\wedge}.$ We have
$$
\frac{1}{a_Xa_Y}|Q(E,m)^{*}|=\sum_{(a,b,a',b')^{\wedge}\in
Q(E,m)^{\wedge}}\frac{1}{|G((a,b,a',b')^{*})|}
$$

\subsection{}
Let $\md(X,Y,M,N)^{*}$ be the set of $(B,D,e_1,e_2,e_3,e_4)$ such
that the following diagram has exact rows and exact
columns:
\begin{equation}\label{E:gurz}
\xymatrix{ & 0 \ar[d] &&&&  0\ar[d] && \\
0 \ar[r] & D \ar[rr]^-{e_1} \ar[dd]^-{u'} && Y\ar[rr]^-{e_2} && B \ar[r] \ar[dd]^-{x}& 0\\
&&&&&&\\
& N \ar[dd]^-{v'} && && M \ar[dd]^-{y} &\\
&&&&&&\\
0\ar[r] &  C \ar[d] \ar[rr]^-{e_3} && X \ar[rr]^-{e_4} && A \ar[r] \ar[d] & 0\\
& 0 && && 0 &}
\end{equation}
where $B,D$ are submodules of $M,N,$ respectively and $A=M/B,
C=N/D.$ The maps $u',v'$ and $x,y$ are naturally induced. We have
$$
|\md(X,Y,M,N)^{*}|=\sum_{\alpha,\beta,\gamma,\delta}g_{\gamma\alpha}^{\xi}g_{\gamma\delta}^{\xi'}g_{\delta\beta}^{\eta}g_{\alpha\beta}^{\eta'}a_{\alpha}a_{\beta}a_{\delta}a_{\gamma}
$$

There is an action of the group $\aut_{\Lambda}X\times
\aut_{\Lambda}Y$ on $\md(X,Y,M,N)^{*}$ given by
$$
(g_1,g_2).(B,D,e_1,e_2,e_3,e_4)=(B,D,g_2e_1,e_2g_2^{-1},g_1e_3,e_4g_1^{-1})
$$
for $(g_1,g_2)\in \aut_{\Lambda}X\times \aut_{\Lambda}Y.$ The
orbit space is denoted by $\md(X,Y,M,N)^{\wedge}.$ We have
$$
|\md(X,Y,M,N)^{\wedge}|=\frac{1}{a_Xa_Y}\sum_{\alpha,\beta,\gamma,\delta}|\hom(A,C)|
|\hom(B,D)|g_{\gamma\alpha}^{\xi}g_{\gamma\delta}^{\xi'}g_{\delta\beta}^{\eta}g_{\alpha\beta}^{\eta'}a_{\alpha}
a_{\beta}a_{\delta}a_{\gamma}
$$

Fix a square as above ,
let $T=X \times_{A} M =\{(x \oplus m) \in X
\oplus M\;|\; e_4(x)=y(m)\}$ and $S=Y \sqcup_{\small{D}}
\hspace{-.05in}N =Y \oplus N / \{e_1(d)\oplus u'(d)\;|\;d \in D\}.$
There is a unique map $f:S\rightarrow T$ ( see \cite{RingelGreen})
such that the natural long sequence
\begin{equation}\label{longexactsequence}
0\rightarrow D\rightarrow S\stackrel{f}\rightarrow T\rightarrow
A\rightarrow 0
\end{equation}
is exact.

Let $(c,d)$ be a pair of maps such that $c$ is
surjective, $d$ is injective and $cd=f.$ The number of such pairs
can be computed as follows.
We have the following
commutative diagram:
\begin{equation}\label{3}
\xymatrix{&0\ar[r]& S\ar[r]^{d} \ar[d]& (E,m)
\ar[r]^{d_1}\ar[d]^{c}&
A\ar[r]\ar@{=}[d]&0\\
\varepsilon_0:& 0\ar[r] & \mathrm{Im}f
 \ar[r]& T\ar[r] &A \ar[r]& 0}
\end{equation}
The exact sequence
$$
\xymatrix{0\ar[r]&D\ar[r]& S\ar[r]& \mathrm{Im}f
 \ar[r]&0}
$$
induces the following long exact sequence:
\begin{equation}\label{E:Cross4}
\xymatrix{ 0 \ar[r] & \text{Hom}(A,D) \ar[r] & \text{Hom}(A,S)
\ar[r]& \text{Hom}(A,\mathrm{Im}f
) \ar[r] &}
\end{equation}
$$\qquad\xymatrix{
 \ar[r] & \text{Ext}^1(A,D) \ar[r] & \text{Ext}^1(A,S) \ar[r]^-{\phi} & \text{Ext}^1(A,\mathrm{Im}f
) \ar[r] & 0}
$$
We set $\varepsilon_0\in \ext^1(A,\Im f)$ corresponding to the
canonical exact sequence
$$
\xymatrix{0\ar[r]&\Im f \ar[r]& T \ar[r]& A \ar[r]&0}
$$
and denote $\phi^{-1}(\varepsilon_0)\bigcap \ext^1(A,S)_{(E,m)}$
by $\phi^{-1}_{m}(\varepsilon_0).$
 Let $\mathcal{F}(f;m)$ be the set of
$(c,d)$ induced by diagram \eqref{3} with center term $(E,m).$ Let
$$\mathcal{F}(f)=\bigcup_{m:\Lambda\rightarrow
\mathrm{End}
_kE}\mathcal{F}(f;m)$$  Then
$$
|\mathcal{F}(f;m)|
=|\phi^{-1}_{m}(\varepsilon_0)|\frac{|\aut_{\Lambda}(E,m)|}{|\hom(A,S)|}|\hom(A,\mathrm{Im}f
)|,
$$
$$
|\mathcal{F}(f)|
=|\aut_{k}(E)|\frac{|\ext^1(A,D)|}{|\hom(A,D)|}.
$$

Let $\mo(E,m)$ be the set of $(B,D,e_1,e_2,e_3,e_4,c,d)$ such that
the following diagram is commutative and has exact rows and
columns:
\begin{equation}\label{5}
\xymatrix{ & 0 \ar[d] &&&&  0\ar[d] && \\
0 \ar[r] & D \ar[rr]^-{e_1} \ar[dd]^-{u'} && Y\ar[rr]^-{e_2} \ar@{.>}[dl]^-{u_Y}\ar@{.>}[dd]&& B \ar[r] \ar[dd]^-{x}& 0\\
&&S\ar@{.>}[dr]^-{d}&&&&\\
& N \ar@{.>}[ur]_-{u_N}\ar[dd]^-{v'} \ar@{.>}[rr]&& (E,m)\ar@{.>}[rr]\ar@{.>}[dd]\ar@{.>}[dr]^-{c}&& M \ar[dd]^-{y} &\\
&&&&T\ar@{.>}[ur]_-{q_M}\ar@{.>}[dl]^-{q_X}&&\\
0\ar[r] &  C \ar[d] \ar[rr]^-{e_3} && X \ar[rr]^-{e_4} && A \ar[r] \ar[d] & 0\\
& 0 && && 0 &}
\end{equation}
where the maps $q_X,u_Y$ and $q_M,u_N$
are naturally induced. In fact, the long exact sequence
\eqref{longexactsequence} has the following explicit form:
\begin{equation}\label{longexactsequence2}
\xymatrix{0\ar[r]&
D\ar[r]^{u_Ye_1}&S\ar[r]^{cd}&T\ar[r]^{e_4q_X}&A\ar[r]& 0}
\end{equation}

$$
|\mo(E,m)|=\sum_{\alpha,\beta,\gamma,\delta}|\phi^{-1}_{m}(\varepsilon_0)|\frac{|\aut_{\Lambda}(E,m)|}{|\hom(A,S)|}|\hom(A,\mathrm{Im}
f)|g_{\gamma\alpha}^{\xi}g_{\gamma\delta}^{\xi'}g_{\delta\beta}^{\eta}g_{\alpha\beta}^{\eta'}a_{\alpha}a_{\beta}a_{\delta}a_{\gamma}
$$
Let $\mo(X,Y,M,N)=\bigcup_{m:\Lambda\rightarrow \mathrm{End}
_kE}\mo(E,m)$
$$
|\mo(X,Y,M,N)|=|\aut_{k}(E)|\sum_{\alpha,\beta,\gamma,\delta}\frac{|\ext^1(A,D)|}{|\hom(A,D)|}g_{\gamma\alpha}^{\xi}g_{\gamma\delta}^{\xi'}g_{\delta\beta}^{\eta}g_{\alpha\beta}^{\eta'}a_{\alpha}a_{\beta}a_{\delta}a_{\gamma}
$$

The group $\aut_{\Lambda} (E,m)$ naturally acts on
$\mo(E,m)$ and $\mo(X,Y,M,N)$ as follows:
$$
g.(B,D,e_1,e_2,e_3,e_4,c,d)=(B,D,e_1,e_2,e_3,e_4,cg^{-1},gd)
$$
We denote the orbit space by $\mo(E,m)^{*}$ and
$\mo(X,Y,M,N)^{*}.$ The orbit of $(B,D,e_1,e_2,e_3,e_4,c,d)$ is
denoted by $(B,D,e_1,e_2,e_3,e_4,c,d)^{*}.$ Then
$$
|\mo(X,Y,M,N)^{*}|=\sum_{\alpha,\beta,\gamma,\delta}|\ext^1(A,D)|g_{\gamma\alpha}^{\xi}g_{\gamma\delta}^{\xi'}g_{\delta\beta}^{\eta}g_{\alpha\beta}^{\eta'}a_{\alpha}a_{\beta}a_{\delta}a_{\gamma}
$$
Similar to that on $\md(X,Y,M,N)^{*},$ there is an action of $\aut
X\times
\aut Y$ on $\mo(X,Y,M,N)^{*}$ given by
$$
(g_1,g_2).{(B,D,e_1,e_2,e_3,e_4,c,d)}^{*}=(B,D,g_2e_1,e_2g_2^{-1},g_1e_3,e_4g_1^{-1},c',d')^{*}
$$
Let us determine the relation between $(c',d')$ and $(c,d).$

It is clear that there are isomorphisms:
$$a_1: S\rightarrow S' \quad \mbox{and} \quad a_2: T\rightarrow T'$$
induced by isomorphisms:
$$
\left(%
\begin{array}{cc}
  g_2 & 0\\
  0& id \\
\end{array}%
\right): Y\oplus N\rightarrow Y\oplus N\quad \mbox{and} \quad \left(%
\begin{array}{cc}
  g_1 & 0\\
  0& id \\
\end{array}%
\right): X\oplus M\rightarrow X\oplus M
$$
Hence, $c'=a_2c,d'=da_1^{-1}.$

The stabilizer of
 $(B,D,e_1,e_2,e_3,e_4,c,d)^{*}$ is denoted
by $G((B,D,e_1,e_2,e_3,e_4,c,d)^{*}),$ which is
\begin{eqnarray}
  \{(g_1,g_2)\in \aut X\times \aut Y
 &\mid& g_1\in e_3\hom(A,C)e_4, g_2\in e_1\hom(B,D)e_2, \nonumber\\
   && cg^{-1}=c',gd=d' \mbox{ for some }g\in \aut (E,m)\}\nonumber
\end{eqnarray}

The orbit space is denoted by $\mo(X,Y,M,N)^{\wedge}$ and the
orbit is denoted by $(B,D,e_1,e_2,e_3,e_4,c,d)^{\wedge}.$ Then
$$
\frac{1}{a_Xa_Y}|\mo(X,Y,M,N)^{*}|=\sum_{(B,D,e_1,e_2,e_3,e_4,c,d)^{\wedge}\in
\mo(X,Y,M,N)^{\wedge}}\frac{1}{|G((B,D,e_1,e_2,e_3,e_4,c,d)^{*})|}
$$

\subsection{}
There is a bijection $\Omega:Q(E,m)\rightarrow \mo(E,m)$ which
induces Green's formula.
 In the same way, we also have the
following proposition
\begin{Prop}
There exist bijections $\Omega^{*}: Q(E,m)^{*}\rightarrow
\mo(E,m)^{*}$ and $\Omega^{\wedge}: Q(E,m)^{\wedge}\rightarrow
\mo(E,m)^{\wedge}.$
\end{Prop}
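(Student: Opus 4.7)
The plan is to construct $\Omega^{*}$ and $\Omega^{\wedge}$ by verifying that the set-theoretic bijection $\Omega \colon Q(E,m) \to \mo(E,m)$ of Ringel--Green, which already underlies the classical Green formula, is equivariant with respect to the two successive group actions, and therefore descends to bijections of the orbit sets. I first recall the construction of $\Omega$: given a cross $(a,b,a',b') \in Q(E,m)$, set $D = a^{-1}(a'(Y)) \subseteq N$ and $B = b(a'(Y)) \subseteq M$, with $A = M/B$ and $C = N/D$; the four short exact sequences appearing in diagram \eqref{E:gurz} can be read directly off the cross, the map $d \colon S \to (E,m)$ is the one induced by the universal property of the pushout $S = Y \sqcup_{D} N$ applied to $a'$ and $a$, and dually $c \colon (E,m) \to T$ comes from the universal property of the pullback $T = X \times_{A} M$ applied to $b$ and $b'$. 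This defines $\Omega(a,b,a',b') = (B,D,e_1,e_2,e_3,e_4,c,d)$, and its inverse recovers the cross by composing $d$ with the canonical maps $Y \to S$ and $N \to S$, and $c$ with $T \to X$ and $T \to M$.

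For $\Omega^{*}$, I would check that the $\aut_{\Lambda}(E,m)$-action $g.(a,b,a',b') = (ga, bg^{-1}, ga', b'g^{-1})$ preserves each of the composites $b \circ a'$ and $b' \circ a$, so the submodules $B$, $D$ (and hence $A$, $C$) together with the maps $e_1,\ldots,e_4$ are all unchanged. A direct inspection of the universal properties then shows that the transported pushout/pullback maps satisfy $d \mapsto g \circ d$ and $c \mapsto c \circ g^{-1}$, which matches exactly the declared $\aut_{\Lambda}(E,m)$-action on $\mo(E,m)$. Hence $\Omega$ is equivariant and passes to a bijection $\Omega^{*} \colon Q(E,m)^{*} \to \mo(E,m)^{*}$.

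For $\Omega^{\wedge}$, the main technical obstacle is the equivariance of $\Omega^{*}$ under $(g_1,g_2) \in \aut X \times \aut Y$. The source action is $(a,b,a',b') \mapsto (a, b, a'g_2^{-1}, g_1 b')$; a short diagram chase through the cross shows $e_1 \mapsto g_2 e_1$, $e_2 \mapsto e_2 g_2^{-1}$, $e_3 \mapsto g_1 e_3$ and $e_4 \mapsto e_4 g_1^{-1}$, which is precisely the action prescribed on $(B,D,e_1,\ldots,e_4)$. The subtle step is to identify the transformation of the pair $(c,d)$: the block-diagonal automorphisms $\mathrm{diag}(g_2, \mathrm{id}_N)$ of $Y \oplus N$ and $\mathrm{diag}(g_1, \mathrm{id}_M)$ of $X \oplus M$ descend to isomorphisms $a_1 \colon S \to S'$ and $a_2 \colon T \to T'$ between the old and new pushout and pullback; applying the universal properties to $a'g_2^{-1}$, $a$ and to $g_1 b'$, $b$ gives $d' = d \circ a_1^{-1}$ and $c' = a_2 \circ c$, which is exactly the recipe stated in the text just before the stabilizer formula. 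This confirms that $\Omega^{*}$ is $(\aut X \times \aut Y)$-equivariant on orbit sets modulo $\aut_{\Lambda}(E,m)$, and $\Omega^{\wedge} \colon Q(E,m)^{\wedge} \to \mo(E,m)^{\wedge}$ is the induced bijection.
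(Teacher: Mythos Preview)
Your proposal is correct and follows essentially the same route as the paper: recall the Ringel--Green bijection $\Omega$ explicitly, then verify equivariance first under $\aut_{\Lambda}(E,m)$ (yielding $\Omega^{*}$) and then under $\aut X \times \aut Y$ (yielding $\Omega^{\wedge}$), with the same identification $c' = a_2 c$, $d' = d a_1^{-1}$ via the block-diagonal isomorphisms on $Y\oplus N$ and $X\oplus M$. The only cosmetic difference is that the paper writes $D=\ker b'a$, $B=\operatorname{Im} ba'$ and lists the $e_i$ as $a'^{-1}a,\,ba',\,b'a,\,bb'^{-1}$, while you describe the same data via $D=a^{-1}(a'(Y))$, $B=b(a'(Y))$ and invoke the universal properties of the pushout/pullback; these are identical.
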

\begin{proof}
For any $(a,b,a',b')\in Q(E,m),$
$$
\Omega(a,b,a',b')=(\ker b'a,\Im ba',a'^{-1}a, ba',b'a,
bb'^{-1},c,d)
$$
where $c,d$ are defined by
$$
du_N=a,du_Y=a',q_Mc=b,q_Xc=b'
$$
Hence,
\begin{eqnarray}
  \Omega
(g.(a,b,a',b')) &=& (ga,bg^{-1},ga',b'g^{-1}) \nonumber \\
   &=&  (\ker b'a,\Im ba',a'^{-1}a,
ba',b'a, bb'^{-1},cg^{-1},gd)\nonumber \\
  &=& g.(\ker b'a,\Im ba',a'^{-1}a, ba',b'a, bb'^{-1},c,d)\nonumber
\end{eqnarray}
i.e.
$$
\Omega^{*}((a,b,a',b')^{*})=((\ker b'a,\Im ba',a'^{-1}a, ba',b'a,
bb'^{-1},c,d))^{*}
$$
for $g\in \aut(E,m).$ Similarly,
\begin{eqnarray}
  \Omega^{*}
((g_1,g_2).(a,b,a',b')^{*}) &=& (a,b,a'g_2^{-1},g_1b')^{*} \nonumber \\
   &=&  (\ker g_1b'a,\Im ba'g_2^{-1},g_2a'^{-1}a,
ba'g_2^{-1},g_1b'a, bb'^{-1}g_1^{-1},c',d')^{*}\nonumber \\
  &=& (g_1,g_2).(\ker b'a,\Im ba',a'^{-1}a, ba',b'a, bb'^{-1},c,d)^{*}\nonumber
\end{eqnarray}
for  $(g_1,g_2)\in \aut X\times \aut Y.$ Hence
$$
\Omega^{\wedge}((a,b,a',b')^{\wedge})=((\ker b'a,\Im ba',a'^{-1}a,
ba',b'a, bb'^{-1},c,d))^{\wedge}
$$
\end{proof}

In particular, if $(a,b,a',b')^{*}$ corresponds to
$(B,D,e_1,e_2,e_3,e_4,c,d)^{*},$ then
$$
G((a,b,a',b')^{*})=G((B,D,e_1,e_2,e_3,e_4,c,d)^{*})
$$
 We also give the following variant of Green's formula, which is
 suggestive for the  projective Green's formula over the complex numbers in the next section.

\begin{eqnarray*}
 \lefteqn{\sum_{\lambda; \lambda\neq \xi'\oplus
\eta'}\frac{1}{q-1}h^{\xi'\eta'}_{\lambda}g_{\xi\eta}^{\lambda}=}\\
   && \sum_{\alpha,\beta,\gamma,\delta;\alpha\oplus \gamma\neq \xi
\mbox{ or }\beta\oplus \delta\neq
\eta}\frac{|\ext^1(A,D)||\hom(M,N)|}{|\hom(A,D)||\hom(A,C)||\hom(B,D)|}\frac{1}{q-1}h^{\gamma\alpha}_{\xi}h^{\delta\beta}_{\eta}g_{\gamma\delta}^{\xi'}g_{\alpha\beta}^{\eta'}\\
   && +\sum_{\alpha,\beta,\gamma,\delta;\alpha\oplus
\gamma=\xi,\beta\oplus \delta=
\eta}\frac{1}{q-1}(\frac{|\ext^1(A,D)||\hom(M,N)|}{|\hom(A,D)||\hom(A,C)||\hom(B,D)|}-1)g_{\gamma\delta}^{\xi'}g_{\alpha\beta}^{\eta'}\\
   && +\frac{1}{q-1}(\sum_{\alpha,\beta,\gamma,\delta;\alpha\oplus
\gamma=\xi,\beta\oplus \delta=
\eta}g_{\gamma\delta}^{\xi'}g_{\alpha\beta}^{\eta'}-g_{\xi\eta}^{\xi'\oplus
\eta'})
\end{eqnarray*}

\bigskip

\section{Green's formula over the complex numbers}\label{complex}

\subsection{}
From now on, we consider  $A=\bbc Q,$ where $\bbc$ is the field of
complex
numbers. Let $\mo_1,\mo_2$ be $G$-invariant constructible
subsets in $\bbe_{\ud_1}(Q),\bbe_{\ud_2}(Q),$ respectively, and
let $\ud = \ud_1 + \ud_2$.
Define
$$\mv(\mathcal{O}_1,\mathcal{O}_2;L)=\{0=X_0\subseteq
X_1\subseteq X_2=L\mid X_i\in \mod
\ A,X_1 \in \mathcal{O}_2,
 \mbox{  and  } L/X_1\in \mathcal{O}_1\}.$$
where $L\in \bbe_{\ud}(Q).$  In particular, when $\mo_1,\mo_2$ are
the orbits of $A$-modules $X,Y$ respectively, we write $\mv(X,Y;L)$
instead of $\mv(\mo_1,\mo_2;L).$

Let $\alpha$ be the image of $X$  in
$\bbe_{\ud_{\alpha}}(Q)/G_{\ud_{\alpha}}.$  We write $X\in \alpha$,
sometimes we also use the notation $\overline{X}$ to denote the
image of $X$ and the notation $V_{\alpha}$ to denote a
representative of $\alpha.$ Instead of $\ud_{\alpha},$ we use
$\underline{\alpha}$ to denote the dimension vector of $\alpha.$ Put
$$
g_{\alpha\beta}^{\lambda}=\chi(\mv(X,Y;L))
$$ for $X\in\alpha, Y\in\beta$  and $L\in\lambda.$
Both are well-defined and independent of  the choice of objects in
the
%
orbits.
\begin{Definition}\cite{Riedtmann}\label{grassmannian}
For any $L\in \mathrm{mod}\
A,$ let $L=\bigoplus_{i=1}^{r}L_i$ be the
decomposition into indecomposables, then an action of $\bbc^{*}$
on $L$ is defined by
$$
t.(v_1,\cdots,v_r)=(tv_1,\cdots,t^rv_r)
$$
for $t\in \bbc^{*}$ and $v_i\in L_i$ for
$i=1,\cdots,r.$

It induces an action of $\bbc^{*}$ on $\mv(X,Y;L)$ for any
$A$-modules
$X,Y$ and $L.$ Let $(X_1\subseteq L)\in \mv(X,Y;L)$ and $t.X_1$ be
the action of $\bbc^{*}$ on $X_1$ as above under the decomposition
of
$L,$ then there is a natural  isomorphism between $A$-modules
$t_{X_1}: X_1\simeq t.X_1.$ Define $t.(X_1\subseteq
L)=(t.X_1\subseteq L).$
\end{Definition}

Let $D(X,Y)$ be the vector space over $\bbc$ of all tuples
$d=(d(\alpha))_{\alpha\in Q_1}$ such that each linear map
$d(\alpha)$ belongs to
$\mathrm{Hom}_{\bbc}(X_{s(\alpha)},Y_{t(\alpha)}).$
Define $\pi: D(X,Y)\rightarrow \mathrm{Ext}^1(X,Y)$ by sending $d$
to the short exact sequence
$$
\xymatrix{\varepsilon:\quad 0\ar[r]& Y\ar[rr]^{\left(%
\begin{array}{c}
  1 \\
  0 \\
\end{array}%
\right)}&&L(d)\ar[rr]^{\left(%
\begin{array}{cc}
  0 & 1 \\
\end{array}%
\right)}&&X\ar[r]&0}
$$
where $L(d)$
is the direct sum of $X$ and $Y$ as a vector space and for
any $\alpha\in Q_1,$
$$
L(d)_{\alpha}=\left(%
\begin{array}{cc}
  Y_{\alpha} & d(\alpha) \\
  0 & X_{\alpha} \\
\end{array}%
\right)
$$
Fix
a vector space decomposition $D(X,Y)=\mathrm{Ker}\pi\oplus
E(X,Y),$ then we can identify $\mathrm{Ext}^{1}(X,Y)$ with
$E(X,Y)$ ( see \cite{Riedtmann}, \cite{DXX} or \cite{GLS} ). There
is a natural $\bbc^{*}$-action on $E(X,Y)$ given by
$t.d=(td(\alpha))$ for any $t\in \bbc^{*}.$ This induces an action
of $\bbc^{*}$ on $\mathrm{Ext}^{1}(X,Y).$ By the isomorphism of
$\bbc Q$-modules between $L(d)$ and $L(t.d),$ $t.\varepsilon$ is
the
following short exact sequence:
$$
\xymatrix{0\ar[r]& Y\ar[rr]^{\left(%
\begin{array}{c}
  t \\
  0 \\
\end{array}%
\right)}&&L(d)\ar[rr]^{\left(%
\begin{array}{cc}
  0 & 1 \\
\end{array}%
\right)}&&X\ar[r]&0}
$$
for any $t\in \bbc^{*}.$ Let $\mathrm{Ext}^{1}(X,Y)_{L}$ be the
subset of $\mathrm{Ext}^{1}(X,Y)$ of the equivalence classes of
short exact sequences whose middle term is isomorphic
to $L$. Then
$\mathrm{Ext}^{1}(X,Y)_{L}$ can be viewed as a constructible
subset of $\mathrm{Ext}^1(X,Y)$ under the
identification between $\mathrm{Ext}^1(X,Y)$ and $E(X,Y).$ Put
$$
h_{\lambda}^{\alpha\beta}=\chi(\mathrm{Ext}^{1}_{A}(X,Y)_{L})
$$
for $X\in \alpha, Y\in \beta$ and $L\in \lambda.$ The following is
known, for example, see \cite{DXX}.
\begin{Lemma}\label{directsum}
For $A,B,X\in \mathrm{mod}
\Lambda,$ $\chi(\mathrm{Ext}
_{\Lambda}^{1}(A,B)_{X})=0$
unless $X\simeq A\oplus B.$
\end{Lemma}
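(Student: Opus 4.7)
The plan is to exploit the canonical $\bbc^{*}$-action on $\mathrm{Ext}^{1}(A,B)$ described just before the lemma. Under the identification $\mathrm{Ext}^{1}(A,B)\cong E(A,B)$ the action is ordinary scalar multiplication on a vector space, and the preceding paragraph shows that $t\cdot\varepsilon$ fits into the same short exact sequence as $\varepsilon$ after adjusting the inclusion of $B$ by the scalar $t$. The key first step is to verify that $\mathrm{Ext}^{1}(A,B)_{X}$ is preserved by this action. For this, one observes that $L(d)$ and $L(t\cdot d)$ are isomorphic as $\bbc Q$-modules (via the isomorphism $\mathrm{diag}(1,t^{-1})$ or equivalently via the explicit equivalence of extensions exhibited above), so the isomorphism class of the middle term is constant along $\bbc^{*}$-orbits.

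Next, I would identify the $\bbc^{*}$-fixed locus inside $\mathrm{Ext}^{1}(A,B)$. Since the action on $E(A,B)$ is scalar multiplication on a $\bbc$-vector space, the unique fixed point is the zero element, which corresponds to the split extension with middle term $A\oplus B$. Therefore, if $X\not\simeq A\oplus B$, the constructible subset $\mathrm{Ext}^{1}(A,B)_{X}$ avoids the zero vector and hence the induced $\bbc^{*}$-action on it has no fixed points.

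The final step is the standard principle that a constructible variety admitting a $\bbc^{*}$-action with no fixed points must have Euler characteristic zero. To deduce this in the framework already developed in the paper, I would apply Lemma \ref{Rosenlicht} to stratify $\mathrm{Ext}^{1}(A,B)_{X}$ into finitely many locally closed $\bbc^{*}$-stable pieces $U_{i}$, each with a geometric $\bbc^{*}$-quotient $\phi_{U_{i}}:U_{i}\to U_{i}/\bbc^{*}$. Since the $\bbc^{*}$-action on the ambient vector space $E(A,B)$ has trivial stabilizer at every nonzero point, every fiber of $\phi_{U_{i}}$ is a full $\bbc^{*}$-orbit isomorphic to $\bbc^{*}$. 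Proposition \ref{Euler}(2) then gives $\chi(U_{i})=\chi(\bbc^{*})\cdot\chi(U_{i}/\bbc^{*})=0$, and Proposition \ref{Euler}(1) yields $\chi(\mathrm{Ext}^{1}(A,B)_{X})=\sum_{i}\chi(U_{i})=0$.

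The only real point requiring care is the first step: making the assertion \emph{precise} that the $\bbc^{*}$-action on the vector space $E(A,B)$, which a priori changes the module structure on the middle term via scaling of $d(\alpha)$, actually leaves the isomorphism class of the middle term invariant. This is settled by the explicit equivalence of extensions displayed in the paragraph preceding the lemma, so the rest is an application of the Euler-characteristic formalism already set up in Section \ref{basic}.
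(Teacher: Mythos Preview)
Your proof is correct and is exactly the standard argument; the paper itself does not give a proof of this lemma but simply cites \cite{DXX} (and implicitly \cite{Riedtmann}), where one finds essentially the same $\bbc^{*}$-action reasoning you outline.
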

We remark that both $\mv(X,Y;L)$ and $\mathrm{Ext}
^1(X,Y)_{L}$ can be viewed
as the orbit spaces of
$$
W(X,Y;L):=\{(f,g)\mid \xymatrix{0\ar[r]&Y\ar[r]^{f}&L\ar[r]^{g}&
X\ar[r]& 0}~~\mbox{is an exact sequence}\}
$$
under the actions of $G_{\underline{\alpha}}\times
G_{\underline{\beta}}$ and $G_{\underline{\lambda}}$ respectively,
for $X\in \alpha, Y\in \beta$ and $L\in \lambda.$

\bigskip
\subsection{}
 For fixed $\xi, \eta, \xi', \eta',$ consider the following
canonical embedding:
\begin{equation}\label{decom}
\bigcup_{\alpha,\beta,\gamma,\delta;\alpha\oplus
\gamma=\xi,\beta\oplus \delta= \eta}\mv(V_{\alpha},V_{\beta};
V_{\eta'})\times \mv(V_{\gamma},V_{\delta};V_{\xi'})\xrightarrow{i}
\mv(V_{\xi},V_{\eta};V_{\xi'}\oplus V_{\eta'})
\end{equation}
sending $(V^1_{\eta'}\subseteq V_{\eta'},V^1_{\xi'}\subseteq
V_{\xi'})$ to $(V^1_{\xi'}\oplus V^1_{\eta'}\subseteq
V_{\xi'}\oplus V_{\eta'})$ in a natural way. We set
$$
\overline{\mv}(V_{\xi},V_{\eta};V_{\xi'}\oplus
V_{\eta'}):=V(V_{\xi},V_{\eta};V_{\xi'}\oplus V_{\eta'})\backslash
\mbox{Im}i
$$
i.e.
\begin{equation}\label{decomposition}
\mv(V_{\xi},V_{\eta};V_{\xi'}\oplus
V_{\eta'})=\overline{\mv}(V_{\xi},V_{\eta};V_{\xi'}\oplus
V_{\eta'})\bigcup \mv_1
\end{equation}
where $\mv_1=\mbox{Im}i.$ Define
$$\mv_1(\delta,\beta):=\mbox{Im}(\mv(V_{\alpha},V_{\beta};
V_{\eta'})\times \mv(V_{\gamma},V_{\delta};V_{\xi'}))$$
\bigskip

Consider the $\bbc$-space
$M_G(A)=\bigoplus_{\ud\in\bbn^n}M_{G_{\ud}}(Q)$ where
$M_{G_{\ud}}(Q)$ is the $\bbc$-space of $G_{\ud}$-invariant
constructible function on $\bbe_{\ud}(Q).$ Define the convolution
multiplication on $M_G(A)$ by
$$f\bullet
g(L)=\sum_{c,d\in\bbc}\chi(\mv(f^{-1}(c),g^{-1}(d);L))cd.$$ for any
$f\in M_{G_{\ud}}(Q),g\in M_{G_{\ud'}}(Q)$ and $L\in
\bbe_{\ud+\ud'}.$

 As usual
for an algebraic variety $V$ and a constructible function $f$ on
$V,$  using the notation \eqref{integral} in Section \ref{basic},
we have
$$
f\bullet g(L)=\int_{\mv(supp(f),supp(g);L)}f(x')g(x'').
$$
The following is well-known (see \cite{Lusztig},\cite{Riedtmann}),
see a proof in \cite{DXX}.

\begin{Prop} The space $M_G(\Lambda)$ under the convolution
multiplication $\bullet$ is an associative $\bbc$-algebra with
unit element.
\end{Prop}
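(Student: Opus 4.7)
The plan is to prove associativity by identifying both iterated triple products $(f\bullet g)\bullet h(L)$ and $f\bullet(g\bullet h)(L)$ with a common integral over the variety of length-three filtrations of $L$, and then to exhibit an explicit two-sided unit.

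Concretely, for $L\in\bbe_{\bd}(Q)$ with $\bd=\ud_1+\ud_2+\ud_3$, I would introduce the constructible variety of flags
$$\mv^{(3)}=\{0\subseteq X_1\subseteq X_2\subseteq L\mid \udim X_1=\ud_3,\ \udim(X_2/X_1)=\ud_2\},$$
together with two forgetful morphisms: $p_{12}$ deleting $X_1$ and $p_{23}$ deleting $X_2$, each landing in a two-step filtration variety of the form $\mv(\cdot,\cdot;L)$. The fibre of $p_{12}$ over $(X_2\subseteq L)$ is a filtration variety of $X_2$, and the fibre of $p_{23}$ over $(X_1\subseteq L)$ is a filtration variety of $L/X_1$. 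Unfolding $(f\bullet g)\bullet h(L)$ by the definition of $\bullet$, first over $X_1\subseteq L$ and then over $X_2/X_1\subseteq L/X_1$, should give
$$\int_{(X_1\subseteq X_2\subseteq L)\in\mv^{(3)}} f(L/X_2)\,g(X_2/X_1)\,h(X_1),$$
and unfolding $f\bullet(g\bullet h)(L)$, first over $X_2\subseteq L$ and then over $X_1\subseteq X_2$, yields the same integral. Associativity will follow.

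The main technical obstacle is the Fubini-type passage between the iterated integrals and the single integral over $\mv^{(3)}$. Since $f,g,h$ are $G$-invariant, the value $(f\bullet g)(L/X_1)$ depends only on the isomorphism class of the quotient, so that $X_1\mapsto (f\bullet g)(L/X_1)\cdot h(X_1)$ is a well-defined constructible function on the submodule variety inside $\mv(\cdot,\cdot;L)$. The equality of the iterated integral with the integral over $\mv^{(3)}$ then comes from applying the functoriality $(\psi\circ\phi)_{*}=\psi_{*}\circ\phi_{*}$ of Theorem \ref{Joyce} to the composite $\mv^{(3)}\to \mv(\cdot,\cdot;L)\to \{\mathrm{pt}\}$, after stratifying each forgetful projection into locally trivial fibrations so that Proposition \ref{Euler}(2) yields the right Euler-characteristic bookkeeping on every stratum.

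For the unit, take $\mathbf{1}_{\{0\}}\in M_{G_0}(Q)$, the characteristic function of the unique orbit in $\bbe_0(Q)=\{0\}$. Since the filtration varieties $\mv(\{0\},L;L)$ and $\mv(L,\{0\};L)$ each consist of a single trivial flag, the definition of $\bullet$ gives immediately $\mathbf{1}_{\{0\}}\bullet f(L)=f(L)=f\bullet\mathbf{1}_{\{0\}}(L)$, so $\mathbf{1}_{\{0\}}$ is a two-sided unit.
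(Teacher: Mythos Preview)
Your proposal is correct and follows the standard argument. Note, however, that the paper does not actually supply a proof of this proposition: it states the result as well-known and refers the reader to \cite{Lusztig}, \cite{Riedtmann}, and \cite{DXX}. Your flag-variety argument via the three-step filtration space $\mv^{(3)}$ and the pushforward functoriality of Theorem~\ref{Joyce} is precisely the approach one finds in those references (especially \cite{Riedtmann} and \cite{DXX}), so there is nothing to compare against within the paper itself.

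One minor remark: the appeal to ``stratifying each forgetful projection into locally trivial fibrations'' is unnecessary and slightly misleading. You do not need local triviality; the identity $(\psi\circ\phi)_{*}=\psi_{*}\circ\phi_{*}$ from Theorem~\ref{Joyce} applied to the constructible function $(X_1\subseteq X_2\subseteq L)\mapsto f(L/X_2)g(X_2/X_1)h(X_1)$ along the two factorizations $\mv^{(3)}\to\{\mathrm{pt}\}$ already gives the Fubini identity directly, once you observe that the fibre of $p_{23}$ over $(X_1\subseteq L)$ is canonically a Grassmannian in $L/X_1$ via the third isomorphism theorem.
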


The above proposition implies the following identity
\begin{Thm}
For fixed $A$-modules $X,Y,Z$ and $M$ with dimension vectors
$\ud_X,\ud_Y,\ud_Z$ and $\ud_M$ such that
$\ud_M=\ud_X+\ud_Y+\ud_Z,$ we have
$$
\int_{\overline{L}\in \bbe_{\ud_X+\ud_Y}(A)/G_{\ud_X+\ud_Y}
}g_{XY}^{L}g_{LZ}^{M}=\int_{\overline{L'}\in
\bbe_{\ud_Y+\ud_Z}(A)/G_{\ud_Y+\ud_Z}}g_{XL'}^{M}g_{YZ}^{L'}
$$
\end{Thm}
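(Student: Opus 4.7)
The plan is to deduce the identity directly from the associativity of $\bullet$ by applying it to the characteristic functions of three orbits. Set $f_X = 1_{\mathcal{O}_X}$, $f_Y = 1_{\mathcal{O}_Y}$, $f_Z = 1_{\mathcal{O}_Z}$, where $\mathcal{O}_X,\mathcal{O}_Y,\mathcal{O}_Z$ are the $G$-orbits of $X,Y,Z$ in the respective module varieties. Since $f_X$ and $f_Y$ take only the values $0,1$, only the $(c,d)=(1,1)$ term in the defining sum of $(f_X\bullet f_Y)(L)$ survives, so
$$
(f_X\bullet f_Y)(L)=\chi\bigl(\mv(\mathcal{O}_X,\mathcal{O}_Y;L)\bigr)=g_{XY}^L
$$
is a $G$-invariant constructible function of $L\in\bbe_{\ud_X+\ud_Y}(Q)$, and similarly $(f_Y\bullet f_Z)(L')=g_{YZ}^{L'}$.

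Next I would unfold $((f_X\bullet f_Y)\bullet f_Z)(M)$. Again only $d=1$ contributes (forcing $M_1\in\mathcal{O}_Z$), so the expression equals $\sum_c c\cdot\chi\{M_1\subseteq M:M_1\in\mathcal{O}_Z,\ g_{XY}^{M/M_1}=c\}$. Since the function $L\mapsto g_{XY}^L$ is $G$-invariant, it descends to a function on $\bbe_{\ud_X+\ud_Y}/G_{\ud_X+\ud_Y}$, and I would stratify the space of admissible $M_1$ through the map $M_1\mapsto\overline{M/M_1}$. For each orbit $\bar L$ the preimage is exactly $\mv(L,Z;M)$, with Euler characteristic $g_{LZ}^M$, while the value of $g_{XY}^{M/M_1}$ is constant and equal to $g_{XY}^L$ on the preimage. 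Applying Proposition \ref{Euler2}(2) to the induced quotient map and rearranging the sum via the definition \eqref{integral2} of $\int_{\bar L\in\bbe/G}$, I would identify
$$
((f_X\bullet f_Y)\bullet f_Z)(M)=\int_{\overline{L}\in\bbe_{\ud_X+\ud_Y}(A)/G_{\ud_X+\ud_Y}}g_{XY}^L\,g_{LZ}^M.
$$
A symmetric computation, using the second projection through $M_2/M_1$-type data, gives
$$
(f_X\bullet(f_Y\bullet f_Z))(M)=\int_{\overline{L'}\in\bbe_{\ud_Y+\ud_Z}(A)/G_{\ud_Y+\ud_Z}}g_{XL'}^M\,g_{YZ}^{L'}.
$$
Equating the two by associativity of $\bullet$ (the preceding proposition) yields the claimed identity.

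The main technical hurdle is the passage from the raw definition $(f\bullet g)(L)=\sum_{c,d}cd\,\chi(\mv(f^{-1}(c),g^{-1}(d);L))$ to the Euler-characteristic integral over the \emph{orbit space} $\bbe_{\ud_X+\ud_Y}(A)/G_{\ud_X+\ud_Y}$; one has to check that the stratification of the space of submodules of $M$ induced by the assignment $M_1\mapsto\overline{M/M_1}$ is a $G$-equivariant constructible partition, so that the quasi-Euler-Poincaré characteristic of each stratum is well-defined and equals $g_{LZ}^M$. Once this bookkeeping is in place, the argument is a clean application of associativity; no further geometric input beyond Proposition \ref{Euler} and Proposition \ref{Euler2} is needed.
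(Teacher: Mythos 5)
Your proposal is correct and follows exactly the paper's intended route: the paper does not write out a proof but introduces the theorem with ``The above proposition implies the following identity,'' i.e.\ it is precisely the associativity of the convolution $\bullet$ applied to the characteristic functions $1_{\mathcal{O}_X}$, $1_{\mathcal{O}_Y}$, $1_{\mathcal{O}_Z}$, unwound as you have done. The bookkeeping you flag at the end (descending $g_{XY}^{M/M_1}$ through the orbit-space stratification furnished by Lemma~\ref{Rosenlicht} and identifying each fiber of $M_1\mapsto\overline{M/M_1}$ with $\mv(L,Z;M)$, then applying Proposition~\ref{Euler2}) is indeed the only technical content, and it is exactly the argument the paper leaves implicit.
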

Define
$$
\hspace{-9cm}W(X,Y;L_1,L_2):=$$$$\{(f,g,h)\mid
\xymatrix{0\ar[r]&Y\ar[r]^{f}&L_1\ar[r]^{g}&L_2\ar[r]^{h}&
X\ar[r]& 0}~~\mbox{is an exact sequence}\} .$$ Under the action
of $G_{\underline{\alpha}}\times G_{\underline{\beta}},$ where
$\underline{\alpha}=\udim X$ and $\underline{\beta}=\udim Y,$ the
orbit space is denoted by $\mv(X,Y;L_1,L_2).$ In fact,
$$
\mv(X,Y;L_1,L_2)=\{g:L_1\rightarrow L_2\mid \mathrm{Ker}g\cong Y
\mbox{ and } \mathrm{Coker}g\cong X\}
$$
Put
$$
h_{XY}^{L_1L_2}=\chi(\mv(X,Y;L_1L_2))
$$
We have the following ``higher order'' associativity.
\begin{Thm}\label{associativity}
For fixed $A$-modules $X,Y_i,L_i$ for $i=1,2,$ we have
$$
\int_{\overline{Y}}g_{Y_2Y_1}^{Y}h_{XY}^{L_1L_2}=\int_{\overline{L'_1}}
g_{L'_1Y_1}^{L_1}h_{XY_2}^{L'_1L_2}.
$$
Dually, for fixed $A$-modules $X_i,Y,L_i$ for $i=1,2,$ we have
$$
\int_{\overline{X}}g_{X_2X_1}^{X}h_{XY}^{L_1L_2}=\int_{\overline{L'_2}}
g_{X_2L'_2}^{L_2}h_{X_1Y}^{L_1L'_2}.
$$
\end{Thm}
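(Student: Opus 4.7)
The plan is to realize both sides as the Euler characteristic of a single variety of ``decorated 4-term exact sequences,'' via two canonical fibrations, combining the pushforward formalism of Theorem \ref{Joyce} with the Euler--characteristic--of--quotient machinery from Corollary \ref{euler3}.

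Fix representatives of the given isomorphism classes and consider
$$\cz = \bigl\{\,(g,\iota)\,\bigm|\, g:V_{L_1}\to V_{L_2},\ \cok g\cong V_X,\ \iota:V_{Y_1}\hookrightarrow\ker g,\ (\ker g)/\iota(V_{Y_1})\cong V_{Y_2}\,\bigr\},$$
equipped with the natural action of $G:=\aut V_{L_1}\times\aut V_{L_2}\times\aut V_{Y_1}$, where the first two factors act by left/right composition on $g$ and the third by precomposition on $\iota$. We will evaluate $\chi(\cz/G)$ in two different ways.

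\emph{LHS computation.} The assignment $(g,\iota)\mapsto(g,\ker g)$ is a $G$-equivariant morphism from $\cz$ into the disjoint union, over isomorphism classes $[Y]$, of the varieties $\mv(X,Y;L_1,L_2)$. The locus with $\ker g\cong Y$ is a $G$-invariant constructible subset, so by Lemma \ref{Rosenlicht} one can stratify and take geometric quotients. The fiber over a point $(g,Y\hookrightarrow V_{L_1})$ is the variety of filtrations $V_{Y_1}\hookrightarrow Y$ with $Y/\iota(V_{Y_1})\cong V_{Y_2}$, whose $\aut V_{Y_1}$-quotient has Euler characteristic $g_{Y_2Y_1}^{Y}$. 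Applying the pushforward and Corollary \ref{euler3} yields
$$\chi(\cz/G)=\int_{\overline Y} g_{Y_2Y_1}^{Y}\,h_{XY}^{L_1L_2}.$$

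\emph{RHS computation.} Since $\iota(V_{Y_1})\subseteq\ker g$, the map $g$ factors uniquely as $\bar g\circ\pi_\iota$, where $\pi_\iota:V_{L_1}\twoheadrightarrow V_{L_1}/\iota(V_{Y_1})=:L'_1$ and $\bar g:L'_1\to V_{L_2}$ satisfies $\ker\bar g\cong V_{Y_2}$ and $\cok\bar g\cong V_X$. The correspondence $(g,\iota)\mapsto(\iota,\bar g)$ is a $G$-equivariant isomorphism $\cz\simeq\{(\iota,\bar g)\}$. Stratifying by $[L'_1]$ and pushing forward along $(\iota,\bar g)\mapsto\bar g$, the base sweeps out the disjoint union, over $[L'_1]$, of the varieties $\mv(X,Y_2;L'_1,L_2)$, while the fiber over $(L'_1,\bar g)$ parametrizes embeddings $\iota:V_{Y_1}\hookrightarrow V_{L_1}$ with $V_{L_1}/\iota(V_{Y_1})\cong L'_1$ and hence has Euler characteristic $g_{L'_1Y_1}^{L_1}$. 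Thus
$$\chi(\cz/G)=\int_{\overline{L'_1}} g_{L'_1Y_1}^{L_1}\,h_{XY_2}^{L'_1L_2},$$
and comparison with the previous display proves the first identity.

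The main technical obstacle is verifying that the induced map $(g,\iota)\mapsto\bar g$, together with the isomorphism class of the quotient $L'_1$, varies algebraically across each stratum, so that the decomposition is really by $G$-invariant constructible subsets and Corollary \ref{euler3} can be applied fiberwise. Granting this, the Euler characteristics multiply as claimed. The dual statement, in which one imposes instead a filtration $X_1\hookrightarrow X\cong\cok g$ on the cokernel side, follows by the mirror argument applied to the opposite algebra $A^{\mathrm{op}}$ (equivalently, by reversing all arrows in $Q$).
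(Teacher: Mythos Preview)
Your approach---realize both sides as the Euler characteristic of a single ``decorated'' variety, then fiber it in two ways---is exactly the paper's strategy. The paper's variety $EF(X,Y_1,Y_2;L_1,L_2)$ is your $\cz$ modulo $\aut V_{Y_1}$ (they record the submodule $Y'\subseteq\ker g$ rather than an embedding), and the paper's $EF'$ is the reparametrization by $(\iota,\bar g)$ that you describe. So the core idea is right.

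However, your choice of $G=\aut V_{L_1}\times\aut V_{L_2}\times\aut V_{Y_1}$ is incorrect, and the error is not cosmetic. The quantity $h_{XY}^{L_1L_2}$ is defined as $\chi(\mv(X,Y;L_1,L_2))$ with \emph{no} quotient by $\aut L_1\times\aut L_2$; likewise $g_{L'_1Y_1}^{L_1}$ is the Euler characteristic of a Grassmannian in $L_1$, not of its $\aut L_1$-quotient. Quotienting by the extra factors genuinely changes these numbers. For instance, take $L_1=L_2=S$ simple, $X=Y_1=Y_2=0$: then $\cz\cong\bbc^*$, so $\chi(\cz/\aut V_{Y_1})=\chi(\bbc^*)=0$, which matches $h_{00}^{SS}=\chi(\bbc^*)=0$; but your $\chi(\cz/G)$ is the Euler characteristic of a single point, namely $1$. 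So your displayed equalities for $\chi(\cz/G)$ are false as stated.

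The fix is simply to drop the $\aut V_{L_1}\times\aut V_{L_2}$ factors and work with $\cz/\aut V_{Y_1}$ (equivalently, with submodules of $\ker g$ rather than embeddings, as the paper does). With that correction your two fibrations and the pushforward argument go through verbatim and reproduce the paper's proof. Your remark about the dual statement via $A^{\mathrm{op}}$ is fine, and your caution about algebraicity of $(\iota,\bar g)\mapsto\bar g$ on strata is appropriate; the paper handles this by observing that the correspondence $(g,Y')\leftrightarrow(Y',\bar g)$ is a homeomorphism of constructible sets.
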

\begin{proof}
Define
$$EF(X,Y_1,Y_2;L_1,L_2)=\{(g,Y^{\bullet})\mid g: L_1\rightarrow L_2, Y^{\bullet}=(\mathrm{Ker}
g\supseteq Y'\supseteq 0)$$$$\mbox{ such that }\mathrm{Coker}
g\simeq X,Y'\simeq Y_1, \mathrm{Ker}
g/Y'\simeq Y_2\}$$
and
$$EF'(X,Y_1,Y_2;L'_1,L_2)=\{(g',L_1^{\bullet})\mid g': L'_1\rightarrow L_2, L^{\bullet}=(L_1\supseteq Y'\supseteq 0)$$$$\mbox{ such that }\mathrm{Ker}
g'\simeq Y_2,\mathrm{Coker}
g'\simeq X,Y'\simeq Y_1, L_1/Y'\simeq L'_1\}$$ Consider the following diagram
\begin{equation}
\xymatrix{&Y_1\ar[d]\ar@{=}[r]&Y_1\ar[d]& &&\\
0\ar[r]& \mathrm{Ker}
g\ar[r]\ar[d]& L_1\ar[r]^{g}\ar[d]&
L_2\ar[r]\ar@{=}[d]&
X\ar[r]\ar@{=}[d]& 0\\
0\ar[r]& Y_2\ar[r]& L'_1\ar[r]^{g'}& L_2\ar[r] & X\ar[r]& 0}
\end{equation}
where $L'_1=L_1/Y'$ is the pushout. This gives the following
morphism of varieties:
$$
EF(X,Y_1,Y_2;L_1,L_2)\rightarrow EF'(X,Y_1,Y_2;L'_1,L_2)
$$
sending $(g,Y^{\bullet})$ to $(g',L_1^{\bullet})$ where
$g':L_1/Y'\rightarrow L_2.$ Conversely, we also have the morphism
$$
EF'(X,Y_1,Y_2;L'_1,L_2)\rightarrow EF(X,Y_1,Y_2;L_1,L_2)
$$
sending $(g',L_1^{\bullet})$ to $(g,Y^{\bullet})$ where $g$ is the
composition: $L_1\rightarrow L_1/Y'\simeq L'_1\xrightarrow{g'}L_2$
(this implies $Y'\subseteq Kerg$). A simple check shows there
%
exists a homeomorphism between $EF(X,Y_1,Y_2;L_1,L_2)$ and
$EF'(X,Y_1,Y_2;L'_1,L_2)$. By Proposition \ref{Euler2}, we have
$$
\chi(EF(X,Y_1,Y_2;L_1,L_2))=\int_{\overline{Y}}g_{Y_2Y_1}^{Y}h_{XY}^{L_1L_2}
$$
and
$$
\chi(EF'(X,Y_1,Y_2;L'_1,L_2))=\int_{\overline{L'_1}}
g_{L'_1Y_1}^{L_1}h_{XY_2}^{L'_1L_2}
$$
This completes the proof.
\end{proof}
We define $$\hom(L_1,L_2)_{Y[1]\oplus X}=\{g\in\hom(L_1,L_2)|\ker
g\simeq Y, \cok g\simeq X\}$$ Then, it is easy to identify that
$$ \mv(X,Y;L_1,L_2)=\hom(L_1,L_2)_{Y[1]\oplus X}$$ We can consider a $\bbc^{*}$-action on
 $\mathrm{Hom}
(L_1,L_2)_{Y[1]\oplus X}$
or $\mv(X,Y;L_1,L_2)$ simply by $t.(f,g,h)^{*}=(f,tg,h)^{*}$ for
$t\in \bbc^{*}$ and $(f,g,h)^{*}\in \mv(X,Y;L_1,L_2).$ We also have
a projective version of Theorem \ref{associativity}, where
$\mathbb{P}$ indicates the corresponding orbit space under the
$\bbc^*$-action.
\begin{Thm}\label{associativity2}
For fixed $A$-modules $X,Y_i,L_i$ for $i=1,2,$ we have
$$
\int_{\overline{Y}}g_{Y_2Y_1}^{Y}\chi(\mathbb{P}\mathrm{Hom}(L_1,L_2)_{Y[1]\oplus
X})=\int_{\overline{L'}_1}g_{L'_1Y_1}^{L_1}\chi(\mathbb{P}\mathrm{Hom}(L'_1,L_2)_{Y_2[1]\oplus
X}).
$$ Dually, for fixed $A$-modules $X_i,Y,L_i$ for $i=1,2,$ we have
$$
\int_{\overline{X}}g_{X_2X_1}^{X}\chi(\mathbb{P}\mathrm{Hom}(L_1,L_2)_{Y[1]\oplus
X})=\int_{\overline{L'}_2}g_{X_2L'_2}^{L_2}\chi(\mathbb{P}\mathrm{Hom}(L_1,L'_2)_{Y[1]\oplus
X_1}).
$$
\end{Thm}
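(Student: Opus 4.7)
The plan is to mimic the proof of Theorem \ref{associativity} after introducing a compatible $\bbc^*$-action on the auxiliary varieties $EF(X,Y_1,Y_2;L_1,L_2)$ and $EF'(X,Y_1,Y_2;L'_1,L_2)$ defined there. Let $\bbc^*$ act by $t.(g,Y^{\bullet})=(tg,Y^{\bullet})$ on $EF$ and by $t.(g',L_1^{\bullet})=(tg',L_1^{\bullet})$ on $EF'$. These actions are well-defined, since $\ker(tg)=\ker g$ and $\cok(tg)=\cok g$ for $t\in \bbc^*$, so the chain $Y^{\bullet}\subseteq \ker g$, the submodule $Y'\subseteq L_1$ and all the isomorphism-class data are preserved. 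Write $\mathbb{P}EF$ and $\mathbb{P}EF'$ for the resulting orbit spaces (restricted to $g\neq 0$, which excludes at most the discrete case $Y\simeq L_1,\ X\simeq L_2$).

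First I would verify that the homeomorphism $EF\cong EF'$ constructed in the proof of Theorem \ref{associativity} is $\bbc^*$-equivariant. In that construction, the pushout $L'_1=L_1/Y'$ depends only on $Y^{\bullet}$, and the induced map $g':L'_1\to L_2$ is the unique factorization of $g$, which scales linearly under $g\mapsto tg$. Conversely, $g$ is recovered from $g'$ by precomposing with the projection $L_1\to L'_1$, again linear in $g'$. Hence the homeomorphism descends to $\mathbb{P}EF\cong \mathbb{P}EF'$, and in particular $\chi(\mathbb{P}EF)=\chi(\mathbb{P}EF')$.

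The main computation is to show, via Proposition \ref{Euler2}, that
\[
\chi(\mathbb{P}EF)=\int_{\overline{Y}}g_{Y_2Y_1}^{Y}\,\chi(\mathbb{P}\mathrm{Hom}(L_1,L_2)_{Y[1]\oplus X}),
\]
with the analogous formula computing $\chi(\mathbb{P}EF')$ as the right-hand side of the claim. The $\bbc^*$-equivariant projection $(g,Y^{\bullet})\mapsto g$ descends to a morphism from $\mathbb{P}EF$ onto the constructible locus $\bigsqcup_{Y}\mathbb{P}\mathrm{Hom}(L_1,L_2)_{Y[1]\oplus X}$, stratified by the isomorphism class $Y$ of $\ker g$. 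The fibre over a class $[g]$ with $\ker g\simeq Y$ is canonically identified with $\mv(Y_2,Y_1;\ker g)$, whose Euler characteristic equals $g_{Y_2Y_1}^{Y}$ and depends only on $Y$. Applying Proposition \ref{Euler2}(1)--(2) stratum by stratum gives the displayed integral, and the analogous projection $(g',L_1^{\bullet})\mapsto g'$ on $EF'$ gives the right-hand integral, proving the first identity. The dual identity follows by the same argument applied to the cokernel direction (or, equivalently, by passing to the opposite quiver).

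The main obstacle is ensuring that all projections, stratifications and fibre computations used in the proof of Theorem \ref{associativity} remain well-defined and compatible after taking $\bbc^*$-quotients; since the action rescales only the morphism $g$ and leaves every other piece of structure intact, the verification is largely a matter of bookkeeping, with the only subtlety being to exclude the exceptional zero section when it happens to lie inside $\mv(X,Y;L_1,L_2)$.
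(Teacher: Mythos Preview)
Your proposal is correct and is precisely the argument the paper intends: the paper states Theorem~\ref{associativity2} immediately after introducing the $\bbc^*$-action on $\hom(L_1,L_2)_{Y[1]\oplus X}$ and leaves the proof implicit as the evident projectivization of the proof of Theorem~\ref{associativity}. Your verification that the homeomorphism $EF\cong EF'$ is $\bbc^*$-equivariant (since only $g$ is rescaled and the pushout/pullback data are unaffected) and that the fibre identifications survive the quotient is exactly what is needed; the remark about the zero locus is also in order, and you may add that the corresponding terms ($Y\simeq L_1$ on the left, $L'_1\simeq Y_2$ on the right) contribute zero to both integrals since $\mathbb{P}\{0\}=\emptyset$.
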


\subsection{}
For fixed $\xi,\eta$ and $\xi',\eta'$ with
$\underline{\xi}+\underline{\eta}=\underline{\xi'}+\underline{\eta'}=\underline{\lz}$,
let $V_{\lz}\in\bbe_{\underline{\lz}}$ and $Q(V_{\lz})$ be the set
of $(a,b,a',b')$ such that the row and the column of the following
diagram are exact:
\begin{equation}\label{E:crosses}
\xymatrix{
& & 0 \ar[d] & &\\
& & V_{\eta} \ar[d]^-{a'} & & \\
0 \ar[r] & V_{\eta'} \ar[r]^-{a} & V_{\lambda} \ar[r]^{b} \ar[d]^-{b'} & V_{\xi'} \ar[r] & 0\\
& & V_{\xi}\ar[d] & &\\
& & 0 & &}
\end{equation}
We let
$$
Q(\xi, \eta, \xi', \eta')=\bigcup_{V_{\lambda}\in
\bbe_{\underline{\lambda}}}Q(V_{\lambda})
$$
We remark that $Q(\xi, \eta, \xi', \eta')$ can be viewed as a
constructible subset of the module variety
$\bbe_{(\underline{\xi},\underline{\eta},\underline{\xi'},\underline{\eta'},\underline{\lambda})}$
with
$\underline{\xi}+\underline{\eta}=\underline{\xi'}+\underline{\eta'}=\underline{\lambda}$
of the following quiver
\begin{equation}
\xymatrix{
 & 2 \ar[d] & \\
4 \ar[r]& 5 \ar[r] \ar[d] & 3  \\
 & 1 & }
\end{equation}

We have the following action of $G_{\underline{\lambda}}$ on
$Q(\xi, \eta,
\xi', \eta')$:
$$g.(a,b,a',b')=(ga,bg^{-1},ga',b'g^{-1}).$$
The orbit space of $Q(\xi, \eta, \xi', \eta')$ is denoted by
$Q(\xi, \eta, \xi', \eta')^{*}$ and the orbit of $(a,b,a',b')$ in
$Q(\xi, \eta, \xi',
\eta')^{*}$ is denoted by $(a,b,a',b')^{*}.$ We also have the
following action
of $G_{\underline{\lambda}}$ on
$W(V_{\xi'},V_{\eta'};\bbe_{\underline{\lambda}})$:
$g.(a,b)=(ga,bg^{-1}).$ In the induced orbit space
$\ext^1(V_{\xi'},V_{\eta'})$, the orbit of $(a,b)$ is denoted by
$(a,b)^{*}$.
Hence, we have
\begin{equation}\label{2}
\xymatrix{ W(V_{\xi'},V_{\eta'};\bbe_{\underline{\lambda}})\times
W(\xi,\eta;\bbe_{\underline{\lambda}})=Q(\xi, \eta, \xi',
\eta')\ar[d]_-{\phi_1}\ar[r]^-{\phi_2}& \mathrm{Ext}
^1(V_{\xi'},V_{\eta'})
\\ Q(\xi, \eta,
\xi', \eta')^{*}\ar[ur]_-{\phi}&}
\end{equation}
where $\phi((a,b,a',b')^{*})=(a,b)^{*}$ is well defined.

Let $(a,b,a',b')\in Q(V_{\lambda})$. We claim that the stabilizer
of $(a,b,a',b')$ in $\phi_1$ is $$a'e_1\hom(\cok b'a, \ker
ba')e_4b',$$ which is isomorphic to $\hom(\cok b'a, \ker ba'),$
where the injection $e_1: \ker ba'\rightarrow V_{\lambda}$ is
induced naturally by $a'$ and the surjection $e_4:
V_{\lambda}\rightarrow \cok b'a $ is induced naturally by $b'$. In
fact, consider the action of $G_{\underline{\lambda}}$ on
$W(V_{\xi},V_{\eta};\bbe_{\underline{\lambda}})$ given by
$g.(a',b')=(ga,bg^{-1}),$ the stabilizer of $(a',b')$ is
$1+a'\hom(V_{\xi},V_{\eta})b'$ (\cite{RingelGreen}). It is clear
that the stabilizer of  $(a,b,a',b')$ under the action given by
$\phi_1$ is the following subgroup:
$$
\{1+a'fb'\mid f\in \hom(V_{\xi},V_{\eta}), ba'fb'=0, a'fb'a=0\}.
$$
Since $b'$ is surjective and $a'$ is injective, $ba'fb'=0,
a'fb'a=0$ imply $ba'f=0, fb'a=0.$ This means $\Im f\in \ker ba'$
and $f(\ker b'a)=0.$ We easily deduce the above claim by this
conclusion. In the same way, the stabilizer of $(a,b)$ under the
action given by $\phi_2$ is $1+a\hom(V_{\xi'},V_{\eta'})b$, which
is isomorphic to
$\hom(V_{\xi'},V_{\eta'})$ just for $a$ is injective and $b$ is
surjective. We now compute the fibre
$\phi_1(\phi_2^{-1}((a,b)^{*}))$ of $\phi$ over $(a,b)^{*}.$
$$
\phi_2^{-1}((a,b)^{*})=(ga,bg^{-1},a',b')
$$
where $(a',b')\in W(V_{\xi},V_{\eta};V_{\lambda}).$ Fixed $a,b,$
Fix $a,b,$ and let $U=\{(a,b,a',b')\}$. Then
$$U\subset \phi_2^{-1}((a,b)^{*}), \quad
\phi_1(U)=\phi_1(\phi_2^{-1}((a,b)^{*}))$$
$\phi_1\mid_U:U\rightarrow \phi_1(U)$ can be viewed as the action
of the group $a\hom(V_{\xi'},V_{\eta'})b$ with the stable subgroup
%
$a'e_1\hom(\cok b'a, \ker ba')e_4b',$ i.e., the fibre of
$\phi_1\mid_U$ is isomorphic to
$a\hom(V_{\xi'},V_{\eta'})b/a'e_1\hom(\cok b'a, \ker ba')e_4b'.$
Hence, by Corollary \ref{euler3},
$$\chi(W(V_{\xi},V_{\eta};V_{\lambda}))=\chi(U)=\chi(\phi_1(U))=\chi(\phi_1(\phi_2^{-1}((a,b)^{*}))).$$
Moreover, consider the action of $G_{\underline{\xi}}\times
G_{\underline{\eta}}$ on $Q(\xi,\eta,\xi',\eta')^{*}$ and the
induced orbit space, denoted by $Q(\xi, \eta, \xi',
\eta')^{\wedge}.$  The stabilizer
$\operatorname{Stab}_G((a,b,a',b')^{*})$ of
$(a,b,a',b')^{*}$ is
$$
\{(g_1,g_2)\in G_{\underline{\xi}}\times G_{\underline{\eta}}\mid
ga'=a'g_2,b'g=g_1b'\quad \mbox{for some }g\in
1+a\hom(V_{\xi'},V_{\eta'})b\},
$$
also denoted by $G((a,b,a',b')^{*})$. This determines the group
embedding
$$\operatorname{Stab}_G((a,b,a',b')^{*})\ra
(1+a\hom(V_{\xi'},V_{\eta'})b)/(1+ae_1\hom(\cok b'a, \ker
ba')e_4b).$$  The group $G((a,b,a',b')^{*})$ is isomorphic to a
vector space since $ba=0.$ We know that
$1+a\hom(V_{\xi'},V_{\eta'})b$ is the subgroup of $\aut V_{\lz},$ it
acts on $W(V_{\xi},V_{\eta};\bbe_{\underline{\lambda}})$ naturally.
 The orbit space
of $W(V_{\xi},V_{\eta};\bbe_{\underline{\lambda}})$ under the
action of $1+a\mathrm{Hom}(V_{\xi'},V_{\eta'})b$ is denoted by
$\widetilde{W}(V_{\xi},V_{\eta};\bbe_{\underline{\lambda}})$ and
similar considerations hold for
$\mv(V_{\xi},V_{\eta};\bbe_{\underline{\lambda}}).$
 Combined with the
discussion above, we have the following commutative diagram of
actions of groups:

\begin{equation}\label{stablegroup}
\xymatrix{W(V_{\xi},V_{\eta};\bbe_{\underline{\lambda}})\ar[rrr]^{1+a\hom(V_{\xi'},V_{\eta'})b}
\ar[d]^{G_{\underline{\xi}}\times G_{\underline{\eta}}}&&&
\widetilde{W}(V_{\xi},V_{\eta};\bbe_{\underline{\lambda}})\ar[d]^{G_{\underline{\xi}}\times
G_{\underline{\eta}}}\\
\mv(V_{\xi},V_{\eta};\bbe_{\underline{\lambda}})\ar[rrr]^{1+a\hom(V_{\xi'},V_{\eta'})b}&&&\widetilde{\mv}(V_{\xi},V_{\eta};\bbe_{\underline{\lambda}})}
\end{equation}
The stabilizer of $(a', b')^{\wedge}$ in the bottom map is
%
$$\{g\in 1+a\hom(V_{\xi'},V_{\eta'})b\mid ga'=a'g_2, b'g=g_1b' \mbox{ for some }(g_1,g_2)\in G_{\underline{\xi}}\times G_{\underline{\eta}}\}$$
which is isomorphic to a vector space too, it is denoted by
$V(a,b,a',b').$ We can construct the map from $V(a,b,a',b')$ to
$\operatorname{Stab}_G((a,b,a',b')^{*})$
sending $g$ to $(g_1,g_2).$ It is well-defined
since $a'$ is injective and $b'$ is surjective. We have
$$
V(a,b,a',b')/\hom(\cok b'a, \ker ba')\cong
\operatorname{Stab}_G((a,b,a',b')^{*}).
$$

We have the following proposition.

\begin{Prop}\label{fibre1} The fiber over $(a,b)^{*}\in\ext^1(V_{\xi'},V_{\eta'})_{\lambda}$ of the surjective map
$$\phi^{\wedge}: Q(\xi,
\eta, \xi', \eta')^{\wedge}\rightarrow \mathrm{Ext}
^1(V_{\xi'},V_{\eta'})$$ is isomorphic to
$\widetilde{\mv}(V_{\xi},V_{\eta};\bbe_{\lambda})$, where
$\widetilde{\mv}(V_{\xi},V_{\eta};\bbe_{\lambda})$ is such that
there exists a surjective morphism from
$\mv(V_{\xi},V_{\eta};V_{\lambda})$ to
$\widetilde{\mv}(V_{\xi},V_{\eta};\bbe_{\lambda})$ such that any
fibre is isomorphic to an affine space of dimension
$$\mathrm{dim}_{\bbc}\mathrm{Hom}(V_{\xi'},V_{\eta'})-\mathrm{dim}_{\bbc}\mathrm{Hom}(\mathrm{Coker} b'a,\mathrm{Ker}
ba')- \mathrm{dim}_{\bbc}\mathrm{Stab}_{G}((a,b,a',b')^{*})).$$
\end{Prop}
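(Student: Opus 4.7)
The plan is to describe $(\phi^{\wedge})^{-1}((a,b)^{*})$ as a double quotient of $W(V_{\xi},V_{\eta};V_{\lambda})$ by two commuting group actions, compute it in two orders, and thereby extract both the isomorphism with $\widetilde{\mv}(V_{\xi},V_{\eta};\bbe_{\lambda})$ and the affine-fibre structure of the surjection $\mv(V_{\xi},V_{\eta};V_{\lambda})\to\widetilde{\mv}(V_{\xi},V_{\eta};\bbe_{\lambda})$.

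For the first assertion, I would start with a class $(a,b,a',b')^{\wedge}$ mapping to $(a,b)^{*}$ and use the $G_{\underline{\lambda}}$-action (coming from $\phi_{1}$) to fix the first two coordinates as $(a,b)$; the stabilizer of $(a,b)$ in $G_{\underline{\lambda}}$ is exactly $1+a\hom(V_{\xi'},V_{\eta'})b$ (computed in the excerpt), while passing from $Q^{*}$ to $Q^{\wedge}$ introduces the $G_{\underline{\xi}}\times G_{\underline{\eta}}$-action. These two actions on $W(V_{\xi},V_{\eta};V_{\lambda})$---namely $g.(a',b')=(ga',b'g^{-1})$ and $(g_{1},g_{2}).(a',b')=(a'g_{2}^{-1},g_{1}b')$---commute by direct inspection, so the fibre equals the double quotient; quotienting by $G_{\underline{\xi}}\times G_{\underline{\eta}}$ first yields $\mv(V_{\xi},V_{\eta};V_{\lambda})$ with a residual $(1+a\hom b)$-action, whose quotient is by definition $\widetilde{\mv}(V_{\xi},V_{\eta};\bbe_{\lambda})$.

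For the second assertion, I would inspect the surjection $\mv(V_{\xi},V_{\eta};V_{\lambda})\to\widetilde{\mv}(V_{\xi},V_{\eta};\bbe_{\lambda})$ fibrewise: the fibre over the class of $(a',b')$ is the $(1+a\hom b)$-orbit of this class in $\mv$, hence isomorphic to $(1+a\hom(V_{\xi'},V_{\eta'})b)/V(a,b,a',b')$, since the stabilizer in $\mv$ is precisely $V(a,b,a',b')$---those $g$ whose action on $(a',b')$ can be absorbed by some $(g_{1},g_{2})\in G_{\underline{\xi}}\times G_{\underline{\eta}}$. Using the isomorphism $V(a,b,a',b')/\hom(\cok b'a,\ker ba')\cong\operatorname{Stab}_{G}((a,b,a',b')^{*})$ recorded just before the proposition, the dimension of the fibre becomes
$$\dim\hom(V_{\xi'},V_{\eta'})-\dim\hom(\cok b'a,\ker ba')-\dim\operatorname{Stab}_{G}((a,b,a',b')^{*}),$$
and since both $1+a\hom b$ and $V(a,b,a',b')$ are abelian unipotent (using $ba=0$), the quotient is a genuine affine space. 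The main subtlety I expect is verifying that the stabilizer of the $\mv$-class of $(a',b')$ under $1+a\hom b$ really equals $V(a,b,a',b')$, which in turn depends on the uniqueness of the absorbing pair $(g_{1},g_{2})$ given $g$---a consequence of the injectivity of $a'$ and the surjectivity of $b'$; once this is in place the entire computation is forced by the chain of isomorphisms already established immediately before the proposition.
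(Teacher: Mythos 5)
Your proof is correct and takes essentially the same approach as the paper: you identify the fibre of $\phi^{\wedge}$ as the double quotient of $W(V_{\xi},V_{\eta};V_{\lambda})$ by the commuting actions of $1+a\hom(V_{\xi'},V_{\eta'})b$ and $G_{\underline{\xi}}\times G_{\underline{\eta}}$, perform the quotients in the order that isolates $\widetilde{\mv}$, and use the stabilizer $V(a,b,a',b')$ together with the recorded isomorphism $V(a,b,a',b')/\hom(\cok b'a,\ker ba')\cong\operatorname{Stab}_G((a,b,a',b')^{*})$ exactly as the text does in the discussion preceding the proposition. The subtlety you flag --- uniqueness of the absorbing pair $(g_1,g_2)$ via injectivity of $a'$ and surjectivity of $b'$ --- is indeed the point the paper relies on implicitly, and your observation that $ba=0$ makes the relevant groups vector spaces matches the paper's remark to the same effect.
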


Also, we also have a commutative diagram induced by \eqref{2}. By
Proposition \ref{Euler2}, we have
\begin{Cor} The following equality holds.
\begin{equation}
\sum_{\lambda}\chi(Q(\lambda)^{\wedge})=\sum_{\lambda}g^{\lambda}_{\beta\alpha}h_{\lambda}^{\xi'\eta'}
\end{equation}
\end{Cor}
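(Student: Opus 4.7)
The plan is to break both sides up according to the isomorphism class $\lambda$ of the middle term and then apply the surjection $\phi^{\wedge}\colon Q(\xi,\eta,\xi',\eta')^{\wedge}\to \mathrm{Ext}^{1}(V_{\xi'},V_{\eta'})$ constructed just before Proposition~\ref{fibre1}. Stratifying the target by the middle-term type gives
$$\mathrm{Ext}^{1}(V_{\xi'},V_{\eta'})=\bigsqcup_{\lambda}\mathrm{Ext}^{1}(V_{\xi'},V_{\eta'})_{\lambda},$$
with $\chi(\mathrm{Ext}^{1}(V_{\xi'},V_{\eta'})_{\lambda})=h^{\xi'\eta'}_{\lambda}$ by definition. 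Pulling this decomposition back along $\phi^{\wedge}$ yields a corresponding disjoint decomposition $Q(\xi,\eta,\xi',\eta')^{\wedge}=\bigsqcup_{\lambda}Q(\lambda)^{\wedge}$, so by Proposition~\ref{Euler}(1) the problem reduces to computing $\chi(Q(\lambda)^{\wedge})$ for each fixed $\lambda$.

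For such a $\lambda$, Proposition~\ref{fibre1} identifies every fibre of $\phi^{\wedge}$ over the stratum with $\widetilde{\mv}(V_{\xi},V_{\eta};\bbe_{\lambda})$, and moreover exhibits this space as the image of a surjection from $\mv(V_{\xi},V_{\eta};V_{\lambda})$ whose fibres are affine spaces of a dimension that depends only on the data $\xi,\eta,\xi',\eta',\lambda$. Since $\chi(\bbc^{n})=1$ (Proposition~\ref{Euler}(3)) and Euler characteristic is multiplicative over morphisms with a common fibre-characteristic (Proposition~\ref{Euler}(2)), this forces
$$\chi(\widetilde{\mv}(V_{\xi},V_{\eta};\bbe_{\lambda}))=\chi(\mv(V_{\xi},V_{\eta};V_{\lambda}))=g^{\lambda}_{\beta\alpha},$$
where the last equality is the definition of the Hall number written in the stated identity. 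Applying Proposition~\ref{Euler2}(2) to the surjection $Q(\lambda)^{\wedge}\to \mathrm{Ext}^{1}(V_{\xi'},V_{\eta'})_{\lambda}$, all of whose fibres now carry the same Euler characteristic $g^{\lambda}_{\beta\alpha}$, gives $\chi(Q(\lambda)^{\wedge})=g^{\lambda}_{\beta\alpha}\cdot h^{\xi'\eta'}_{\lambda}$, and summing over $\lambda$ produces the asserted formula.

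The main technical delicacy is that $\phi^{\wedge}$ is a map between orbit spaces under the $G_{\underline{\xi}}\times G_{\underline{\eta}}$-action, so multiplicativity of Euler characteristic must be invoked in the quasi Euler-Poincar\'e form of Proposition~\ref{Euler2}(2) rather than in the classical Proposition~\ref{Euler}(2). To justify this I would use Rosenlicht's Lemma~\ref{Rosenlicht} to stratify $Q(\lambda)^{\wedge}$ and its target into finitely many locally closed $G$-invariant pieces admitting geometric quotients, verify stratum by stratum that the fibre identification of Proposition~\ref{fibre1} is $G$-equivariant (so descends to a map of quotients with the advertised fibres), and assemble the contributions. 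The only remaining point is that the affine-fibre dimension appearing in Proposition~\ref{fibre1}, although not literally constant on all of $Q(\xi,\eta,\xi',\eta')^{\wedge}$, is constant on each stratum, which is automatic since it depends only on $(V_{\xi'},V_{\eta'},\lambda)$ and not on the specific representative $(a,b,a',b')^{\wedge}$ in the stratum.
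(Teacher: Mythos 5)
Your argument is correct and follows essentially the same route as the paper: the paper's (very terse) proof likewise combines the diagram defining $\phi$, Proposition \ref{fibre1} and Proposition \ref{Euler2}, stratifying $\mathrm{Ext}^1(V_{\xi'},V_{\eta'})$ by the middle term and using that the affine fibres all have Euler characteristic $1$. The only small slip is your closing claim that the affine-fibre dimension in Proposition \ref{fibre1} depends only on $(\xi',\eta',\lambda)$ --- it also involves $\mathrm{Hom}(\mathrm{Coker}\,b'a,\mathrm{Ker}\,ba')$ and the stabilizer, hence varies with the point --- but this is harmless, since $\chi(\bbc^n)=1$ for every $n$, so constancy of the fibres' Euler characteristic (not of their dimension) is all that the multiplicativity argument requires.
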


\subsection{}
Let $\mo(\xi, \eta, \xi', \eta')$ be the set of
$(V_{\delta},V_{\beta},e_1,e_2,e_3,e_4,c,d)$ such that the
following commutative diagram has exact rows and columns:
\begin{equation}\label{guru}
\xymatrix{ & 0 \ar[d] &&&&  0\ar[d] && \\
0 \ar[r] & V_{\beta} \ar[rr]^-{e_1} \ar[dd]^-{u'} && V_{\eta}\ar[rr]^-{e_2} \ar@{.>}[dl]^-{u_{V_{\eta}}}\ar@{.>}[dd]&& V_{\delta} \ar[r] \ar[dd]^-{x}& 0\\
&&S\ar@{.>}[dr]^-{d}&&&&\\
& V_{\eta'} \ar@{.>}[ur]_-{u_{V_{\eta'}}}\ar[dd]^-{v'} \ar@{.>}[rr]&& V_{\lambda}\ar@{.>}[rr]\ar@{.>}[dd]\ar@{.>}[dr]^-{c}&& V_{\xi'} \ar[dd]^-{y} &\\
&&&&T\ar@{.>}[ur]_-{q_{V_{\xi'}}}\ar@{.>}[dl]^-{q_{V_{\xi}}}&&\\
0\ar[r] &  V_{\alpha} \ar[d] \ar[rr]^-{e_3} && V_{\xi} \ar[rr]^-{e_4} && V_{\gamma} \ar[r] \ar[d] & 0\\
& 0 && && 0 &}
\end{equation}
where $V_{\delta},V_{\beta}$ are submodules of
$V_{\xi'},V_{\eta'},$ respectively;
$V_{\gamma}=V_{\xi'}/V_{\delta},$  $
V_{\alpha}=V_{\eta'}/V_{\beta},$ $u', x, v', y$ are the canonical
morphisms, and $V_{\lambda}$ is the center induced by the above
square, $T=V_{\xi} \times_{V_{\gamma}} V_{\xi'} =\{(x \oplus m)
\in V_{\xi} \oplus V_{\xi'}\;|\; e_4(x)=y(m)\}$ and $S=V_{\eta}
\sqcup_{\small{V_{\beta}}} \hspace{-.05in}V_{\eta'} =V_{\eta}
\oplus V_{\eta'} / \{e_1(v_{\beta})\oplus
u'(v_{\beta})\;|\;v_{\beta} \in V_{\beta}\}.$ Then there is unique
map $f:S\rightarrow T$ for the fixed square. Let $(c,d)$ be a pair
of maps such that $c$ is surjective, $d$ is injective and
$cd=f.$ In particular, for fixed submodules $V_{\delta}$ and
$V_{\beta}$ of $V_{\xi'}$ and $V_{\eta'}$ respectively, the subset
of $\mo(\xi,\eta,\xi',\eta')$
$$
\{(V_{1},V_{2},e_1,e_2,e_3,e_4,c,d)\in
\mo(\xi,\eta,\xi',\eta')\mid V_1=V_{\delta}, V_2=V_{\beta}\}
$$
is denoted by $\mo_{(V_{\gamma},V_{\delta},V_{\alpha},V_{\beta})}$
where $V_{\gamma}=V_{\xi'}/V_{\delta}$ and $
V_{\alpha}=V_{\eta'}/V_{\beta}$.
There is a natural action of the group $G_{\underline\lz}$ on
$\mo_{(V_{\gamma},V_{\delta},V_{\alpha},V_{\beta})}$ as follows:
$$
g.(V_{\delta},V_{\beta},e_1,e_2,e_3,e_4,c,d)=(V_{\delta},V_{\beta},e_1,e_2,e_3,e_4,cg^{-1},gd)
$$

We denote by
$\mo^{*}_{(V_{\gamma},V_{\delta},V_{\alpha},V_{\beta})}$ and
$\mo(\xi, \eta, \xi', \eta')^{*}$ the orbit spaces under the
actions of $G_{\underline\lz}.$

\subsection{}
There is a homeomorphism between $Q(\xi, \eta, \xi', \eta')^{*}$
and $\mo(\xi, \eta, \xi', \eta')^{*}$ ( see \cite{DXX}):
$$
\theta^{*}:Q(\xi, \eta, \xi', \eta')^{*}\rightarrow \mo(\xi, \eta,
\xi', \eta')^{*}
$$
induced by the map between $Q(\xi, \eta, \xi', \eta')$ and
$\mo(\xi, \eta, \xi', \eta')$ defined as follows:
$$V_{\beta}=\text{Ker}\; b'a \simeq \text{Ker}\; ba', \qquad V_{\delta}=\text{Im}\;ba', $$
$$e_1=(a')^{-1}a, \quad e_2=ba', \quad e_3=b'a, \quad e_4=b(b')^{-1}$$
and $c,d$ are induced by the maps:
$$
V_{\eta}\oplus V_{\eta'} \rightarrow V_{\lambda} \quad\mbox{ and }
\quad V_{\lambda}\rightarrow V_{\xi}\oplus V_{\xi'}
$$
There is an action of $G_{\underline{\xi}}\times
G_{\underline{\eta}}$ on $\mo(\xi,\eta,\xi',\eta')^{*},$ defined
as follows: for $(g_1,g_2)\in G_{\xi}\times G_{\eta},$
$$
(g_1,g_2).(V_{\delta},V_{\beta},e_1,e_2,e_3,e_4,c,d)^{*}=(V_{\delta},V_{\beta},g_2e_1,e_2g_2^{-1},g_1e_3,e_4g_1^{-1},c',d')^{*}
$$
Let us determine the relation between $(c',d')$ and $(c,d).$

Suppose that
$(V_{\delta},V_{\beta},g_2e_1,e_2g_2^{-1},g_1e_3,e_4g_1^{-1})$
induces $S',T'$ and the unique map $f': S'\rightarrow T',$ then it
is clear that there are isomorphisms:
$$a_1: S\rightarrow S' \quad \mbox{and} \quad a_2: T\rightarrow T'$$
induced by isomorphisms:
$$
\left(%
\begin{array}{cc}
  g_2 & 0\\
  0& id \\
\end{array}%
\right): V_{\eta}\oplus V_{\eta'}\rightarrow V_{\eta}\oplus V_{\eta'} \quad \mbox{and} \quad \left(%
\begin{array}{cc}
  g_1 & 0\\
  0& id \\
\end{array}%
\right): V_{\xi}\oplus V_{\xi'}\rightarrow V_{\xi}\oplus V_{\xi'}
$$

 So $f'=a_2fa_{1}^{-1},$ we have the
following commutative diagram:
\begin{equation}\label{cd}
\xymatrix{S'\ar[r]^{d'}& V_{\lambda}\ar[r]^{d'_1}& V_{\gamma}\ar@{=}[d]\\
S\ar[r]^{d}\ar[u]^{a_1}\ar[d]& V_{\lambda}\ar[r]^{d_1}\ar[d]^{c}\ar[u]^{g}& V_{\gamma}\ar@{=}[d]\\
\Im f\ar[r]\ar[d]^{a_2}& T\ar[r]\ar[d]^{a_2}& V_{\gamma}\ar@{=}[d]\\
\Im f'\ar[r]& T'\ar[r]& V_{\gamma}}
\end{equation}
Hence, $c'=a_2cg^{-1}$ and $d'=gda_{1}^{-1}.$ In particular,
$c=c'$ and $d=d'$ if and only if $g_1=id_{V_{\xi}}$ and
$g_2=id_{V_{\eta}}.$ This shows that the action of
$G_{\underline{\xi}}\times G_{\underline{\eta}}$ is free.

Its orbit space is denoted by $\mo(\xi,\eta,\xi',\eta')^{\wedge}.$
The homeomorphism $\theta^{*}$ above induces the homeomorphism in
the following the Proposition:
\begin{Prop}\label{fibre2}
There exists a homeomorphism under quotient topology
$$
\theta^{\wedge}:Q(\xi,\eta,\xi',\eta')^{\wedge}\rightarrow
\mo(\xi,\eta,\xi',\eta')^{\wedge}.
$$
\end{Prop}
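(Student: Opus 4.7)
The plan is to deduce Proposition~\ref{fibre2} directly from the homeomorphism $\theta^{*}:Q(\xi,\eta,\xi',\eta')^{*}\to\mo(\xi,\eta,\xi',\eta')^{*}$ by showing that $\theta^{*}$ is equivariant with respect to the two $G_{\underline{\xi}}\times G_{\underline{\eta}}$-actions described just above the proposition. Once equivariance is established, $\theta^{*}$ descends to a continuous bijection between the quotient spaces, whose inverse is likewise induced by the equivariant homeomorphism $(\theta^{*})^{-1}$; so the induced map is automatically a homeomorphism in the quotient topology.

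The core step is therefore to check that for every $(g_{1},g_{2})\in G_{\underline{\xi}}\times G_{\underline{\eta}}$ and every $(a,b,a',b')^{*}\in Q(\xi,\eta,\xi',\eta')^{*}$,
\[
\theta^{*}\bigl((g_{1},g_{2}).(a,b,a',b')^{*}\bigr)\;=\;(g_{1},g_{2}).\theta^{*}\bigl((a,b,a',b')^{*}\bigr).
\]
I would compute the left-hand side using the action $(g_{1},g_{2}).(a,b,a',b')^{*}=(a,b,a'g_{2}^{-1},g_{1}b')^{*}$ and the explicit formula for $\theta^{*}$: the submodules transform as $\ker g_{1}b'a=\ker b'a=V_{\beta}$ and $\operatorname{Im}ba'g_{2}^{-1}=\operatorname{Im}ba'=V_{\delta}$ since $g_{1},g_{2}$ are automorphisms; and the four maps satisfy $e_{1}\mapsto g_{2}e_{1}$, $e_{2}\mapsto e_{2}g_{2}^{-1}$, $e_{3}\mapsto g_{1}e_{3}$, $e_{4}\mapsto e_{4}g_{1}^{-1}$. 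These are exactly the formulas for the $G_{\underline{\xi}}\times G_{\underline{\eta}}$-action on $\mo(\xi,\eta,\xi',\eta')^{*}$. For the pair $(c,d)$ one uses the diagram~(\ref{cd}) preceding the proposition, which computes $c'=a_{2}cg^{-1}$ and $d'=gda_{1}^{-1}$; these are precisely the transformations produced by the action on the $\mo^{*}$ side, so the $(c,d)$-components of both sides agree on orbits. This verifies equivariance.

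With equivariance in hand, I would finish as follows. The equivariant continuous map $\theta^{*}$ followed by the quotient map $\mo(\xi,\eta,\xi',\eta')^{*}\to\mo(\xi,\eta,\xi',\eta')^{\wedge}$ is $G_{\underline{\xi}}\times G_{\underline{\eta}}$-invariant, so by the universal property of the quotient topology it factors through a continuous map $\theta^{\wedge}:Q(\xi,\eta,\xi',\eta')^{\wedge}\to\mo(\xi,\eta,\xi',\eta')^{\wedge}$. The same argument applied to $(\theta^{*})^{-1}$ produces a continuous two-sided inverse, so $\theta^{\wedge}$ is a homeomorphism.

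I expect the only subtle point to be the verification of the $(c,d)$-part of the equivariance, since $c$ and $d$ are not intrinsic to $(a,b,a',b')$ but are determined by auxiliary choices of $S$ and $T$ and a factorization of $f$. The trick is that changing $(a,b,a',b')$ by $(g_{1},g_{2})$ changes $S$ and $T$ by the canonical isomorphisms $a_{1},a_{2}$ of diagram~(\ref{cd}), and the induced change of factorization matches the $\mo^{*}$-action formula $(c,d)\mapsto(a_{2}cg^{-1},gda_{1}^{-1})$ exactly. Once this matching is written out, all other parts are routine.
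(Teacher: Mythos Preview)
Your proposal is correct and follows essentially the same approach as the paper. The paper does not give a separate proof of this proposition at all; it simply writes ``The homeomorphism $\theta^{*}$ above induces the homeomorphism in the following Proposition,'' relying on the preceding computation of $(c',d')$ via diagram~(\ref{cd}) as the implicit equivariance check (and mirroring the explicit finite-field computation in Section~\ref{greenfinite}). Your write-up spells out that equivariance argument and the descent to quotients in more detail than the paper itself provides.
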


Let $\md(\xi,\eta,\xi',\eta')^{*}$ be the set of
$(V_{\delta},V_{\beta},e_1,e_2,e_3,e_4)$ such that the diagram
\eqref{guru} is commutative and has exact rows and columns.
In particular, for fixed $V_{\delta}$ and $V_{\beta}$, its subset
$$
\{(V_{1},V_{2},e_1,e_2,e_3,e_4)\mid V_1=V_{\delta},
V_2=V_{\beta}\}
$$
is denoted by
$D^{*}_{(V_{\gamma},V_{\delta},V_{\alpha},V_{\beta})}$ where
$V_{\gamma}=V_{\xi'}/V_{\delta}$ and $
V_{\alpha}=V_{\eta'}/V_{\beta}$.
Then we
have a projection:
$$
\varphi^{*}:\mo(\xi,\eta,\xi',\eta')^{*}\rightarrow
\md(\xi,\eta,\xi',\eta')^{*}
$$

We claim that the fibre of this morphism is isomorphic to a vector
space which has the same dimension as
$\mathrm{Ext}^1(V_{\gamma},V_{\beta})$ for any
element in $\md_{(V_{\gamma},V_{\delta},V_{\alpha},V_{\beta})}^{*}.$

Fix an element $(V_{\delta},V_{\beta},e_1,e_2,e_3,e_4)\in
\md^{*}_{(V_{\gamma},V_{\delta},V_{\alpha},V_{\beta})},$ and let
$V$
be the set consisting of the equivalence classes $(c,d)^*$of
elements $(c,d)$ under the action of $G_{\underline{\lambda}}$
such that the following diagram is commutative:
\begin{equation}\label{square1}
\xymatrix{V_{\beta}\ar@{=}[r]\ar[d]^-{u_1}& V_{\beta}\ar[r]\ar[d]^-{s}& 0\ar[d]\\
S\ar[r]^-{d}\ar[d]^-{v_1}& V_{\lambda}\ar[r]^-{t}\ar[d]^-{c}& V_{\gamma}\ar@{=}[d]\\
L\ar[r]^-{u_2}& T\ar[r]^-{v_2}& V_{\gamma}}
\end{equation}
where $u_1,u_2,v_1,v_2$ are fixed and come from the long exact
sequence:
\begin{equation}
\xymatrix{&&&L=\mathrm{Im}f
\ar[dr]^-{u_2}&&&\\
0\ar[r]&V_{\beta}\ar[r]^-{u_1}&
S\ar[ur]^-{v_1}\ar[rr]^-{f}&&T\ar[r]^-{v_2}&V_{\gamma}\ar[r]&0}
\end{equation}
and where $s,t$ are naturally induced by $c,d$ respectively. Then
$V$ is the
fibre of $\varphi^*.$ We note that $c$ is in diagram
\eqref{square1} if and only if $c\in u_2\hom(V_{\gamma},L)t.$ Of
course, $u_2\hom(V_{\gamma},L)t$ is isomorphic to
$\hom(V_{\gamma},L).$

Let $\varepsilon_0\in \ext^1(V_{\gamma},L)$ be the class of the
following exact sequence:
$$
\xymatrix{0\ar[r]&L\ar[r]^-{u_2}&T\ar[r]^-{v_2}&V_{\gamma}
\ar[r]&0}
$$
The above long exact sequence induces the following long exact
sequence:
\begin{equation}\label{E:Cross4}
\xymatrix{ 0 \ar[r] & \text{Hom}(V_{\gamma},V_{\beta}) \ar[r] &
\text{Hom}(V_{\gamma},S) \ar[r]& \text{Hom}(V_{\gamma},L) \ar[r]
&}
\end{equation}
$$\qquad\xymatrix{
 \ar[r] & \text{Ext}^1(V_{\gamma},V_{\beta}) \ar[r] & \text{Ext}^1(V_{\gamma},S) \ar[r]^-{\phi} & \text{Ext}^1(V_{\gamma},L) \ar[r] &
 0}.
$$
Consider the morphism $\omega:V\rightarrow
\phi^{-1}(\varepsilon_0)$ sending $(c,d)^{*}$ to $(d,t)^{*}.$
$$
\omega^{-1}((d,t)^{*})=\{(cg^{-1},gd)^{*}\mid g\in
G_{\underline{\lambda}}\}=\{(cg^{-1},d)^{*}\mid g\in
1+d\hom(V_{\gamma},S)t \}
$$
Hence, the fibre of $\omega$ can be viewed as the orbit space of
$u_2\hom(V_{\gamma},L)t$ under the action of
$1+d\hom(V_{\gamma},S)t$ given by $g.u_2ft=u_2fg_{\gamma}^{-1}t$
where
$g\in 1+d\hom(V_{\gamma},S)t$ and $g_{\gamma}$ is the isomorphism
on $V_{\gamma}$ induced by $g.$ with the stabilizer
isomorphic to the vector space $\hom(V_{\gamma},V_{\beta}).$
Hence, up
to a translation from $\varepsilon_0$ to $0,$ $V$ is isomorphic to
the affine space
$$
\phi^{-1}(\varepsilon_0)\times \hom(V_{\gamma},L)\times
\hom(V_{\gamma},V_{\beta})/\hom(V_{\gamma},S)
$$
which is denoted by $W(V_{\delta},V_{\beta},e_1,e_2,e_3,e_4),$ and
whose dimension is $\dim_{\bbc}\ext^1(V_{\gamma},V_{\beta}).$

There is also an action of the group $G_{\underline{\xi}}\times
G_{\underline{\eta}}$ on $\md(\xi,\eta,\xi',\eta')^{*}$ with
stabilizer isomorphic to the vector space
$\hom(V_{\gamma},V_{\alpha})\times \hom(V_{\delta},V_{\beta}).$
The orbit space is denoted by $\md(\xi,\eta,\xi',\eta')^{\wedge}.$
The projection $\varphi^*$ naturally induces the projection:
$$
\varphi^{\wedge}: \mo(\xi,\eta,\xi',\eta')^{\wedge}\rightarrow
\md(\xi,\eta,\xi',\eta')^{\wedge}
$$
Its fibre over $(V_{\delta},V_{\beta},e_1,e_2,e_3,e_4)^{\wedge}$
is
isomorphic to the quotient space of
$$(\varphi^{*})^{-1}(V_{\delta},V_{\beta},e_1,e_2,e_3,e_4)$$ under the action of
$\hom(V_{\gamma},V_{\alpha})\times \hom(V_{\delta},V_{\beta}).$
The corresponding stabilizer of
$(V_{\delta},V_{\beta},e_1,e_2,e_3,e_4,c,d)^{*}\in
(\varphi^{*})^{-1}(V_{\delta},V_{\beta},e_1,e_2,e_3,e_4)$ is
$$
\{(g_1,g_2)\in 1+e_3\mathrm{Hom}(V_{\gamma},V_{\alpha})e_4\times
1+e_1\mathrm{Hom}(V_{\delta},V_{\beta})e_2\mid
$$$$ga'=a'g_2,b'g=g_1b'\quad \mbox{for some }g\in 1+a\mathrm{Hom}(V_{\xi'},V_{\eta'})b\}
$$
where $(a,b,a',b')^{*}$ is induced by $\theta^*$ as showed in
diagram \eqref{guru}. It is isomorphic to the vector space
$\operatorname{Stab}_G((a,b,a',b')^{*}).$
Therefore, we have
\begin{Prop}\label{fibre3}
There exists a projection $$ \varphi^{\wedge}:
\mo(\xi,\eta,\xi',\eta')^{\wedge}\rightarrow
\md(\xi,\eta,\xi',\eta')^{\wedge}
$$
such that any fibre for $(V_{\delta}, V_{\beta}, e_1, e_2, e_3,
e_4)^{\wedge}$ is isomorphic to an affine space of dimension
$$
\mathrm{dim}_{\bbc}\mathrm{Ext}^1(V_{\gamma},V_{\beta})+\mathrm{dim}_{\bbc}
\mathrm{Stab}_{G}((a,b,a',b')^{*})-\mathrm{dim}_{\bbc}\mathrm{Hom}(V_{\gamma},V_{\alpha})-\mathrm{dim}_{\bbc}
\mathrm{Hom}(V_{\delta},V_{\beta})
$$
where $V_{\gamma}\simeq V_{\xi'}/V_{\delta}$ and $V_{\alpha}\simeq
V_{\eta'}/V_{\beta}$.
\end{Prop}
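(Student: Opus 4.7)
The plan is to assemble $\varphi^\wedge$ and its fibre description from the two ingredients developed immediately before the statement: the identification of the fibre of $\varphi^*\colon \mo(\xi,\eta,\xi',\eta')^*\to\md(\xi,\eta,\xi',\eta')^*$ over $(V_\delta,V_\beta,e_1,e_2,e_3,e_4)$ with the affine space $W(V_\delta,V_\beta,e_1,e_2,e_3,e_4)$ of dimension $\dim_\bbc\ext^1(V_\gamma,V_\beta)$, and the identification of the $G_{\underline\xi}\times G_{\underline\eta}$-stabilizer of this point in $\md^*$ with the vector group $\hom(V_\gamma,V_\alpha)\times\hom(V_\delta,V_\beta)$. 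First I would verify that $\varphi^*$ is equivariant for the $G_{\underline\xi}\times G_{\underline\eta}$-actions on its source and target: the action on $\mo^*$ modifies only the $e_i$ (by $(g_1,g_2)$-conjugation) together with the pair $(c,d)$ as in diagram \eqref{cd}, and forgetting $(c,d)$ recovers exactly the action on $\md^*$. Consequently $\varphi^*$ descends to the orbit spaces and yields $\varphi^\wedge$.

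Next, fix a representative $(V_\delta,V_\beta,e_1,e_2,e_3,e_4)\in \md^*_{(V_\gamma,V_\delta,V_\alpha,V_\beta)}$. The fibre of $\varphi^\wedge$ over its class is the quotient of $(\varphi^*)^{-1}(V_\delta,V_\beta,e_1,e_2,e_3,e_4)$ by the stabilizer of the representative in $G_{\underline\xi}\times G_{\underline\eta}$, namely by $\hom(V_\gamma,V_\alpha)\times\hom(V_\delta,V_\beta)$. Under the homeomorphism $\theta^*$ from Proposition \ref{fibre2}, a point $(V_\delta,V_\beta,e_1,e_2,e_3,e_4,c,d)^*$ corresponds to an orbit $(a,b,a',b')^*\in Q(\xi,\eta,\xi',\eta')^*$, and its stabilizer under the above vector-group action is identified with $\operatorname{Stab}_G((a,b,a',b')^*)$, itself a vector group.

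Finally, the quotient of an affine space by a linear action of a vector group with vector-group stabilizers is again an affine space, whose dimension equals the difference
\[
\dim_\bbc\ext^1(V_\gamma,V_\beta)-\dim_\bbc\bigl(\hom(V_\gamma,V_\alpha)\times\hom(V_\delta,V_\beta)\bigr)+\dim_\bbc\operatorname{Stab}_G((a,b,a',b')^*),
\]
which is precisely the formula asserted. The main thing to check carefully is that the action in question is indeed linear (so that standard vector-group quotient dimension counts apply) and that the stabilizer identification above is uniform over the fibre; both follow from the explicit description of the fibre of $\varphi^*$ via $W(V_\delta,V_\beta,e_1,e_2,e_3,e_4)$ and from the compatibility of $\theta^*$ with the relevant $(g_1,g_2)$-actions recorded in diagram \eqref{cd}.
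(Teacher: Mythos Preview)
Your proposal is correct and follows essentially the same route as the paper: the proposition is stated after the paper has already (in the discussion immediately preceding it) identified the fibre of $\varphi^*$ with the affine space $W(V_\delta,V_\beta,e_1,e_2,e_3,e_4)$ of dimension $\dim_\bbc\ext^1(V_\gamma,V_\beta)$, computed the stabilizer of a point in $\md^*$ as $\hom(V_\gamma,V_\alpha)\times\hom(V_\delta,V_\beta)$, and noted that the stabilizer of $(V_\delta,V_\beta,e_1,e_2,e_3,e_4,c,d)^*$ under this vector-group action is $\operatorname{Stab}_G((a,b,a',b')^*)$. Your only addition is the explicit remark that the action is linear so that the affine-space quotient dimension count applies, which the paper leaves implicit.
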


    Let us summarize the discussion above in the following diagram:
\begin{equation}\label{picture}
\xymatrix{\mathrm{Ext}
^1(V_{\xi'},V_{\eta'})&
Q(\xi,\eta,\xi',\eta')^{\wedge}\ar[l]_-{\phi^{\wedge}}\ar[r]^-{\theta^{\wedge}}&\mo(\xi,\eta,\xi',\eta')^{\wedge}\ar[r]^-{\varphi^{\wedge}}&
\md(\xi,\eta,\xi',\eta')^{\wedge}}.
\end{equation}

The following theorem can be viewed as a degenerated version of
Green's formula.
\begin{Thm}\label{degenerated}
For fixed $\xi, \eta, \xi', \eta',$ we have
$$
g_{\xi\eta}^{\xi'\oplus
\eta'}=\int_{\alpha,\beta,\delta,\gamma;\alpha\oplus
\gamma=\xi,\beta\oplus \delta=
\eta}g_{\gamma\delta}^{\xi'}g_{\alpha \beta}^{\eta'}
$$
\end{Thm}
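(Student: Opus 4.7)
The plan is to compute $\chi(Q(\xi,\eta,\xi',\eta')^{\wedge})$ in two different ways using the diagram \eqref{picture} and equate them.

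First, I would exploit $\phi^{\wedge}$. Stratify $\ext^1(V_{\xi'}, V_{\eta'})$ by the isomorphism class $\lambda$ of the middle term. By Proposition~\ref{fibre1}, the fiber of $\phi^{\wedge}$ over any point of $\ext^1(V_{\xi'}, V_{\eta'})_{\lambda}$ is an affine bundle over $\mv(V_\xi, V_\eta; V_\lambda)$, hence has Euler characteristic $g_{\xi\eta}^{\lambda}$. Proposition~\ref{Euler2} then gives
$$\chi(Q^{\wedge}) = \sum_{\lambda} g_{\xi\eta}^{\lambda}\, \chi\bigl(\ext^1(V_{\xi'}, V_{\eta'})_{\lambda}\bigr).$$
Lemma~\ref{directsum} kills every summand except $\lambda = \xi' \oplus \eta'$. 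For the surviving one, the $\bbc^{*}$-action on $\ext^1(V_{\xi'},V_{\eta'})$ preserves middle-term strata and is free off the origin, so $\ext^{1}(V_{\xi'}, V_{\eta'})_{\xi'\oplus\eta'}$ splits as $\{0\}$ plus a free $\bbc^{*}$-set of Euler characteristic zero, yielding $\chi = 1$. Thus $\chi(Q^{\wedge}) = g_{\xi\eta}^{\xi'\oplus\eta'}$, the left-hand side of the theorem.

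Second, I would exploit $\theta^{\wedge}$ and $\varphi^{\wedge}$. By Proposition~\ref{fibre2}, $Q^{\wedge} \cong \mo^{\wedge}$, and by Proposition~\ref{fibre3}, $\varphi^{\wedge}$ has affine-space fibers, so $\chi(Q^{\wedge}) = \chi(\mo^{\wedge}) = \chi(\md^{\wedge})$. Consider the forgetful map sending a class in $\md^{\wedge}$ to the submodule pair $(V_\delta \subseteq V_{\xi'},\, V_\beta \subseteq V_{\eta'})$. Stratifying its target by the iso-class tuple $(\alpha,\beta,\gamma,\delta)$ (with $\delta = [V_\delta]$, $\gamma = [V_{\xi'}/V_\delta]$, $\beta = [V_\beta]$, $\alpha = [V_{\eta'}/V_\beta]$) decomposes it as a disjoint union of strata $\mv(V_\gamma, V_\delta; V_{\xi'}) \times \mv(V_\alpha, V_\beta; V_{\eta'})$, each of Euler characteristic $g_{\gamma\delta}^{\xi'}\, g_{\alpha\beta}^{\eta'}$. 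The fiber over $(V_\delta, V_\beta)$ consists of equivalence classes of the realization data $(e_1,e_2,e_3,e_4)$ under the $G_{\underline{\xi}} \times G_{\underline{\eta}}$-action $(e_1,e_2,e_3,e_4)\mapsto (g_2 e_1, e_2 g_2^{-1}, g_1 e_3, e_4 g_1^{-1})$, which is precisely the equivalence of exact sequences $0\to V_\beta\to V_\eta\to V_\delta\to 0$ and $0\to V_\alpha\to V_\xi\to V_\gamma\to 0$ with fixed outer terms; the fiber is therefore $\ext^1(V_\delta, V_\beta)_{V_\eta} \times \ext^1(V_\gamma, V_\alpha)_{V_\xi}$. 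By Lemma~\ref{directsum} and the same $\bbc^{*}$-argument, this fiber has Euler characteristic $0$ unless $\eta = \delta \oplus \beta$ and $\xi = \alpha \oplus \gamma$, and equals $1$ otherwise. Applying Proposition~\ref{Euler2} yields
$$\chi(\md^{\wedge}) = \int_{\alpha,\beta,\gamma,\delta;\, \alpha\oplus\gamma=\xi,\, \beta\oplus\delta=\eta} g_{\gamma\delta}^{\xi'}\, g_{\alpha\beta}^{\eta'}.$$

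Equating the two expressions for $\chi(Q^{\wedge})$ proves the theorem. The most delicate step is the fiber identification above: one must match the prescribed $G_{\underline{\xi}} \times G_{\underline{\eta}}$-action on $(e_1,e_2,e_3,e_4)$ with the standard Yoneda equivalence of short exact sequences having fixed outer terms, so that Lemma~\ref{directsum} may be invoked to trim the sum to the direct-sum strata $\alpha\oplus\gamma=\xi$ and $\beta\oplus\delta=\eta$.
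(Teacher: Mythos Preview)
Your proposal is correct and follows essentially the same route as the paper: both compute $\chi(Q(\xi,\eta,\xi',\eta')^{\wedge})$ two ways via diagram \eqref{picture}, using Propositions \ref{fibre1}--\ref{fibre3} together with Lemma \ref{directsum} and Proposition \ref{Euler2}, and then equate. The paper is terser (it encodes your fiber analysis of $\md^{\wedge}$ in the single line $\chi(\md^{\wedge})=\int g_{\gamma\delta}^{\xi'}g_{\alpha\beta}^{\eta'}h^{\gamma\alpha}_{\xi}h^{\delta\beta}_{\eta}$ and then applies Lemma \ref{directsum} to both sides simultaneously), but the argument is the same. One small wording slip: in Proposition \ref{fibre1} the fiber $\widetilde{\mv}$ is the \emph{base} of an affine-space fibration with total space $\mv(V_\xi,V_\eta;V_\lambda)$, not the other way around; the Euler-characteristic conclusion $\chi(\text{fiber})=g_{\xi\eta}^{\lambda}$ is unaffected.
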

\begin{proof}
We note that
$$
\chi(\md(\xi,\eta,\xi',\eta')^{\wedge})=\int_{\alpha,\beta,\delta,\gamma}g_{\gamma\delta}^{\xi'}g_{\alpha\beta}^{\eta'}h^{\gamma\alpha}_{\xi}h^{\delta\beta}_{\eta}
$$
Because the Euler characteristic of an affine space is
$1,$ we have
$$
\int_{\lambda}h^{\xi'\eta'}_{\lambda}g_{\xi\eta}^{\lambda}=\int_{\alpha,\beta,\delta,\gamma}g_{\gamma\delta}^{\xi'}g_{\alpha\beta}^{\eta'}h^{\gamma\alpha}_{\xi}h^{\delta\beta}_{\eta}
$$
Using Proposition \ref{Euler2} and Lemma \ref{directsum}, we
simplify the identity as
$$
h^{\xi'\eta'}_{\xi'\oplus \eta'}g_{\xi\eta}^{\xi'\oplus
\eta'}=\int_{\alpha,\beta,\delta,\gamma, \alpha\oplus \gamma=\xi,
\beta\oplus
\gamma=\eta}g_{\gamma\delta}^{\xi'}g_{\alpha\beta}^{\eta'}h^{\gamma\alpha}_{\xi}h^{\delta\beta}_{\eta}
$$
i.e.,
$$
g_{\xi\eta}^{\xi'\oplus
\eta'}=\int_{\beta,\delta}g_{\xi'/\delta,\delta}^{\xi'}g_{\eta'/\beta
,\beta}^{\eta'}.
$$
\end{proof}

\subsection{}
We define $EF(\xi,\eta,\xi',\eta')$ to be the set
$$
\{(\varepsilon, L'(d))\mid \varepsilon\in \mathrm{Ext}^1(V_{\xi'},
V_{\eta'})_{L(d)}, L'(d)\subseteq L(d), L'(d)\simeq V_{\eta},
L(d)/L'(d)\simeq V_{\xi}\}
$$
and let $FE(\xi,\eta,\xi',\eta')$ be the set
$$
\{(V'_{\xi'}, V'_{\eta'}, \varepsilon_1, \varepsilon_2)\mid
V'_{\xi'}\subseteq V_{\xi}, V'_{\eta'}\subseteq V_{\eta'},
\varepsilon_1\in \mathrm{Ext}^1(V'_{\xi'}, V'_{\eta'})_{V_{\eta}},
\varepsilon_2\in\mathrm{Ext}^1(V_{\xi'}/V'_{\xi'},
V_{\eta'}/V'_{\eta'})_{V_{\xi}} \}.
$$

The projection $$p_1: EF(\xi,\eta,\xi',\eta')\rightarrow
\mathrm{Ext}^1(V_{\xi'}, V_{\eta'})$$ satisfies that the fibre of
any $\varepsilon\in \mathrm{Ext}^1(V_{\xi'}, V_{\eta'})_{L(d)}$ is
isomorphic to $\mv(V_{\xi}, V_{\eta}; L(d))$.

Comparing with Proposition \ref{fibre1}, we have a morphism
$$
EF(\xi,\eta,\xi',\eta')\rightarrow Q(\xi,\eta,\xi',
\eta')^{\wedge}
$$
satisfying the fibre  of $(a, b, a', b')^{\wedge}$ is isomorphic
to an affine space of dimension
$$
\mathrm{dim}_{\bbc}\mathrm{Hom}(V_{\xi'},
V_{\eta'})-\mathrm{dim}_{\bbc}\mathrm{Hom}(\mathrm{Coker}b'a,
\mathrm{Ker}ba')-\mathrm{dim}_{\bbc}G((a,b,a',b')^{*}).
$$

We also have a natural homeomorphism
$$
FE(\xi,\eta,\xi',\eta')\rightarrow \md(\xi, \eta, \xi',
\eta')^{\wedge}.
$$
Hence, using Proposition \ref{fibre1}, \ref{fibre2} and
\ref{fibre3}, we have
\begin{Prop}\label{the map}
There is a natural morphism
$$
\rho: EF(\xi,\eta,\xi',\eta')\rightarrow FE(\xi,\eta,\xi',\eta')
$$
satisfying the fibre for $(V'_{\xi'}, V'_{\eta'}, \varepsilon_1,
\varepsilon_2)$ is isomorphic to an affine space of dimension
$$
\mathrm{dim}_{\bbc}\mathrm{Hom}(V_{\xi'},
V_{\eta'})-\mathrm{dim}_{\bbc}\mathrm{Hom}(V_{\gamma}, V_{\beta})+
\mathrm{dim}_{\bbc}\mathrm{Ext}(V_{\gamma},
V_{\beta})-\mathrm{dim}_{\bbc}\mathrm{Hom}(V_{\gamma},
V_{\alpha})-\mathrm{dim}_{\bbc}\mathrm{Hom}(V_{\delta},
V_{\beta}).
$$
where $V_{\beta}\simeq V'_{\eta'}, V_{\delta}\simeq V'_{\xi'}$ and
$V_{\alpha}\simeq V_{\eta'}/V'_{\eta'}, V_{\gamma}\simeq
V_{\xi'}/V'_{\xi'}.$
\end{Prop}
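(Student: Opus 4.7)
The plan is to construct $\rho$ as the composite along the chain of maps gathered in diagram \eqref{picture}, reading $EF$ as sitting above $Q^{\wedge}$ on the left and $FE$ as sitting above $\md^{\wedge}$ on the right. Explicitly, I would factor $\rho$ as
$$
EF(\xi,\eta,\xi',\eta') \xrightarrow{\alpha} Q(\xi,\eta,\xi',\eta')^{\wedge} \xrightarrow{\theta^{\wedge}} \mo(\xi,\eta,\xi',\eta')^{\wedge} \xrightarrow{\varphi^{\wedge}} \md(\xi,\eta,\xi',\eta')^{\wedge} \xrightarrow{\beta} FE(\xi,\eta,\xi',\eta'),
$$
where $\theta^{\wedge}$ and $\varphi^{\wedge}$ are the maps of Propositions \ref{fibre2} and \ref{fibre3}, and $\alpha,\beta$ are the two obvious natural maps.

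First I would define $\alpha$: given $(\varepsilon, L'(d)) \in EF$, the extension $\varepsilon$ supplies a pair $(a,b)$ with cokernel $V_{\xi'}$ and kernel $V_{\eta'}$, while the subobject $L'(d) \simeq V_{\eta}$ with quotient $V_{\xi}$ supplies the transverse pair $(a',b')$. Together these form an element of $Q(V_{\lambda})$, and passing to the $G_{\underline{\lambda}} \times G_{\underline{\xi}} \times G_{\underline{\eta}}$-orbit gives $\alpha$. The fibre of $\alpha$ over $(a,b,a',b')^{\wedge}$ parametrizes the residual choices for $(a,b)$ inside $\ext^1(V_{\xi'},V_{\eta'})$, and exactly the argument preceding Proposition \ref{fibre1} (compare the stabilizer calculation through $\phi_1$ and $\phi_2$) shows that this fibre is an affine space of dimension $\dim\hom(V_{\xi'},V_{\eta'}) - \dim\hom(\cok b'a,\ker ba') - \dim G((a,b,a',b')^*)$. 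Dually, $\beta$ sends $(V_{\delta},V_{\beta},e_1,e_2,e_3,e_4)$ to the tuple $(V_{\delta},V_{\beta},[\varepsilon_1],[\varepsilon_2])$ where $\varepsilon_1,\varepsilon_2$ are the extension classes of the two horizontal outer short exact sequences of diagram \eqref{guru}; since these extension classes contain precisely the data recorded by the outer cross modulo the $G_{\underline{\xi}} \times G_{\underline{\eta}}$-action, $\beta$ is a homeomorphism.

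Composing, the fibre of $\rho$ over a point of $FE$ is the product of the $\alpha$-fibre with the $\varphi^{\wedge}$-fibre, each of which is an affine space, and in both cases the fibres are globally affine over their bases by the cited propositions. Adding the dimensions from $\alpha$ and from Proposition \ref{fibre3},
$$
\bigl(\dim\hom(V_{\xi'},V_{\eta'}) - \dim\hom(\cok b'a,\ker ba') - \dim G((a,b,a',b')^*)\bigr)
$$
$$
\; + \; \bigl(\dim\ext^1(V_{\gamma},V_{\beta}) + \dim G((a,b,a',b')^*) - \dim\hom(V_{\gamma},V_{\alpha}) - \dim\hom(V_{\delta},V_{\beta})\bigr),
$$
the contributions of $\dim G((a,b,a',b')^*)$ cancel, which is the essential numerical point of the whole construction. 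Using the identifications $\cok b'a \simeq V_{\gamma}$ (since $V_{\gamma} = V_{\xi'}/V_{\delta} = V_{\xi}/\Im b'a$ via $e_4$) and $\ker ba' \simeq V_{\beta}$ (coming from $V_{\beta} = \ker b'a \simeq \ker ba'$ in the definition of $\theta^{*}$), what remains is exactly the dimension claimed in the statement.

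The main obstacle I anticipate is bookkeeping: one must verify that the two affine-bundle structures (from $\alpha$ and from $\varphi^{\wedge}$) do compose to a genuine affine bundle over $FE$, which comes down to checking compatibility of the stabilizer sequences appearing on both sides; once the canonical isomorphisms $\cok b'a \cong V_{\gamma}$ and $\ker ba' \cong V_{\beta}$ are set up explicitly from diagram \eqref{guru}, the clean cancellation of $\dim G((a,b,a',b')^*)$ is what makes the claim hold on the nose.
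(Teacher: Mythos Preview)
Your proposal is correct and follows essentially the same route as the paper: the paragraphs immediately preceding the proposition set up exactly the map $EF\to Q^{\wedge}$ with the affine fibre you describe and the homeomorphism $FE\simeq \md^{\wedge}$, and the paper then invokes Propositions \ref{fibre1}, \ref{fibre2}, \ref{fibre3} to conclude. Your dimension count, the cancellation of $\dim G((a,b,a',b')^{*})$, and the identifications $\cok b'a\simeq V_{\gamma}$, $\ker ba'\simeq V_{\beta}$ are all exactly what is intended.
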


Now we consider the action of $\mathbb{C}^{*}$ on
$EF(\xi,\eta,\xi',\eta')$ and $FE(\xi,\eta,\xi',\eta')$.

\nd (1) For $t\in \bbc^{*}$ and $(\varepsilon, L'(d))\in EF(\xi,
\eta, \xi', \eta'),$ we know $t.\varepsilon\in
\mathrm{Ext}^1(V_{\xi'}, V_{\eta'})_{L(t.d)}$ and
$L(d)=V_{\xi'}\oplus V_{\eta'}$ as a direct sum of vector spaces.
Recall that $L(t.d)$ is defined in Section 4.1. Define
$$L'(t.d):=\{(v', tv'')\mid (v',v'')\in L'(d)\}.$$
Then $L'(t.d)\subseteq L(t.d).$ Hence, we define
$$
t.(\varepsilon, L'(d))=(t.\varepsilon, L'(t.d)).
$$
The orbit space is denoted by $\widehat{EF}(\xi,\eta,\xi',\eta')$.
A point $(\varepsilon, L'(d))$ is stable, i.e., $t.(\varepsilon,
L'(d))=(\varepsilon, L'(d))$ for any $t\in \bbc^{*}$ if and only
if
$$
L(d)=V_{\xi'}\oplus V_{\eta'} \mbox{ and }L'(d)=(L'(d)\cap
V_{\xi'})\oplus(L'(d)\cap V_{\eta'})
$$
Note that the above direct sums are the direct sums of modules.
The set of stable points in $EF(\xi,\eta,\xi',\eta')$ is denoted
by $EF_{s}(\xi,\eta,\xi',\eta').$ The action of $\bbc^{*}$ on the
set of non-stable points in $EF(\xi, \eta, \xi', \eta')$ is free.
We denote the orbit space by $\mathbb{P}EF(\xi, \eta, \xi',
\eta').$ Of course, we have
$$
\widehat{EF}(\xi,\eta,\xi',\eta')=EF_{s}(\xi,\eta,\xi',\eta')\bigcup
\mathbb{P}EF(\xi, \eta, \xi', \eta').
$$
The orbit of $(\varepsilon, L'(d))$ in  $\mathbb{P}EF(\xi, \eta,
\xi', \eta')$ is denoted by $\mathbb{P}(\varepsilon, L'(d))$.

\nd (2) For $t\in \bbc^{*}$ and $(V'_{\xi'}, V'_{\eta'},
\varepsilon_1, \varepsilon_2)\in FE(\xi, \eta, \xi', \eta'),$ we
define
$$
t.(V'_{\xi'}, V'_{\eta'}, \varepsilon_1,
\varepsilon_2)=(V'_{\xi'}, V'_{\eta'}, t.\varepsilon_1,
t.\varepsilon_2).
$$
The orbit space is denoted by $\widehat{FE}(\xi,\eta,\xi',\eta').$
A point $(V'_{\xi'}, V'_{\eta'}, \varepsilon_1, \varepsilon_2)$ in
$FE(\xi, \eta, \xi', \eta')$ is stable if and only if
$\varepsilon_1=\varepsilon_2=0.$ The set of stable points in
$FE(\xi, \eta, \xi', \eta')$ is denoted by $FE_{s}(\xi, \eta,
\xi', \eta').$ The action of $\bbc^{*}$ on the set of non-stable
points in $FE(\xi, \eta, \xi', \eta')$ is free. We denote the
orbit space by $\mathbb{P}FE(\xi, \eta, \xi', \eta').$ Of course,
we have
$$
\widehat{FE}(\xi,\eta,\xi',\eta')=FE_{s}(\xi,\eta,\xi',\eta')\bigcup
\mathbb{P}FE(\xi, \eta, \xi', \eta').
$$
The orbit of $(V'_{\xi'}, V'_{\eta'}, \varepsilon_1,
\varepsilon_2)$ in  $\mathbb{P}FE(\xi, \eta, \xi', \eta')$ is
denoted by $\mathbb{P}(V'_{\xi'}, V'_{\eta'}, \varepsilon_1,
\varepsilon_2)$.

The morphism $\rho$ induces the morphism
$$
\hat{\rho}: \widehat{EF}(\xi,\eta,\xi',\eta')\rightarrow
\widehat{FE}(\xi,\eta,\xi',\eta')
$$
We consider its restriction to
$\mathbb{P}EF(\xi,\eta,\xi',\eta').$
$$
\hat{\rho}\mid_{\mathbb{P}EF(\xi,\eta,\xi',\eta')}:
\mathbb{P}EF(\xi,\eta,\xi',\eta')\rightarrow
\widehat{FE}(\xi,\eta,\xi',\eta')
$$
For any $(V'_{\xi'}, V'_{\eta'}, 0, 0)\in
FE_{s}(\xi,\eta,\xi',\eta')\subseteq
\widehat{FE}(\xi,\eta,\xi',\eta'),$ we have
$$
(\hat{\rho}\mid_{\mathbb{P}EF(\xi,\eta,\xi',\eta')})^{-1}((V'_{\xi'},
V'_{\eta'}, 0, 0))=\mathbb{P}(\rho^{-1}(V'_{\xi'}, V'_{\eta'}, 0,
0)\setminus(V'_{\xi'}\oplus V'_{\eta'}, 0)).
$$
It is actually the projective space of the affine space in
Proposition \ref{the map}.  For any $\mathbb{P}(V'_{\xi'},
V'_{\eta'}, \varepsilon_1, \varepsilon_2)\in
\mathbb{P}FE(\xi,\eta,\xi',\eta'),$
$(\hat{\rho}\mid_{\mathbb{P}EF(\xi,\eta,\xi',\eta')})^{-1}(\mathbb{P}(V'_{\xi'},
V'_{\eta'}, \varepsilon_1, \varepsilon_2))$ is isomorphic to the
affine space in Proposition \ref{the map}. Now we  compute the
Euler characteristics. By Proposition \ref{Euler} and the above
discussion of the fibres, we have
$$
\hspace{-0cm}\chi(\mathbb{P}EF(\xi,\eta,\xi',\eta'))=$$$$\int_{(V'_{\xi'},
V'_{\eta'}, 0, 0)\in
FE_{s}(\xi,\eta,\xi',\eta')}[d(\xi',\eta')-d(\gamma,\alpha)-d(\delta,\beta)-\lr{\gamma,\beta}]g_{\gamma\delta}^{\xi'}g_{\alpha\beta}^{\eta'}
+\chi(\mathbb{P}FE(\xi,\eta,\xi',\eta')).
$$
where $d(\gamma,\alpha)
=\dim_{\mathbb{C}}\hom_{\Lambda}(V_{\gamma},V_{\alpha})$ and the
Euler form
$\lr{\gamma,\beta}=\dim_{\mathbb{C}}\hom(V_{\gamma},V_{\beta})-\dim_{\mathbb{C}}\ext^1(V_{\gamma},V_{\beta}).$
On the other hand, we know
$$\chi(\mathbb{P}EF(\xi,\eta,\xi',\eta'))=$$$$\int_{\alpha,\beta,\delta,\gamma,\alpha\oplus \gamma=\xi,
\beta\oplus\delta=
\eta}\chi(\mathbb{P}\overline{\mathcal{V}}(V_{\xi},V_{\eta};V_{\xi'}\oplus
   V_{\eta'}))+\int_{\lambda\neq \xi'\oplus
\eta'}\chi(\mathbb{P}Ext^{1}(V_{\xi'},V_{\eta'})_{\lambda})g_{\xi\eta}^{\lambda}.
$$ Therefore, we have the following theorem, which can be viewed as
a geometric version of Green's formula under the $\bbc^*$-action.
\begin{Thm}\label{projgreen}
For fixed $\xi, \eta, \xi', \eta',$ we have
\begin{eqnarray*}
 &&\int_{\lambda\neq \xi'\oplus
\eta'}\chi(\mathbb{P}\mathrm{Ext}^{1}(V_{\xi'},V_{\eta'})_{\lambda})g_{\xi\eta}^{\lambda}=\\
   &&\int_{\alpha,\beta,\delta,\gamma,\alpha\oplus \gamma\neq \xi
\mbox{ or }\beta\oplus \delta\neq
\eta}\chi(\mathbb{P}(\mathrm{Ext}^{1}(V_{\gamma},V_{\alpha})_{\xi}\times
\mathrm{Ext}^1(V_{\delta},V_{\beta})_{\eta}))g_{\gamma\delta}^{\xi'}g_{\alpha\beta}^{\eta'}\\
   &+& \int_{\alpha,\beta,\delta,\gamma,\alpha\oplus \gamma=\xi,
\beta\oplus\delta=
\eta}[d(\xi',\eta')-d(\gamma,\alpha)-d(\delta,\beta)-\lr{\gamma,\beta}]g_{\gamma\delta}^{\xi'}g_{\alpha\beta}^{\eta'}\\
   &-& \int_{\alpha,\beta,\delta,\gamma,\alpha\oplus \gamma=\xi,
\beta\oplus\delta=
\eta}\chi(\mathbb{P}\overline{\mathcal{V}}(V_{\xi},V_{\eta};V_{\xi'}\oplus
   V_{\eta'})).
\end{eqnarray*}

\end{Thm}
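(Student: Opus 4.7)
The plan is to compute $\chi(\mathbb{P}EF(\xi,\eta,\xi',\eta'))$ in two different ways and equate the results. The two canonical $\bbc^*$-equivariant morphisms out of $EF$ are the projection $p_1 : EF \to \ext^1(V_{\xi'}, V_{\eta'})$ and the morphism $\rho: EF \to FE$ of Proposition \ref{the map}. After descending to the $\bbc^*$-quotients, each yields a fibration-type decomposition of $\chi(\mathbb{P}EF)$; setting the two expressions equal and isolating the target quantity produces the claimed identity. Throughout, the computations will rest on Proposition \ref{Euler}, Proposition \ref{Euler2}, and the formula $\chi(\mathbb{P}^{n-1}) = n$ for the projective space of an affine fibre of dimension $n$.

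The first step is to identify the stable loci of the $\bbc^*$-actions. A point $(\varepsilon, L'(d)) \in EF$ is $\bbc^*$-stable iff $\varepsilon = 0$ (forcing $L(d) = V_{\xi'} \oplus V_{\eta'}$) and $L'(d)$ decomposes as $(L'(d) \cap V_{\xi'}) \oplus (L'(d) \cap V_{\eta'})$; this identifies $EF_s$ with $\mv_1$ from \eqref{decomposition}. Dually, $(V'_{\xi'}, V'_{\eta'}, \varepsilon_1, \varepsilon_2) \in FE$ is stable iff $\varepsilon_1 = \varepsilon_2 = 0$, which forces $V_\eta \simeq V_\delta \oplus V_\beta$ and $V_\xi \simeq V_\gamma \oplus V_\alpha$, parametrising $FE_s$ by pairs $(V_\delta \subseteq V_{\xi'}, V_\beta \subseteq V_{\eta'})$ with $\alpha \oplus \gamma = \xi$ and $\beta \oplus \delta = \eta$. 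The $\bbc^*$-action is free on the respective non-stable complements, so the quotients $\mathbb{P}EF$ and $\mathbb{P}FE$ are genuine geometric quotients.

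For the first computation, I would stratify $EF$ by whether $\varepsilon = 0$ or not. The non-stable part over $\varepsilon = 0$ is $\overline{\mv}(V_\xi, V_\eta; V_{\xi'} \oplus V_{\eta'})$, contributing $\mathbb{P}\overline{\mv}(V_\xi, V_\eta; V_{\xi'} \oplus V_{\eta'})$ after quotienting. Over $\varepsilon \neq 0$, the map $p_1$ descends to $\mathbb{P}EF|_{\varepsilon \neq 0} \to \mathbb{P}\ext^1(V_{\xi'}, V_{\eta'})$ whose fibre over $[\varepsilon]$ is $\mv(V_\xi, V_\eta; L(d))$; stratifying the base by the isomorphism class of the middle term (a $\bbc^*$-invariant) gives $\int_\lambda \chi(\mathbb{P}\ext^1(V_{\xi'}, V_{\eta'})_\lambda) g_{\xi\eta}^\lambda$, and this reduces to $\lambda \neq \xi' \oplus \eta'$ because the only extension of $V_{\xi'}$ by $V_{\eta'}$ with split middle term is the trivial one, so $\mathbb{P}\ext^1(V_{\xi'}, V_{\eta'})_{\xi'\oplus \eta'}$ is empty. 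In total,
$$\chi(\mathbb{P}EF) = \chi(\mathbb{P}\overline{\mv}(V_\xi, V_\eta; V_{\xi'} \oplus V_{\eta'})) + \int_{\lambda \neq \xi' \oplus \eta'} \chi(\mathbb{P}\ext^1(V_{\xi'}, V_{\eta'})_\lambda) g_{\xi\eta}^\lambda.$$

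For the second computation, I would push $\rho$ to $\hat\rho : \widehat{EF} \to \widehat{FE}$ and restrict to $\mathbb{P}EF$. Over a stable target $(V'_{\xi'}, V'_{\eta'}, 0, 0) \in FE_s$, the fibre of $\hat\rho|_{\mathbb{P}EF}$ is the affine fibre of $\rho$ from Proposition \ref{the map} with its origin (the split configuration $(V'_{\xi'} \oplus V'_{\eta'}, 0)$) removed and then projectivised, so its Euler characteristic equals its affine dimension $d(\xi',\eta') - d(\gamma,\alpha) - d(\delta,\beta) - \lr{\gamma,\beta}$. Over a non-stable base point $\mathbb{P}(V'_{\xi'}, V'_{\eta'}, \varepsilon_1, \varepsilon_2) \in \mathbb{P}FE$, the fibre is an affine space of Euler characteristic $1$, so this part contributes $\chi(\mathbb{P}FE) = \int_{\alpha \oplus \gamma \neq \xi \text{ or } \beta \oplus \delta \neq \eta} \chi(\mathbb{P}(\ext^1(V_\gamma, V_\alpha)_\xi \times \ext^1(V_\delta, V_\beta)_\eta)) g_{\gamma\delta}^{\xi'} g_{\alpha\beta}^{\eta'}$. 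Equating the two expressions for $\chi(\mathbb{P}EF)$ and solving for the left-hand side of the theorem gives the claim. The main obstacle is the precise bookkeeping on the stable base: verifying that the removed point in the affine fibre is exactly the split extension, so that what remains projectivises to $\mathbb{P}^{d-1}$ with Euler characteristic $d$ rather than $d+1$ or $d-1$, together with tracking the stabiliser contributions from Propositions \ref{fibre1}, \ref{fibre2}, \ref{fibre3} as one passes from $EF$ to $\widehat{EF}$.
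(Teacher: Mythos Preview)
Your proposal is correct and follows essentially the same route as the paper: the argument in Section~4.6 computes $\chi(\mathbb{P}EF(\xi,\eta,\xi',\eta'))$ in two ways, once via the projection $p_1$ to $\mathrm{Ext}^1(V_{\xi'},V_{\eta'})$ (separating the $\varepsilon=0$ stratum, which gives $\mathbb{P}\overline{\mathcal V}$, from the $\varepsilon\neq 0$ stratum) and once via the induced map $\hat\rho$ to $\widehat{FE}$ (separating stable targets, where the fibre is the projectivisation of the affine space of Proposition~\ref{the map}, from non-stable targets, where the fibre is affine), and then equates the two expressions. Your identification of the stable loci, the fibre dimensions, and the bookkeeping through Propositions~\ref{fibre1}--\ref{fibre3} all match the paper exactly.
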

\section{Application to Caldero-Keller formula}\label{application}

\subsection{}
Let $Q$ be a quiver with vertex set $Q_0=\{1,2,\cdots, n\}$
containing no oriented cycles and $A=\bbc Q$ be the path algebra of
$Q.$ For $i\in Q_0,$ we denote by $P_i$ the corresponding
indecomposable projective $\bbc Q$-module and by $S_i$ the
corresponding simple module. Let $\mathbb{Q}(x_1,\cdots,x_n)$ be a
transcendental extension of $\mathbb{Q}.$ Define the map
$$
X_{?}:\mbox{obj}(\mathrm{mod}A)\rightarrow
\mathbb{Q}(x_1,\cdots,x_n)
$$
by:
$$
X_{M}=\sum_{\ue}\chi(Gr_{\ue}(M))x^{\tau(\ue) -
\underline{\mathrm{dim}}M + \ue}
$$
where $\tau$ is the Auslander-Reiten translation on the Grothendieck
group $K_0(\md^{b}(Q))$ and, for $v\in \mathbb{Z}^n$, we put
$$
x^v = \prod_{i=1}^n x_i^{\langle \underline{\mathrm{dim}}S_i,
v\rangle}
$$ and $Gr_{\ue}(M)$ is the $\ue$-Grassmannian of $M,$ i.e. the
variety of submodules of $M$ with dimension vector $\ue.$  This
definition is equivalent to \cite{Hubery2005}
$$
X_{M}=\int_{\alpha,\beta}g_{\alpha\beta}^{M}x^{\underline{\beta}
R+\underline{\alpha} R'-\underline{\mathrm{dim}}M}
$$
where the matrices $R=(r_{ij})$ and $R'=(r'_{ij})$ satisfy
$r_{ij}=\mathrm{dim}_{\bbc}\mathrm{Ext}^1(S_i,S_j)$ and
$r'_{ij}=\mathrm{dim}_{\bbc}\mathrm{Ext}^1(S_j,S_i)$ for $i,j\in
Q_0.$ Here we recall $g_{\alpha\beta}^M:=\chi(\mv((V_{\alpha},
V_{\beta}; M))$ which is defined in Section 4.1.
Note that (see \cite{Hubery2005})
$$
(\underline{\mathrm{\dim}}{P})R=\underline{\mathrm{\dim}}\rad P
\quad \quad
(\underline{\mathrm{\dim}}{I})R'=\underline{\mathrm{\dim}}I-\underline{\mathrm{\dim}}\mbox{soc}
I
$$

We consider the set
$$
Gr_{\ue}(\bbe_{\ud}):=\{(M, M_1)\mid M\in \bbe_{\ud}, M_1\in
Gr_{\ue}(M)\}.$$ This is a closed subset of
$\bbe_{\ud}\times\prod_{i=1,...,n}Gr_{e_i}(k^{d_i}).$
Here, we simply use the notation $\bbe_{\ud}$ instead of
$\bbe_{\ud}(Q)$ without confusion.

\begin{Prop}
The function $X_{?}\mid_{\bbe_{\ud}}$ is $G$-invariant
constructible.
\end{Prop}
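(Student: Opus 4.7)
The plan is to reduce the assertion to the constructibility of the single Euler characteristic function $M \mapsto \chi(Gr_{\ue}(M))$ on $\bbe_{\ud}$ for each admissible $\ue$, and then invoke the pushforward formalism (Theorem \ref{Joyce}).

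First I would observe that on $\bbe_{\ud}$ we have $\underline{\dim}M = \ud$ constant, so the monomial $x^{\tau(\ue) - \underline{\dim}M + \ue}$ in each term of the defining sum for $X_M$ depends only on $\ue$ (and on the fixed $\ud$), not on $M$. Moreover $Gr_{\ue}(M) = \emptyset$ unless $0 \leq \ue \leq \ud$ componentwise, so the sum defining $X_M$ is indexed by a finite set determined by $\ud$ alone. It therefore suffices to prove that for each such $\ue$ the function
$$f_{\ue} : \bbe_{\ud} \longrightarrow \bbq, \qquad M \longmapsto \chi(Gr_{\ue}(M))$$
is $G_{\ud}$-invariant and constructible.

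For $G_{\ud}$-invariance: if $g \in G_{\ud}$ and $M' = g \cdot M$, then sending $M_1 \subseteq M$ to $g M_1 \subseteq M'$ gives an isomorphism of algebraic varieties $Gr_{\ue}(M) \xrightarrow{\sim} Gr_{\ue}(M')$, whence $f_{\ue}(M) = f_{\ue}(M')$. For constructibility, I would use the incidence variety $Gr_{\ue}(\bbe_{\ud})$ introduced just above the proposition, which is closed in $\bbe_{\ud} \times \prod_i Gr_{e_i}(\bbc^{d_i})$ (it is cut out by the linear conditions $M_{\alpha}(M_1)_{s(\alpha)} \subseteq (M_1)_{t(\alpha)}$ for $\alpha \in Q_1$). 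Let $\pi : Gr_{\ue}(\bbe_{\ud}) \to \bbe_{\ud}$ be the first projection; its fibre over $M$ is precisely $Gr_{\ue}(M)$. Applying $\pi_{*}$ to the characteristic function $1_{Gr_{\ue}(\bbe_{\ud})} \in M(\bbe_{\ud} \times \prod_i Gr_{e_i}(\bbc^{d_i}))$, Theorem \ref{Joyce} gives
$$\pi_{*}(1_{Gr_{\ue}(\bbe_{\ud})})(M) \;=\; \chi(\pi^{-1}(M)) \;=\; \chi(Gr_{\ue}(M)) \;=\; f_{\ue}(M),$$
and the same theorem asserts that $\pi_{*}(1_{Gr_{\ue}(\bbe_{\ud})})$ is constructible.

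Finally, $X_{?}|_{\bbe_{\ud}}$ is a finite $\mathbb{Q}(x_1,\ldots,x_n)$-linear combination of the $G_{\ud}$-invariant constructible functions $f_{\ue}$, so it is itself $G_{\ud}$-invariant, takes only finitely many values in $\mathbb{Q}(x_1,\ldots,x_n)$, and the preimage of each value is a finite Boolean combination of the level sets of the $f_{\ue}$, hence constructible. There is no substantive obstacle; the only point that requires care is checking that $Gr_{\ue}(\bbe_{\ud})$ is genuinely closed in the ambient product, so that $1_{Gr_{\ue}(\bbe_{\ud})}$ is a legitimate constructible function to which pushforward applies.
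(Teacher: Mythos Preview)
Your proof is correct and follows essentially the same route as the paper: both reduce to showing that $M\mapsto\chi(Gr_{\ue}(M))$ is constructible by applying the pushforward $\pi_{*}$ (Theorem \ref{Joyce}) to the constant function on the incidence variety $Gr_{\ue}(\bbe_{\ud})$ along the projection $\pi$ to $\bbe_{\ud}$. You supply a few extra details (explicit $G_{\ud}$-invariance, finiteness of the index set, closure of the incidence variety) that the paper leaves implicit, but the argument is the same.
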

\begin{proof} Obviously it is $G$-invariant.
Consider the canonical morphism $\pi:
Gr_{\ue}(\bbe_{\ud})\rightarrow \bbe_{\ud}$ sending $(M, M_1)$ to
$M.$ It is clear that $\pi^{-1}(M)=Gr_{\ue}(M).$ Let
$1_{Gr_{\ue}(\bbe_{\ud})}$ be the constant function on
$Gr_{\ue}(\bbe_{\ud}),$ by Theorem \ref{Joyce},
$(\pi)_{*}(1_{Gr_{\ue}(\bbe_{\ud})})$ is constructible. We know
that
$$
(\pi)_{*}(1_{Gr_{\ue}(\bbe_{\ud})})(M)=\chi(Gr_{\ue}(M))
$$
So there are finitely many $\chi(Gr_{\ue}(M))$ for $M\in
\bbe_{\ud}.$
\end{proof}
\begin{Prop}\label{finite}
For fixed dimension vectors $\ue$ and $\ud,$ the set
$$
\{g_{XY}^{E}\mid E\in \bbe_{\ud}, Y\in \bbe_{\ue},X\in
\bbe_{\ud-\ue}\}
$$
is a finite set.
\end{Prop}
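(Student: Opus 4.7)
The plan is to show that the function
$$
g : \bbe_{\ud} \times \bbe_{\ue} \times \bbe_{\ud-\ue} \longrightarrow \bbq, \qquad (E, Y, X) \longmapsto g_{XY}^{E},
$$
is a constructible function in the sense of Section \ref{basic}. Since by definition a constructible function is constant on each piece of a finite stratification, it takes only finitely many values, and the proposition follows immediately. The strategy is to realize $g$ as the pushforward of a characteristic function along a suitable projection, so that Theorem \ref{Joyce} applies.

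The first step is to form the incidence variety
$$
\mathcal{M} := \{(Y, X, (E, X_1)) \mid Y \in \bbe_{\ue},\ X \in \bbe_{\ud-\ue},\ (E, X_1) \in Gr_{\ue}(\bbe_{\ud}),\ X_1 \cong Y,\ E/X_1 \cong X\},
$$
which sits inside the product of $\bbe_{\ue}$, $\bbe_{\ud-\ue}$, and the relative Grassmannian $Gr_{\ue}(\bbe_{\ud})$ already introduced just above in the paper. To see that $\mathcal{M}$ is constructible, I would express it as the image under projection of a larger parameter space in which one also records explicit module isomorphisms $Y \xrightarrow{\sim} X_1$ and $E/X_1 \xrightarrow{\sim} X$, and then invoke Chevalley's theorem; alternatively one can stratify $\bbe_{\ue}$ and $\bbe_{\ud-\ue}$ by Lemma \ref{Rosenlicht} into finitely many $G$-invariant locally closed pieces and verify the claim piece by piece.

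The projection $p : \mathcal{M} \to \bbe_{\ud} \times \bbe_{\ue} \times \bbe_{\ud-\ue}$ sending $(Y, X, (E, X_1)) \mapsto (E, Y, X)$ then has fiber
$$
p^{-1}(E, Y, X) = \mv(X, Y; E)
$$
by the very definition of $\mv$. Applying Theorem \ref{Joyce} to the constructible function $1_{\mathcal{M}}$ yields that $p_{*}(1_{\mathcal{M}})$ is constructible, and by the definition of the pushforward
$$
p_{*}(1_{\mathcal{M}})(E, Y, X) = \chi(\mv(X, Y; E)) = g_{XY}^{E}.
$$
Thus $g$ itself is constructible, hence has finite image, which is what we wanted.

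The main obstacle is verifying the constructibility of $\mathcal{M}$, i.e.\ showing that the two isomorphism conditions cut out a constructible subset of the ambient space. While this is a standard fact about module varieties, making it precise in the present $G$-equivariant setting requires some care with orbit closures or with the iterative Rosenlicht stratification of Lemma \ref{Rosenlicht}. Once this is in place, the rest of the argument is a direct application of the pushforward formalism already developed in Section \ref{basic}.
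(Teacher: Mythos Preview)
Your proposal is correct and follows essentially the same strategy as the paper: exhibit $(E,Y,X)\mapsto g_{XY}^E$ as a pushforward, via Theorem \ref{Joyce}, of the characteristic function of a Grassmannian-type incidence space. The one point of difference is precisely at your ``main obstacle'': rather than imposing the isomorphism conditions to cut out $\mathcal{M}$, the paper sends a point of $Gr_{\ue}(\bbe_{\ud})$ to chosen representatives of the sub and quotient in $\bbe_{\ue}\times\bbe_{\ud-\ue}\times\bbe_{\ud}$ and then composes with the Rosenlicht quotient map by $G_{\ue}\times G_{\ud-\ue}$ from Lemma \ref{Rosenlicht}, so that the fiber over the orbit of $(Y,X,M)$ is $\mv(X,Y;M)$ on the nose and no separate constructibility verification for the isomorphism locus is required.
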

\begin{proof}
Let $M\in \bbe_{\ud}$. For any submodule $M_1$ of dimension vector
$\ue$ of $M,$ by the knowledge of linear algebra, there exist
unique $(\bbc^{\ue},x)\in \bbe_{\ue}$ isomorphic to $M_1$ and
$(\bbc^{\ud-\ue}, x')\in \bbe_{\ud-\ue}$ isomorphic to $M/M_1,$
this deduces the following morphisms:
$$\xymatrix {Gr_{\ue}(M)\ar[r]^-{\pi_1} &\bbe_{\ue}\times
\bbe_{\ud-\ue}\times \bbe_{\ud}\ar[r]^-{\pi_2}&
\bigcup_{i}\phi_i(U_i)}
$$
where $\bbe_{\ue}\times \bbe_{\ud-\ue}\times
\bbe_{\ud}=\bigcup_{i}U_i$ is a finite stratification with respect
to the action of the algebraic group $G_{\ue}\times G_{\ud-\ue}$
and $\phi_i: U_i\rightarrow \phi_i(U_i)$ is the geometric quotient
for any $i,$ and $\pi_2=\bigcup_i\phi_i.$ For any $(Y,X,M)\in
\bbe_{\ue}\times \bbe_{\ud-\ue}\times \bbe_{\ud}$,$$
\chi((\pi_2\pi_1)^{-1}(\pi_2((Y,X,M))))=g_{XY}^{M}.$$ Consider the
constant function $1_{Gr_{\ue}(M)}$ on $Gr_{\ue}(M),$ by Theorem
\ref{Joyce}, $(\pi_2\pi_1)_{*}(1_{Gr_{\ue}(M)})$ is constructible.
Hence, there are finitely many $g_{XY}^{M}$ for $(X,Y,M)\in
\bbe_{\ue}\times \bbe_{\ud-\ue}\times \bbe_{\ud}.$
\end{proof}
\begin{Prop}\label{finite02}
For fixed $M\in
\bbe_{\underline{\xi'}},N\in\bbe_{\underline{\eta'}},$ the set
$$\{\chi(\mathrm{Ext}^1(M,N)_{E})\mid E\in \bbe_{\underline{\xi'}+\underline{\eta'}}\}$$
is a finite set.
\end{Prop}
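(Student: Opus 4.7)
The plan is to mimic the pushforward argument used for Proposition \ref{finite}, augmented with the Rosenlicht stratification to handle the fact that $\mathrm{Ext}^1(M,N)_E$ depends only on the isomorphism class of $E$.

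First I would use the identification of $\mathrm{Ext}^1(M,N)$ with $E(M,N)$ recalled in Section 4.1, under which $\mathrm{Ext}^1(M,N)_E$ corresponds to the constructible subset $\{d \in E(M,N) : L(d) \simeq E\}$ of the finite-dimensional vector space $E(M,N)$. Fixing once and for all a basis identifying the underlying vector space of $M \oplus N$ with $\bbc^{\underline{\xi'}+\underline{\eta'}}$, the explicit matrix description of $L(d)_\alpha$ makes the assignment $d \mapsto L(d)$ into a morphism of varieties
$$\Phi: E(M,N) \to \bbe_{\underline{\xi'}+\underline{\eta'}}.$$

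Next, by iterated application of Lemma \ref{Rosenlicht} (exactly as spelled out in Section \ref{basic}), I obtain a finite stratification $\bbe_{\underline{\xi'}+\underline{\eta'}} = \bigsqcup_{i=1}^{l} U_i$ into $G_{\underline{\xi'}+\underline{\eta'}}$-invariant locally closed subsets, each admitting a geometric quotient $\phi_i: U_i \to \phi_i(U_i)$. Setting $X_i := \Phi^{-1}(U_i)$, which is constructible in $E(M,N)$, I consider for each $i$ the morphism $\phi_i \circ \Phi|_{X_i}: X_i \to \phi_i(U_i)$. Since the geometric quotient identifies $G$-orbits with points, the fibre over $\phi_i(E)$ is
$$\Phi^{-1}(G_{\underline{\xi'}+\underline{\eta'}} \cdot E) = \{d \in E(M,N) : L(d) \simeq E\} = \mathrm{Ext}^1(M,N)_E.$$
Applying Theorem \ref{Joyce} to $1_{X_i}$, the pushforward $(\phi_i \circ \Phi|_{X_i})_*(1_{X_i})$ is a constructible function on $\phi_i(U_i)$ whose value at $\phi_i(E)$ is precisely $\chi(\mathrm{Ext}^1(M,N)_E)$. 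A constructible function takes finitely many values, so $\{\chi(\mathrm{Ext}^1(M,N)_E) : E \in U_i\}$ is finite; summing over the $l$ strata then gives the result.

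The only real obstacle is bookkeeping: one must check that $\Phi$ is a genuine morphism (immediate from the matrix description of $L(d)$) and that the fibre computation identifies $\mathrm{Ext}^1(M,N)_E$ on the nose, i.e.\ that the description as $\{d : L(d) \simeq E\}$ matches the original definition of $\mathrm{Ext}^1(M,N)_E$ under the identification of Section 4.1. Beyond this, the argument is a direct transcription of the pushforward technique used to prove Proposition \ref{finite}, with the orbit-level identification supplied by Lemma \ref{Rosenlicht}.
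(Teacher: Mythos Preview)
Your proposal is correct and follows essentially the same approach as the paper: the paper composes the middle-term map $f: \mathrm{Ext}^1(M,N)\to \bbe_{\underline{\xi'}+\underline{\eta'}}$ (via the identification with $E(M,N)$) with the Rosenlicht stratification/quotient maps, then defers to the pushforward argument of Proposition~\ref{finite}. You have merely spelled out the details that the paper omits.
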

\begin{proof}
Consider the morphism:
$$
\mathrm{Ext}^1(M,N)\xrightarrow{f}
\bbe_{\underline{\xi'}+\underline{\eta'}}\xrightarrow{g}
\bigcup_{j}\phi_j(V_j)
$$
where $\bbe_{\underline{\xi'}+\underline{\eta'}}=\bigcup_{j}V_j$
is a finite stratification with respect to the action of the
algebraic group $G_{\underline{\xi'}+\underline{\eta'}}$ and
$\phi_j: V_j\rightarrow \phi_j(V_j)$ is a geometric quotient for
any $i,$ and $f$ sends any extension to the middle term of the
extension. Here, $f$ is a morphism by identification between
$\mathrm{Ext}^{1}(M,N)$ and $E(M,N)$ at the beginning of Section
4.  The remaining discussion is almost the same as in Proposition
\ref{finite}. We omit it.
\end{proof}

\subsection{}
We now consider the cluster category, i.e. the orbit category
$\md^{b}(Q)/F$ with $F=[1]\tau^{-1},$ where $\tau$ is the
AR-translation of $\md^{b}(Q).$ Each
object $M$ in $\md^{b}(Q)/F$ can be uniquely decomposed into the
form: $M=M_0\oplus P_M[1]=M_0\oplus\tau P_M$ where $M_0\in\mod A$
and $P_M$ is projective in $\mod A.$ Now we can extend the map
$X_?$ as in \cite{CK2005}, see also \cite{Hubery2005}
by the rule: $X_{\tau P}=x^{\udim {P/\rad P}}$ for projective
$A$-module and $X_{M\oplus N}=X_{M}X_{N}.$ Then we have a
well-defined map $$X_?:
\mathrm{obj}(\md^{b}(Q)/F)\ra\bbq(x_1,\cdots,x_n).$$

Let $\overline{\bbe}_{\ud}$ be the orbit space of $\bbe_{\ud}$
under the action of $G_{\ud}.$ Note that all the integrals below
are over $\overline{\bbe}_{\ud}$ for some corresponding dimension
vector $\ud.$ Note also that in $\mod A$ we have $d(\gamma,
\alpha)=\dim\hom(V_{\gamma},V_{\alpha})$ and $d^1(\gamma,
\alpha)=\dim\ext(V_{\gamma},V_{\alpha}).$ We say that $P_0$ is the
projective direct summand of $V_{\xi'}$ if $V_{\xi'}\simeq
V'_{\xi'}\oplus P_0$ and no direct summand of $V'_{\xi'}$ is
projective.

The cluster algebra corresponding to the cluster category
$\md^{b}(Q)/F$ is the subalgebra of $\bbq(x_1,\cdots,x_n)$
generated by $\{X_{M}, X_{\tau P}|M\in\mod A, P\in\mod A \
\mbox{is projective}\ \}.$ The following theorem gives a
generalization of the cluster multiplication formula in
\cite{CK2005}. The idea of the proof follows the work
\cite{Hubery2005} of Hubery.

\begin{Thm}\label{clustertheorem}
(1) For any $A$-modules $V_{\xi'}$, $V_{\eta'}$ we have
$$\hspace{0cm}d^1(\xi', \eta')X_{V_{\xi'}} X_{V_{\eta'}} =\int_{\lambda\neq
\xi'\oplus \eta'} \chi(\mathbb{P}\mathrm{Ext}^1(V_{\xi'},
V_{\eta'})_{V_{\lambda}})
X_{V_{\lambda}}$$$$+\int_{\gamma,\beta,\iota}\chi(\mathbb{P}\mathrm{Hom}(V_{\eta'},\tau
V_{\xi'})_{V_{\beta}[1]\oplus \tau V_{\gamma}'\oplus
I_0})X_{V_{\gamma}}X_{V_{\beta}}x^{\underline{\mathrm{dim}}
\mathrm{soc}I_0}
$$
where  $I_0\in \iota$ is injective and
$V_{\gamma}=V_{\gamma}'\oplus P_0,$ $P_0$ is the projective direct
summand of $V_{\xi'}.$

(2) For any $A$-module $V_{\xi'}$ and  $P\in \rho$ is projective
Then
 $$d(\rho, \xi')X_{V_{\xi'}}x^{\underline{\mathrm{dim}}P/radP}=\int_{\delta,\iota'}\chi(\mathbb{P}\mathrm{Hom}(V_{\xi'},I)_{V_{\delta}[1]\oplus
I'})X_{V_{\delta}}x^{\underline{\mathrm{dim}}\mathrm{soc}I'}$$$$+\int_{\gamma,\rho'}\chi(\mathbb{P}\mathrm{Hom}(P,V_{\xi'})_{P'[1]\oplus
V_{\gamma}})X_{V_{\gamma}}x^{\underline{\mathrm{dim}}P'/radP'}
 $$
where $I=D \hom(P,A),$ and $I'\in \iota'$ injective, $P'\in \rho'$
projective.
\end{Thm}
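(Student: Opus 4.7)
The plan is to follow Hubery's strategy in \cite{Hubery2005}, but carried out geometrically via the projective Green's formula and higher-order associativity proved above. First I would expand everything through the formula
$$X_{M}=\int_{\alpha,\beta}g_{\alpha\beta}^{M}\,x^{\underline{\beta}R+\underline{\alpha}R'-\underline{\dim}M},$$
so that $X_{V_{\xi'}}X_{V_{\eta'}}$ becomes a double integral against products $g_{\gamma\alpha}^{\xi'}g_{\delta\beta}^{\eta'}$, while each $X_{V_{\lambda}}$ on the right-hand side likewise unfolds into an integral against $g_{\gamma'\delta'}^{\lambda}$. After this expansion, the identity (1) reduces to an equality of coefficients attached to each monomial $x^{v}$, indexed by the four classes $(\alpha,\beta,\gamma,\delta)$.

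For the $\lambda\neq\xi'\oplus\eta'$ part of the right-hand side, I would apply Theorem \ref{projgreen}: the coefficient $\int_{\lambda\neq\xi'\oplus\eta'}\chi(\mathbb{P}\mathrm{Ext}^{1}(V_{\xi'},V_{\eta'})_{\lambda})g_{\xi\eta}^{\lambda}$ obtained after expansion equals a sum of the generic projective Ext-product term, a Hom/Ext correction along the diagonal $\alpha\oplus\gamma=\xi,\ \beta\oplus\delta=\eta$, and a subtraction of $\chi(\mathbb{P}\overline{\mathcal{V}})$. The generic term will cancel against the corresponding piece of $d^{1}(\xi',\eta')X_{V_{\xi'}}X_{V_{\eta'}}$, so everything is reorganized into contributions indexed by the diagonal. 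To absorb the $\chi(\mathbb{P}\overline{\mathcal{V}})$ piece into the second term of the right-hand side, I would invoke Theorem \ref{associativity2} with $L_{1}=V_{\eta'}$ and $L_{2}=\tau V_{\xi'}$, combined with the Auslander-Reiten isomorphism $\mathrm{Hom}(V_{\eta'},\tau V_{\xi'})\cong D\,\mathrm{Ext}^{1}(V_{\xi'},V_{\eta'})$; this rewrites the projective Hom-Euler characteristic $\chi(\mathbb{P}\mathrm{Hom}(V_{\eta'},\tau V_{\xi'})_{V_{\beta}[1]\oplus\tau V_{\gamma}'\oplus I_{0}})$ as the desired integral of $g_{\xi\eta}^{\xi'\oplus\eta'}$-type numbers against Grassmannians. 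Theorem \ref{degenerated} then handles the degenerate stratum $\lambda=\xi'\oplus\eta'$. Part (2), involving a projective $P$ and its injective $I=D\hom(P,A)$, follows the same pattern with Green's formula replaced by its ``projective object'' limit, where extensions into a projective and out of an injective are controlled by the identities $(\underline{\dim}P)R=\underline{\dim}\,\mathrm{rad}\,P$ and $(\underline{\dim}I)R'=\underline{\dim}I-\underline{\dim}\,\mathrm{soc}\,I$ recorded just after the definition of $X_{?}$.

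The hardest step will be the monomial bookkeeping on the torus $\mathbb{Q}(x_{1},\ldots,x_{n})$. Specifically, one must verify that the $R$- and $R'$-shifts produced on the left of Green's formula match those produced on the right after the $\tau$-twist and after extracting the projective summand $P_{0}$ of $V_{\xi'}$; this requires showing that the correction $x^{\underline{\dim}\,\mathrm{soc}\,I_{0}}$ together with the decomposition $V_{\gamma}=V_{\gamma}'\oplus P_{0}$ exactly absorbs the projective piece $x^{\underline{\dim}P_{0}/\mathrm{rad}P_{0}}$ that Green's formula cannot detect. A closely related subtlety is that the stable and non-stable strata in $\widehat{EF}$ and $\widehat{FE}$ occurring in Theorem \ref{projgreen} must be matched to the split/non-split decomposition of $V_{\lambda}$ appearing under the $\mathbb{C}^{*}$-action, so that the ``non-diagonal'' terms really do correspond to $\lambda\neq\xi'\oplus\eta'$. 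Once this combinatorial matching is set up, Propositions \ref{finite} and \ref{finite02} guarantee all the integrals are finite sums, and the identity of coefficients follows term-by-term.
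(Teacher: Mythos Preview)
Your overall strategy matches the paper's: expand $X_{V_\lambda}$, apply Theorem~\ref{projgreen} to the Ext term, apply Theorem~\ref{associativity2} to the Hom term, and match monomials. However, you misassign which tool handles which piece, and one of the tools you invoke is not used at all.

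First, the $\chi(\mathbb{P}\overline{\mathcal{V}})$ term is \emph{not} absorbed via Theorem~\ref{associativity2}. The paper computes it directly: the map $j_2$ sending $V^1\subseteq V_{\xi'}\oplus V_{\eta'}$ to $(V^1\cap V_{\eta'},\,V^1/(V^1\cap V_{\eta'}))$ has fibre an affine space of dimension $d(\delta,\alpha)$ (this is Hubery's Corollary~8), so on $\overline{\mathcal{V}}$ the fibre is that space minus the origin, and after the $\bbc^{*}$-quotient one gets $\int d(\delta,\alpha)\,g_{\gamma\delta}^{\xi'}g_{\alpha\beta}^{\eta'}x^{\cdots}$. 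Second, Theorem~\ref{degenerated} is never invoked; the stratum $\lambda=\xi'\oplus\eta'$ is simply excluded from $S_1$ by definition and never reappears. Third, the numerical hinge you omit is the identity
\[
d^1(\gamma,\alpha)+d^1(\delta,\beta)+d(\xi',\eta')-d(\gamma,\alpha)-d(\delta,\beta)-\langle\gamma,\beta\rangle
= d^1(\xi',\eta')+\langle\delta,\alpha\rangle,
\]
which, combined with the $\overline{\mathcal{V}}$ computation above, collapses the three projgreen terms to $S_1=\int[d^1(\xi',\eta')-d^1(\delta,\alpha)]g_{\gamma\delta}^{\xi'}g_{\alpha\beta}^{\eta'}x^{\cdots}$. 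Theorem~\ref{associativity2} is then applied \emph{only} to $S_2$ (the Hom term), twice --- once via pushout, once via pullback, as in the two-storey diagram in the paper --- yielding $S_2=\int d^1(\delta,\alpha)\,g_{\gamma\delta}^{\xi'}g_{\alpha\beta}^{\eta'}x^{\cdots}$, whence $S_1+S_2=d^1(\xi',\eta')X_{V_{\xi'}}X_{V_{\eta'}}$.

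For part~(2), Green's formula plays no role whatsoever: the paper uses Theorem~\ref{associativity2} alone on each of the two Hom terms, together with $\underline{\dim}\,\mathrm{soc}\,I=\underline{\dim}\,P/\mathrm{rad}\,P$ and $\chi(\mathbb{P}\hom(P,M))=\chi(\mathbb{P}\hom(P,V_{\widetilde{\xi'}}))+\chi(\mathbb{P}\hom(V_{\widetilde{\xi'}},I))$. Your monomial bookkeeping instinct is correct --- the line $\underline{\dim}\,\mathrm{soc}\,I_0-(\underline{\beta}+\underline{\gamma})=(\underline{\xi'}-\underline{\gamma})R+(\underline{\eta'}-\underline{\beta})R'-(\underline{\xi'}+\underline{\eta'})$ is indeed the key exponent check --- but it arises inside the $S_2$ computation, not as a separate reconciliation step.
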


\begin{proof}
 We set
$$ S_1:= \int_{\lambda\in
\overline{\bbe}_{\underline{\xi'}+\underline{\eta'}},\lambda\neq
\xi'\oplus \eta'
}\chi(\mathbb{P}\ext^1(V_{\xi'},V_{\eta'})_{V_{\lambda}})
X_{V_{\lambda}}
$$
By Proposition \ref{finite},
$$
S_1=\int_{\xi,\eta,\lambda\neq \xi'\oplus
\eta'}\chi(\mathbb{P}\ext^1(V_{\xi'},V_{\eta'})_{V_{\lambda}})g_{\xi\eta}^{\lambda}x^{\underline{\eta}
R+\underline{\xi} R'-(\underline{\xi'}+\underline{\eta'})}
$$
Using Theorem \ref{projgreen}, we have
\begin{eqnarray*}
 &&\int_{\xi,\eta,\lambda\neq \xi'\oplus
\eta'}\chi(\mathbb{P}\ext^1(V_{\xi'},V_{\eta'})_{V_{\lambda}})g_{\xi\eta}^{\lambda}x^{\underline{\eta}
R+\underline{\xi} R'-(\underline{\xi'}+\underline{\eta'})}=\\
   &&\int_{\alpha,\beta,\delta,\gamma,\xi,\eta,\alpha\oplus \gamma\neq \xi
\mbox{ or }\beta\oplus \delta\neq
\eta}\chi(\mathbb{P}(\ext^{1}(V_{\gamma},V_{\alpha})_{V_{\xi}}\times
\ext^1(V_{\delta},V_{\beta})_{V_{\eta}}))g_{\gamma\delta}^{\xi'}g_{\alpha\beta}^{\eta'}x^{\underline{\eta}
R+\underline{\xi} R'-(\underline{\xi'}+\underline{\eta'})}\\
   &+& \int_{\alpha,\beta,\delta,\gamma,\xi,\eta,\alpha\oplus \gamma=\xi,
\beta\oplus\delta=
\eta}[d(\xi',\eta')-d(\gamma,\alpha)-d(\delta,\beta)-\lr{\gamma,\beta}]g_{\gamma\delta}^{\xi'}g_{\alpha\beta}^{\eta'}x^{\underline{\eta}
R+\underline{\xi} R'-(\underline{\xi'}+\underline{\eta'})}\\
   &-& \int_{\alpha,\beta,\delta,\gamma,\xi,\eta,\alpha\oplus \gamma=\xi,
\beta\oplus\delta=
\eta}\chi(\mathbb{P}\overline{V}(V_{\xi},V_{\eta};V_{\xi'}\oplus
   V_{\eta'}))x^{\underline{\eta}
R+\underline{\xi} R'-(\underline{\xi'}+\underline{\eta'})}
\end{eqnarray*}
We come to simplify every term following \cite{Hubery2005}. For
fixed $\alpha,\beta,\delta,\gamma,$
$$
\int_{\xi,\eta,\alpha\oplus \gamma\neq \xi \mbox{ or }\beta\oplus
\delta\neq
\eta}\chi(\mathbb{P}(\ext^{1}(V_{\gamma},V_{\alpha})_{V_{\xi}}\times
\ext^1(V_{\delta},V_{\beta})_{V_{\eta}}))=d^1(\gamma,\alpha)+d^1(\delta,\beta)
$$

$$
\int_{\alpha,\beta,\delta,\gamma,\xi,\eta,\alpha\oplus \gamma\neq
\xi \mbox{ or }\beta\oplus \delta\neq
\eta}\chi(\mathbb{P}(\ext^{1}(V_{\gamma},V_{\alpha})_{V_{\xi}}\times
\ext^1(V_{\delta},V_{\beta})_{V_{\eta}}))g_{\gamma\delta}^{\xi'}g_{\alpha\beta}^{\eta'}x^{\underline{\eta}
R+\underline{\xi} R'-(\underline{\xi'}+\underline{\eta'})}
$$
$$
\hspace{-4cm}=\int_{\alpha,\beta,\delta,\gamma}[d^1(\gamma,\alpha)+d^1(\delta,\beta)])g_{\gamma\delta}^{\xi'}g_{\alpha\beta}^{\eta'}x^{\underline{\eta}
R+\underline{\xi} R'-(\underline{\xi'}+\underline{\eta'})}
$$
Moreover,
$$
d^1(\gamma,\alpha)+d^1(\delta,\beta)+d(\xi',\eta')-d(\gamma,\alpha)-d(\delta,\beta)-\lr{\gamma,\beta}=d^1(\xi',\eta')
+\lr{\delta,\alpha}
$$

 Hence,
\begin{eqnarray*}
 &&\int_{\xi,\eta,\lambda\neq \xi'\oplus
\eta'}\chi(\mathbb{P}\ext^{1}(V_{\xi'},V_{\eta'})_{V_{\lambda}})g_{\xi\eta}^{\lambda}x^{\underline{\eta}
R+\underline{\xi} R'-(\underline{\xi'}+\underline{\eta'})}=\\
   &+& \int_{\alpha,\beta,\delta,\gamma}[d^1(\xi',\eta')+\lr{\delta,\alpha}]g_{\gamma\delta}^{\xi'}g_{\alpha\beta}^{\eta'}x^{\underline{\eta}
R+\underline{\xi} R'-(\underline{\xi'}+\underline{\eta'})}\\
   &-& \int_{\alpha,\beta,\delta,\gamma,\xi,\eta,\alpha\oplus \gamma=\xi,
\beta\oplus\delta=
\eta}\chi(\mathbb{P}\overline{V}(V_{\xi},V_{\eta};V_{\xi'}\oplus
   V_{\eta'}))x^{\underline{\eta}
R+\underline{\xi} R'-(\underline{\xi'}+\underline{\eta'})}
\end{eqnarray*}
As for the last term, consider the following diagram, it may be
compared with diagram \eqref{decom}).
\begin{equation}
\bigcup_{\alpha,\beta,\gamma,\delta}\mv(V_{\alpha},V_{\beta};
V_{\eta'})\times
\mv(V_{\gamma},V_{\delta};V_{\xi'})\xrightarrow{j_1}\bigcup_{\xi,\eta}\mv(V_{\xi},V_{\eta};V_{\xi'}\oplus
V_{\eta'})
\end{equation}
sending $(V^1_{\eta'}\subseteq V_{\eta'},V^1_{\xi'}\subseteq
V_{\xi'})$ to $(V^1_{\xi'}\oplus V^1_{\eta'}\subseteq
V_{\xi'}\oplus V_{\eta'}).$ And
\begin{equation}
\bigcup_{\xi,\eta}\mv(V_{\xi},V_{\eta};V_{\xi'}\oplus
V_{\eta'})\xrightarrow{j_2}\bigcup_{\alpha,\beta,\gamma,\delta}\mv(V_{\alpha},V_{\beta};
V_{\eta'})\times \mv(V_{\gamma},V_{\delta};V_{\xi'})
\end{equation}
sending $(V^1\subseteq V_{\xi'}\oplus V_{\eta'})$ to $(V^1\bigcap
V_{\eta'}\subseteq V_{\eta'}, V^1/V^1\bigcap V_{\eta'}\subseteq
V_{\xi'}).$ The map $j_1$ is an embedding and
$$
\bigcup_{\xi,\eta}\mv(V_{\xi},V_{\eta};V_{\xi'}\oplus
V_{\eta'})\setminus
\mbox{Im}j_1=\bigcup_{\xi,\eta}\overline{\mv}(V_{\xi},V_{\eta};V_{\xi'}\oplus
   V_{\eta'})
$$
The fibre of $j_2$ is isomorphic to a vector space
$V(\delta,\alpha)$ of dimension $d(\delta,\alpha)$ ( see
\cite[Corollary 8]{Hubery2005}). If we restrict $j_2$ to
$\bigcup_{\xi,\eta}\overline{\mv}(V_{\xi},V_{\eta};V_{\xi'}\oplus
   V_{\eta'}),$ then the fibre is isomorphic to $V(\delta,\alpha)\setminus
   \{0\}.$ Under the action of $\bbc^*,$ by Proposition \ref{Euler},
   we have
\begin{eqnarray}
   && \int_{\alpha,\beta,\delta,\gamma,\xi,\eta,\alpha\oplus \gamma=\xi,
\beta\oplus\delta=
\eta}\chi(\mathbb{P}\overline{\mv}(V_{\xi},V_{\eta};V_{\xi'}\oplus
   V_{\eta'}))x^{\underline{\eta}
R+\underline{\xi} R'-(\underline{\xi'}+\underline{\eta'})}\nonumber \\
  &=& \int_{\alpha,\beta,\delta,\gamma}d(\delta,\alpha)g_{\gamma\delta}^{\xi'}g_{\alpha\beta}^{\eta'}x^{\underline{\eta}
R+\underline{\xi}
R'-(\underline{\xi'}+\underline{\eta'})}.\nonumber
\end{eqnarray}
Therefore,
$$
S_1=\int_{\alpha,\beta,\delta,\gamma}[d^1(\xi',\eta')-d^1(\delta,\alpha)]g_{\gamma\delta}^{\xi'}g_{\alpha\beta}^{\eta'}x^{\underline{\eta}
R+\underline{\xi} R'-(\underline{\xi'}+\underline{\eta'})}.
$$
There is a natural $\bbc^*$-action on $\hom(V_{\eta'},\tau
V_{\xi'})_{V_{\beta}[1]\oplus \tau V_{\gamma}'\oplus I_0}\setminus
\{0\}$ by left multiplication, the orbit space is
$\mathbb{P}\hom(V_{\eta'},\tau V_{\xi'})_{V_{\beta}[1]\oplus \tau
V_{\gamma}'\oplus I_0}$. Define
$$
S_2:=\int_{\gamma,\beta,\iota,\kappa,l,\mu,\theta}\chi(\mathbb{P}\hom(V_{\eta'},\tau
V_{\xi'})_{V_{\beta}[1]\oplus \tau V_{\gamma}'\oplus
I_0})g_{\kappa
l}^{\gamma}g_{\theta\mu}^{\beta}x^{(\underline{l}+\underline{\mu})R+(\underline{\kappa}+\underline{\theta})R'-(\underline{\beta}+\underline{\gamma})+\mathrm{\underline{dim}}\mathrm{soc}I_0}
$$
where $I_0\in \iota.$  The above definition is well-defined by
Proposition \ref{finite} and \ref{finite02}. We note that
$$
\underline{\mathrm{dim}}
\mathrm{soc}I_0-(\underline{\beta}+\underline{\gamma})=(\underline{\xi'}-\underline{\gamma})R+(\underline{\eta'}-
\underline{\beta})R'-(\underline{\xi}'+\underline{\eta}').
$$
Since $\hom(V_{\eta'},\tau V_{\xi'}')_{V_{\beta}[1]\oplus \tau
V_{\gamma}'\oplus I_0}=\mathcal{V}(\tau V_{\gamma}'\oplus
I_0,V_{\beta};V_{\eta'},\tau V_{\xi'}),$ we can apply Theorem
\ref{associativity2} to the following diagram twice:
\begin{equation}
\xymatrix{&V_{\mu}\ar[d]\ar@{=}[r]&V_{\mu}\ar[d]& \tau
V_{\kappa}\ar@{=}[r]&\tau
V_{\kappa}&\\
0\ar[r]& V_{\beta}\ar[r]\ar[d]& V_{\eta'}\ar[r]\ar[d]& \tau
V_{\xi'}'\ar[r]\ar@{=}[d]\ar[u]& \tau V_{\gamma}'\oplus
I_0\ar[r]\ar[u]\ar@{=}[d]& 0\\
0\ar[r]& X\ar[r]\ar@{=}[d]& V_{\widetilde{\beta}}\ar[r]\ar@{=}[d]&
\tau
V_{\xi'}'\ar[r] & \tau V_{\gamma}'\oplus I_0\ar[r]& 0\\
0\ar[r]& X\ar[r]& V_{\widetilde{\beta}}\ar[r]& \tau
V_{\widetilde{\gamma}}'\ar[r]\ar[u]& \tau L\oplus I_0
\ar[u]\ar[r]& 0}
\end{equation}
where $V_{\widetilde{\beta}}$ and $\tau V_{\widetilde{\gamma}}'$
are the corresponding pullback and pushout. Moreover,
$$
(\underline{\xi}'-\underline{\gamma}+\underline{l}+\underline{\mu})R+(\underline{\eta}'-\underline{\beta}+
\underline{\kappa}+\underline{\theta})R'-(\underline{\xi}'+\underline{\eta}')=(\underline{\widetilde{\gamma}}+
\underline{\mu})R+(\underline{\kappa}+\underline{\widetilde{\beta}})R'-(\underline{\xi}'+\underline{\eta}')
$$
Hence,
\begin{eqnarray}
  S_2 &=& \int_{\widetilde{\gamma},\widetilde{\beta},\kappa,\mu,\iota,l,\theta}\chi(\mathbb{P}\hom(V_{\widetilde{\beta}},\tau
V_{\widetilde{\gamma}}')_{X[1]\oplus \tau L\oplus I_0})g_{\kappa
\widetilde{\gamma}}^{\xi'}g_{\widetilde{\beta}\mu}^{\eta'}x^{(\underline{\widetilde{\gamma}}+\underline{\mu})R+(\underline{\kappa}+\underline{\widetilde{\beta}})R'-(\underline{\xi'}+\underline{\eta}')}
\nonumber \\
 &=&\int_{\widetilde{\gamma},\widetilde{\beta},\kappa,\mu}\chi(\mathbb{P}\hom(V_{\widetilde{\beta}},\tau
V_{\widetilde{\gamma}}'))g_{\kappa
\widetilde{\gamma}}^{\xi'}g_{\widetilde{\beta}\mu}^{\eta'}x^{(\underline{\widetilde{\gamma}}+\underline{\mu})R+(\underline{\kappa}+\underline{\widetilde{\beta}})R'-(\underline{\xi}'+\underline{\eta}')}
\nonumber \\
  &=&\int_{\alpha,\beta,\delta,\gamma}d^1(\delta,\alpha)g_{\gamma\delta}^{\xi'}g_{\alpha\beta}^{\eta'}x^{\underline{\eta}
R+\underline{\xi}
R'-(\underline{\xi'}+\underline{\eta'})}.\nonumber
\end{eqnarray}
Hence,
$$
S_1+S_2=d^1(\xi',\eta')\int_{\alpha,\beta,\delta,\gamma}g_{\gamma\delta}^{\xi'}g_{\alpha\beta}^{\eta'}x^{\underline{\eta}
R+\underline{\xi} R'-(\underline{\xi}'+\underline{\eta}')}
$$
The first assertion is proved. In order to prove the second
part, by Theorem \ref{associativity2}, we have
$$\int_{\delta,\delta_1,\delta_2,\iota'}g_{\delta_1\delta_2}^{\delta}\chi(\mathbb{P}\hom(V_{\xi'},I)_{V_{\delta}[1]\oplus
I'})x^{\underline{\delta}_2R+\underline{\delta}_1R'-\underline{\delta}+\mathrm{\underline{dim}}\mathrm{soc}I'}
$$
$$=\int_{\widetilde{\xi'},\delta_1,\delta_2,\iota'}g_{\widetilde{\xi'}\delta_2}^{\xi'}\chi(\mathbb{P}\hom(V_{\widetilde{\xi'}},I)_{V_{\delta_1}[1]\oplus
I'})x^{\underline{\delta_2}R+\underline{\widetilde{\xi'}}R'-\underline{\xi'}+\mathrm{\underline{dim}}\mathrm{soc}I}$$
$$
\hspace{-2cm}=\int_{\widetilde{\xi'},\delta_2}g_{\widetilde{\xi'}\delta_2}^{\xi'}\chi(\mathbb{P}\hom(V_{\widetilde{\xi'}},I))x^{\underline{\delta_2}R+\underline{\widetilde{\xi'}}R'-\underline{\xi'}+\mathrm{\underline{dim}}\mathrm{soc}I}
$$
and
$$\int_{\gamma,\gamma_1,\gamma_2,\rho'}g_{\gamma_1\gamma_2}^{\gamma}\chi(\mathbb{P}\hom(P,V_{\xi'})_{P'[1]\oplus
V_{\gamma}})x^{\underline{\gamma_2}R+\underline{\gamma_1}R'-\underline{\gamma}+\mathrm{\underline{dim}}P'/radP'}$$
$$=\int_{\widetilde{\xi'},\gamma_1,\gamma_2,\rho'}g_{\gamma_1\widetilde{\xi'}}^{\xi'}\chi(\mathbb{P}\hom(P,V_{\widetilde{\xi'}})_{P'[1]\oplus
V_{\gamma_2}})x^{\underline{\gamma_1}R+\underline{\widetilde{\xi'}}R'-\underline{\xi}'+\mathrm{\underline{dim}}P/radP}$$
$$
\hspace{-2cm}=\int_{\widetilde{\xi'},\gamma_1}g_{\widetilde{\xi'}\gamma_1}^{\xi'}\chi(\mathbb{P}\hom(P,V_{\widetilde{\xi'}}))x^{\underline{\gamma_1}R+\underline{\widetilde{\xi'}}R'-\underline{\xi}'+\mathrm{\underline{dim}}P/radP}.
$$
We note that $$\mathrm{\underline{dim}}\ \mathrm{soc}I=
\mathrm{\underline{dim}}\ P/radP$$ and
$$
\chi(\mathbb{P}\hom(P,M))=\chi(\mathbb{P}\hom(P,V_{\widetilde{\xi'}}))+\chi(\mathbb{P}\hom(V_{\widetilde{\xi'}},I)).
$$ The second assertion is proved.
\end{proof}

\subsection{}
We illustrate Theorem \ref{clustertheorem} by the following
example.

Let $Q$ be the Kronecker quiver $\xymatrix{1\ar @<2pt>[r]
\ar@<-2pt>[r]& 2}.$ Let $S_1$ and $S_2$ be the simple modules
associated to
vertices 1 and 2, respectively. Hence,
$$
R=\left(%
\begin{array}{cc}
  0 & 2 \\
  0 & 0 \\
\end{array}%
\right) \quad \mbox{and}\quad R'=\left(%
\begin{array}{cc}
  0 & 0 \\
  2 & 0 \\
\end{array}%
\right)
$$
and
$$
X_{S_1}=x^{\mathrm{\underline{dim}}S_1R'-\mathrm{\underline{dim}}S_1}+x^{\mathrm{\underline{dim}}S_1R-\mathrm{\underline{dim}}S_1}=x_1^{-1}(1+x_2^2),
$$
$$
X_{S_2}=x^{\mathrm{\underline{dim}}S_2R'-\mathrm{\underline{dim}}S_2}+x^{\mathrm{\underline{dim}}S_2R-\mathrm{\underline{dim}}S_2}=x_2^{-1}(1+x_1^2).
$$
For $\lambda\in \mathbb{P}^1(\bbc),$ let $u_{\lambda}$ be the
regular representation $\xymatrix{\bbc\ar @<2pt>[r]^{1}
\ar@<-2pt>[r]_{\lambda}& \bbc}.$ Then
$$
X_{u_{\lambda}}=x^{(1,1)R'-(1,1)}+x^{(1,1)R-(1,1)}+x^{(0,1)R+(1,0)R'-(1,1)}=x_1x_2^{-1}+x_1^{-1}x_2
+x_1^{-1}x_2^{-1}.$$ Let $I_1$ and $I_2$ be the indecomposable
injective modules corresponding vertices 1 and 2, respectively,
then
$$
X_{(I_1\oplus I_2)[-1]}:=x^{\mathrm{\underline{dim}soc}(I_1\oplus
I_2)}=x_1x_2
$$
The left side of the identity of Theorem \ref{clustertheorem} is
$$
\mathrm{dim}_{\bbc}\ext^{1}(S_1,S_2)X_{S_1}X_{S_2}=2(x_1^{-1}x_2^{-1}+x_1x_2^{-1}+x_1^{-1}x_2+x_1x_2).
$$
The first term of the right side is
$$
\int_{\lambda\in
\mathbb{P}^1(\bbc)}\chi(\mathbb{P}\ext^1(S_1,S_2)_{u_{\lambda}})X_{u_{\lambda}}=2(x_1^{-1}x_2^{-1}+x_1x_2^{-1}+x_1^{-1}x_2).
$$
To compute the second term of the right side, we note that for any
$f\neq 0\in \hom(S_2,\tau S_1),$ we have the following exact
sequence:
$$
0\rightarrow S_2\xrightarrow{f}\tau S_1\rightarrow I_1\oplus
I_2\rightarrow 0.
$$
This implies $\hom(S_2,\tau S_1)\setminus \{0\}=\hom(S_2,\tau
S_1)_{I_1\oplus I_2}$ Hence, the second term is equal to
$2x_1x_2.$

\end{document}